\documentclass[smallextended,envcountsect]{svjour3}
\usepackage{dsfont}
\usepackage{epsfig,color,graphicx,subfigure,multirow,lscape}
\usepackage[colorlinks,linkcolor=blue,citecolor=blue]{hyperref}
\usepackage{tikz,cite}
\usepackage{graphicx,dsfont,booktabs,threeparttable}
\usepackage{amsmath}
\usepackage{empheq}
\usepackage{amssymb}
\usepackage{appendix}
\usetikzlibrary{arrows, automata}
\textwidth=15.0cm \textheight=22.0cm \oddsidemargin=0.0cm
\topmargin=0.0cm
\smartqed

\usepackage{mathtools}
\usepackage[normalem]{ulem}
\usepackage{color}
\usepackage{enumerate}
\usepackage{geometry}

\journalname{}

\newtheorem{lem}{Lemma}[section]
\newtheorem{cor}{Corollary}[section]
\newtheorem{prop}{Proposition}[section]
\newtheorem{rem}{Remark}[section]

\newtheorem{algo}{Algorithm}[section]

\newtheorem{asmp}{Assumption}[section]

\def\be{\begin{eqnarray}}
\def\ee{\end{eqnarray}}
\def\ben{\begin{eqnarray*}}
\def\een{\end{eqnarray*}}
\def\ba{\begin{array}}
\def\ea{\end{array}}
\def\bi{\begin{itemize}}
\def\ei{\end{itemize}}

\newcommand{\bz}{\mathbf z}

\newcommand{\setto}{\rightrightarrows}

\def\cB{{\mathcal B}}

\def\cD{{\mathcal D}}

\def\cG{{\mathcal G}}
\def\cH{{\mathcal H}}

\def\cL{{\mathcal L}}

\def\cP{{\mathcal P}}

\def\bR{{\mathbb R}}

 \DeclareMathOperator*{\diag}{Diag}

\DeclareMathOperator{\Fix}{Fix}
\DeclareMathOperator{\Diag}{Diag}

\DeclareMathOperator{\gra}{gra}

\DeclareMathOperator{\zer}{zer}
\def\endproof{\hfill $\Box$ \vskip .5cm}
\def\prox{{\rm Prox}}

\def\[{\begin{equation}}
\def\]{\end{equation}}

\newcommand{\D}{\Delta}

\newcommand{\R}{\mathbb R}
\newcommand{\N}{\mathbb N}

\begin{document}

\title{A primal-dual splitting algorithm with convex combination and larger step sizes for  composite monotone inclusion problems}
\titlerunning{A PDSA with convex combination and larger step sizes for CMIPs}
\author{Xiaokai Chang$^{1}$ \and Junfeng Yang$^2$ \and  Jianchao Bai$^3$ \and Jianxiong Cao$^1$}

\institute{1 \ School of Science, Lanzhou University of Technology, Lanzhou, Gansu,  P. R. China (xkchang@lut.edu.cn, caojianxiong2007@126.com).
X. Chang was supported by the National Natural Science Foundation of China (12161053) and the Natural Science Foundation  for Distinguished Young Scholars of Gansu Province (22JR5RA223).
J. Cao was supported by the National Natural Science Foundation of China (12261058) and the National Defense Basic Scientific Research (JCKY2022427C001). \\
2 \ Corresponding author. School of Mathematics, Nanjing University, Nanjing, P. R. China (jfyang@nju.edu.cn). This author's work was supported by the National Natural Science Foundation of China (12431011, 12371301). \\
3 \ School of Mathematics and Statistics, Northwestern Polytechnical University, {Xi'an}, P. R. China (bjc1987@163.com). This author's work was supported by the National Natural Science Foundation of China (12471298) and the  Shaanxi Fundamental Science Research Project for Mathematics and Physics (23JSQ031).}

\date{Received: date / Accepted: date}

\maketitle

\begin{abstract}
The primal-dual splitting algorithm (PDSA) by Chambolle and Pock is efficient for solving structured convex optimization problems. It adopts an extrapolation step and achieves convergence under certain step size condition. Chang and Yang recently proposed a modified PDSA for bilinear saddle point problems, integrating a convex combination step to enable convergence with extended step sizes. In this paper, we focus on composite monotone inclusion problems (CMIPs), a generalization of convex optimization problems. While V\~{u} extended PDSA to CMIPs, whether the modified PDSA can be directly adapted to CMIPs remains an open question. This paper introduces a new PDSA for CMIPs, featuring the inclusion of both an extrapolation step and a convex combination step. The proposed algorithm is reformulated as a fixed-point iteration by leveraging an extended firmly nonexpansive operator. Under a significantly relaxed step size condition, both its convergence and sublinear convergence rate results are rigorously established. For structured convex optimization problem, we establish its sublinear convergence rate results measured by function value gap and constraint violations. Moreover, we show through a concrete example that our condition on the involved parameters cannot be relaxed. Numerical experiments on image denoising, inpainting, matrix games, and LASSO problems are conducted to compare the proposed algorithm with state-of-the-art counterparts, demonstrating the efficiency of the proposed algorithm.
\end{abstract}

\keywords{Primal-dual splitting algorithm \and composite monotone inclusion problem \and fixed-point iteration \and extrapolation \and convex combination}
\subclass{49M29 \and  65K10 \and 65Y20 \and 90C25 }

\section{Introduction}
\label{sec_introduction}
Let $A: \cH\setto\cH$ and $B: \cG\setto\cG$ be maximal monotone set-valued operators defined on
real Hilbert spaces $\cH$ and $\cG$ respectively, and let $K:\cH\rightarrow \cG$ be a nonzero bounded linear operator with its adjoint denoted by $K^*$.
In this paper, we focus on the composite monotone inclusion problem (CMIP) presented in the following form:
\be\label{inclusion}
\mbox{Find}~~ x \in \cH~~ \mbox{such that}~~ 0 \in A(x) + K^* B(K x).
\ee
The Attouch-Th\'{e}ra dual problem of \eqref{inclusion} as presented in \cite{Attouch96dual} is formulated as follows:
\be\label{inclusion-dual}
\mbox{Find}~~ y \in \cG ~~ \mbox{such that}~~ 0 \in -KA^{-1}(-K^*y) + B^{-1}(y).
\ee
Here, $A^{-1}$ stands for the inverse operator of $A$. Problem \eqref{inclusion} occurs naturally within the context of partial differential equations (PDEs), which are often derived from mechanical problems, as well as in differential inclusions, game theory, and a variety of other disciplines, see, e.g., \cite{SPLIT21Arias} and the references therein.
Among others, problem \eqref{inclusion} encompasses, as a particular instance,  the following composite convex optimization problem:
\be\label{two-block}
\min_{x\in\cH} ~ g(x)+f(K x).
\ee
Here, the functions $g :\cH\rightarrow\bR\cup \{+\infty\}$ and $f :\cG\rightarrow\bR\cup \{+\infty\}$ are extended-real-valued, closed, proper, and convex.
In fact, under certain regularity conditions, solving the composite convex optimization problem \eqref{two-block} is tantamount to solving its optimality condition, which can be cast as the inclusion problem (\ref{inclusion}) with $A=\partial g$ and $B=\partial f$. Here, $\partial$ denotes the subdifferential operator.
Let $f^*(y) = \sup_{u\in\cG}\{ \langle y, u\rangle - f(u)\}$, for $y\in \cG$, be the Legendre-Fenchel conjugate of $f$.  Then,  problem \eqref{two-block} can be reformulated as the following bilinear saddle point problem:
\be\label{saddle-point}
\min_{x\in \cH}\max_{y\in \cG} ~g(x)+\langle K x,y\rangle -f^*(y).
\ee
The dual problem of \eqref{two-block} is formulated as
$\max_{y\in\cG} -f^*(y) - g^*(-K^*y)$, which, under certain regularity condition, reduces to the following inclusion problem:
\be \label{dual-two-block-0form}
\mbox{Find}~~ y \in \cG ~~ \mbox{such that}~~
0\in - K \partial g^*(-K^*y) + \partial f^*(y).
\ee
Given the well known identities $\partial f^* = (\partial f)^{-1}$ and
$\partial g^* = (\partial g)^{-1}$, it can be readily observed that problem \eqref{dual-two-block-0form} represents a special instance of \eqref{inclusion-dual}.
Problems \eqref{two-block}-\eqref{dual-two-block-0form} have numerous applications in signal and image processing, traffic theory, optimal transport,  and so on, see, e.g., \cite{CP2016imaging,Chambolle2011A,Esser2010General,He2012Convergence} and the references therein.

\subsection{Related work}
The Douglas-Rachford splitting method (DRSM, \cite{Lions1979Splitting,Douglas-Rachford56}) was initially developed as a numerical approach for solving linear systems that arise during the discretization of PDEs.  It serves as a benchmark method for solving the inclusion problem \eqref{inclusion} when $K$ is the identity operator $I$. Under such circumstance, DRSM constructs a sequence $\{x_n\}_{n\in\N}$ through the recurrence relation
\ben
x_{n+1} = x_n - J_{\tau A} (x_n)+J_{\tau B} (2J_{\tau A} (x_n)-x_n).
\een
Here, $J_{\tau A}=(I+\tau A)^{-1}$ denotes the resolvent operator of $A$  and $\tau>0$ is a parameter.
The weak convergence of the shadow sequence $\{J_{\tau A} (x_n)\}_{n\in\N}$   was established in \cite{Svaiter11DR}.
In the case where $K \neq I$,   the maximal monotonicity of $K^\ast BK$ is required to ensure the weak convergence of the DRSM.
Additionally, computing the resolvent operator of $K^\ast BK$ is typically far more challenging than computing that of $B$.

Since problem \eqref{inclusion} with $K \neq I$ encompasses numerous {challenging} problems in optimization and real-world applications, it is highly desirable to devise full-splitting algorithms that simultaneously solve both \eqref{inclusion} and \eqref{inclusion-dual}.
An algorithm is referred to as full-splitting if it only requires to evaluate the resolvent operators of $A$ and $B$, forward computations of $K$ and its adjoint operator $K^*$, scalar multiplications and vector additions.
Many algorithms satisfying these conditions have been derived in the literature, see, e.g., \cite{splitting13Vu,Bot2013DR,improvement15Bot,Combettes11SKEW} and the references therein.
A well known primal-dual splitting algorithm (PDSA) is the one proposed by Chambolle and Pock in \cite{Chambolle2011A} for solving the composite convex optimization problem \eqref{two-block}, which is also commonly referred to as primal-dual hybrid gradient (PDHG) algorithm in the literature.
Later, the PDSA was extended by V\~{u} in \cite{splitting13Vu} to solve CMIPs \eqref{inclusion}, which adopts an extrapolation step
and generates a sequence $\{(x_n, y_n)\}_{n\in\N}$   via the recurrence
\begin{subequations}\label{PDSA}
\begin{align}
\bar{x}_{n + 1} &= J_{\tau A}(x_n - \tau K^*y_n), \label{PDSA-1}\\
z_{n + 1} &= \bar{x}_{n + 1} + (\bar{x}_{n + 1}-x_n), \label{PDSA-2} \\
\bar{y}_{n + 1} &= J_{\sigma B^{-1}}(y_n + \sigma K z_{n + 1}),\label{PDSA-3} \\
(x_{n+1},y_{n+1})&=(x_n,y_n)+\rho(\bar{x}_{n + 1}-x_n,  \bar{y}_{n + 1}-y_n).
\end{align}
\end{subequations}
Here, $\tau, \sigma >0$ are step sizes and $\rho\in(0,2)$ is a relaxation parameter.
Clearly, at each iteration, the PDSA algorithm given by \eqref{PDSA} requires evaluating the resolvent operators $J_{\tau A}$ and $J_{\sigma B^{-1}}$, the linear operator $K$ and its adjoint $K^*$ just once.
The convergence of the PDSA \eqref{PDSA} has been proven under the condition that the step sizes $\tau$ and $\sigma$ satisfy the inequality
$\tau \sigma \|K\|^2 \leq1$, where $\|K\|$ denotes the spectral norm of $K$.
In fact, the extrapolation step \eqref{PDSA-2} is crucial for the convergence of PDSA, see \cite{He22On}
for {divergent examples} without this extrapolation step.
In this paper, we refer to the iterative framework defined in \eqref{PDSA} as Chambolle-Pock's primal-dual hybrid gradient (CP-PDHG) algorithm for brevity.
Recently, Bo\c{t} \textit{et al.} introduced a PDSA with minimal lifting in \cite{PD2023Bot} to deal with multi-block monotone inclusion problems.

The technique of incorporating a convex combination step within an iterative algorithm was adopted by Malitsky in \cite{Malitsky2019Golden}, in which a golden ratio algorithm (GRA) was proposed for solving monotone variational inequality problems. This algorithm employs a convex combination step of the form:
\be
v_{n+1} = \frac{\psi-1}{\psi} x_{n} + \frac{1}{\psi}v_{n} \quad (1 < \psi \leq (1+\sqrt{5})/2).  \label{convex_com}
\ee
Recently, Chang and Yang, in \cite{ChY2020Golden,ChYZ2022GRPDAL}, introduced a golden ratio primal-dual algorithm (GRPDA) to solve the bilinear saddle point problem \eqref{saddle-point} and established its convergence results under the relaxed step size condition $\tau\sigma\|K\|^2 < \psi$.
In \cite{ChY2022relaxed}, the allowable range of $\psi$ was  further broadened, leading to more flexible parameter selection.
Below, we denote the algorithm in \cite{ChY2022relaxed} that uses $\psi$ from its newly expanded region as PDAc, given that it is a primal-dual type algorithm featuring a convex combination step.
Precisely because of the presence of the convex combination step, it remains uncertain how to derive the convergence results for GRA, GRPDA and PDAc from the  perspective of fixed-point iteration.

\subsection{Motivation and contributions}
The extrapolation step \eqref{PDSA-2} and the convex combination step \eqref{convex_com} prove to be highly effective when formulating PDSAs.
Specifically, these steps play a pivotal role in attaining convergence results and enabling the use of larger step sizes, as illustrated in \cite{Chambolle2011A,ChY2020Golden,ChYZ2022GRPDAL} for solving composite convex optimization problems.
However, when addressing the CMIP presented in \eqref{inclusion}, there remains uncertainty regarding the feasibility of incorporating a convex combination step in designing PDSAs. Specifically, it is unclear whether such a step can enable the use of larger step sizes while guaranteeing convergence from the fixed-point iteration perspective.
Inspired by this question and the promising performance of PDSAs with convex combination steps, in this paper, we put forward a new PDSA for solving the inclusion problem \eqref{inclusion}.
To be more specific, the algorithm we propose incorporates both extrapolation and convex combination steps and allows for more flexible step sizes.
The key idea of our proof lies in rephrasing the proposed algorithm as a fixed-point iteration of an extended firmly nonexpansive operator.
Our contributions are summarized as follows.

\bi
\item[(i)] We introduce a basic full-splitting iterative scheme, which encompasses well known splitting methods as particular instances, such as PDSA \cite{splitting13Vu}, CP-PDHG \cite{Chambolle2011A}, and PDAc \cite{ChY2022relaxed}. By making a specific selection of the extrapolation and convex combination steps within the proposed framework, we present a novel PDSA, whose convergence and sublinear convergence rate results are established from the perspective of fixed-point iteration.

\item[(ii)] Another contribution of this work is that the step sizes involved can be larger than those in  \cite{ChY2020Golden,ChYZ2022GRPDAL,ChY2022relaxed}. To be more precise, the allowable range of $\tau\sigma\|K\|^2$ can extend up to $(0, 4)$ when the relaxation and convex combination parameters are selected appropriately. According to our numerical findings, the algorithm consistently benefits from moderate over-relaxation when combined with a relatively small convex combination parameter. This strategic balance yields considerable improvements in numerical performance.
    Besides, after leveraging the monotonicity of $A$ and $B$, the reasoning process involves exclusively identical reformulations without any relaxations. Consequently, the derived upper bound for the self-adjoint positive definite operator $P$, which is crucial for ensuring the convergence of the fixed-point iteration of the operator $T_P$, is sharp, see Remark \ref{rem-tight} (i). A concrete example is also given in Section \ref{sec:he} to show the sharpness of the condition on $P$.

\item[(iii)]  We propose an empirical approach for the dynamic adjustment of the relaxation and convex combination parameters to enhance the performance of the proposed algorithm. This heuristic proves to be highly effective for the class of bilinear saddle point problems.
\ei

\subsection{Organization}
The rest of this paper is organized as follows. In Section \ref{sec:Preliminarries}, we present some useful facts and notation.
Then we introduce the basic scheme of our PDSA and show the connection with some classical splitting algorithms in Section \ref{sec:Resolvent splitting}.
A new PDSA with both extrapolation and convex combination steps is also derived in this section by taking a specific choice in the basic scheme.
Section \ref{sec:Convergence analysis} is devoted to the convergence analysis of the new PDSA from the perspective of fixed-point iteration, and $o(1/N)$ ergodic convergence rate is obtained. Moreover, we establish $\mathcal{O}(1/N)$ ergodic sublinear convergence rate results for the composite convex optimization problem \eqref{two-block}, measured by the function value gap and constraint violations.
In Section \ref{sec:discussion}, some further issues are discussed for the proposed algorithm.
In Section \ref{sec-experiments}, we  illustrate the efficacy of the proposed algorithm by benchmarking it against several state-of-the-art algorithms
in  solving image denoising, inpainting, matrix-game, and LASSO problems.
Finally,  concluding remarks are provided in Section \ref {sec:conclusion}, along with discussions on some future research directions.

\section{Preliminaries}
\label{sec:Preliminarries}
Denote the inner-product  and the induced norm on a real Hilbert space by $\langle\cdot,\cdot\rangle$ and $\|\cdot\|$, respectively. A set-valued operator $A\colon\cH\setto\cH$ is said to be monotone if
 $$ \langle x-y,u-v\rangle\geq 0\qquad\forall (x,u),(y,v)\in\gra A:=\bigr\{(x,u)\in \cH\times\cH~|~u\in A(x)\bigl\}. $$
A monotone operator is maximally monotone if no proper extension of it is monotone.
The inverse of $A$, denoted by $A^{-1}$, is defined through its graph $\gra A^{-1} =\{ (u,x) \in \cH\times\cH ~|~(x,u) \in \gra A\}$.
As used before, the identity operator is denoted by $I$. Whenever necessary, the underlying space of $I$ will be denoted by a subscript.
The resolvent operator of $A\colon\cH\setto\cH$ is defined by $J_A:=(I+A)^{-1}$.
When $A$ is maximally monotone, its resolvent operator $J_A$ is single-valued, everywhere defined, and firmly nonexpansive \cite{Bauschke2011book}.
For any $\tau>0$, we have the following Moreau decomposition
\be\label{Moreau}
J_{\tau A} + \tau J_{\tau^{-1} A^{-1}}\circ \tau^{-1} I = I.
\ee
In particular, $J_{A^{-1}} = I-J_A$.
The set of zeros of $A$ is given by $\zer A = \{x\in\cH~|~0\in Ax\}$.
The set of fixed points of a single-valued operator $T : \cH \rightarrow \cH $ is denoted by $\Fix T :=\{x\in\cH ~|~ Tx = x\}$.

Let $K:\cH\rightarrow \cG$ be a bounded linear operator. Throughout this paper, we denote the operator norm of $K$ by $L$,   i.e., $L := \|K\| = \|K^*\| = \sup \{\langle K x, y\rangle~|~ \|x\|=\|y\|=1,\, x\in \cH,\, y\in \cG\}$, and use the two notation interchangeably.
The set of all bounded linear operators defined on $\cH$ is denoted by $\cB(\cH)$.
Let $M\in \cB(\cH)$ be self-adjoint and positive definite (denoted as $M\succ 0$), i.e., $\langle x,My\rangle=\langle Mx,y\rangle$ for any $x,y\in \cH$
and $\langle x,Mx\rangle >0$ for any nonzero $x \in \cH$. Then, we let
 $\langle x,y\rangle_M := \langle x, My\rangle$ for any $x,y\in\cH$ and
$\|x\|_M := \sqrt{\langle x, x\rangle_M}$.
When the operator $M$ is self-adjoint yet not positive definite, we continue to employ the notation $\|y\|_{M}^2 := \langle y, My\rangle$. In such a scenario, the value of $\|y\|_{M}^2$ is not guaranteed to be nonnegative.
For any $x, y, z\in \cH$, there hold
\be
2\langle x-y, x-z\rangle_M&=&  \|x-y\|_M^2 +  \|x-z\|_M^2  -\|y-z\|_M^2,\label{id}\\
\|M x+(I-M)y\|^2&=&  \|x\|_M^2+\|y\|_{I-M}^2-\|x-y\|_{M(I-M)}^2.\label{id2}
\ee

Denote the set of solutions of the inclusion problem \eqref{inclusion} by $\cP$  and that of its Attouch-Th\'{e}ra dual problem \eqref{inclusion-dual} by $\cD$.
Denote the set of primal-dual solutions of \eqref{inclusion}-\eqref{inclusion-dual} by
\be\label{cond-solver}
\Omega:=\{(x^\star,y^\star)\in\cH\times \cG~|~-K^*y^\star\in A(x^\star)~\mbox{and}~y^\star\in B(Kx^\star)\}.
\ee
Indeed,  it is well known from \cite{Bauschke2011book} that $\Omega$ is a subset of $\cP\times\cD$, and  $\cP\neq\emptyset$ if and only if $\cD\neq\emptyset$.
Throughout the paper, we make the following blanket assumptions.
\begin{asmp}\label{asmp-0}
The resolvent operators of $A$ and $B$  are easy to evaluate.
\end{asmp}
\begin{asmp}\label{asmp-1}
The set $\Omega$ defined in \eqref{cond-solver} is nonempty. Hence, both $\cP$ and $\cD$ are nonempty.
\end{asmp}

Below, we introduce a notion called extended averaged operator. Later, we will show that the proposed algorithm can be expressed as a fixed-point iteration of a certain operator $T_P$ (see Lemma \ref{lem-a}); under specific circumstances, $T_P$ is $P$-averaged (see Remark \ref{rem-P} (iii)).
\begin{definition}[Extended averaged operator]
Let $M\in \cB(\cH)$ be self-adjoint and positive definite and $D$ be a nonempty subset of $\cH$.
An operator $T: D \rightarrow \cH$  is called extended averaged with operator $M$, or $M$-averaged, if there exists a nonexpansive operator $R: D \rightarrow \cH$ such that $T = (I - M) + MR$.
\end{definition}

\begin{lem}[Characterization of extended averaged operator]\label{lem-ne}
Let $D$ be a nonempty subset of $\cH$, $T : D \rightarrow \cH$ be a single-valued operator, and  $M\in \cB(\cH)$ be self-adjoint and positive definite.
$T$ is $M$-averaged if and only if one of the following conditions holds:
\bi
\item[(i)] $(I-M^{-1}) + M^{-1}T$ is nonexpansive.
\item[(ii)] For any {$x, y\in D$, it holds}
\ben
\|Tx-Ty\|_{M^{-1}}^2 \leq \|x-y\|_{M^{-1}}^2-
\|(I-T)x-(I-T)y\|_{M^{-1}(I-M)M^{-1}}^2.
\een
\ei
\end{lem}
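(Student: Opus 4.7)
The plan is to establish the chain of equivalences ``$M$-averaged $\Longleftrightarrow$ (i) $\Longleftrightarrow$ (ii)''; both reduce to direct algebra, with the only subtle point being a single sign identity relating $M^{-1}(I-M^{-1})$ and $M^{-1}(I-M)M^{-1}$.

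For ``$M$-averaged $\Longleftrightarrow$ (i)'', the argument is essentially to solve for $R$ in the defining equation. Since $M \succ 0$ is invertible, $T = (I-M)+MR$ is equivalent to $R = M^{-1}(T-(I-M)) = (I-M^{-1})+M^{-1}T$. Thus $T$ is $M$-averaged precisely when this particular $R$ is nonexpansive, which is condition (i); the reverse direction is recovered by plugging this $R$ back into $(I-M)+MR$ and simplifying to $T$.

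The main work is (i) $\Longleftrightarrow$ (ii). Setting $u := Tx-Ty$ and $v := x-y$, I would write $Rx-Ry = M^{-1}u + (I-M^{-1})v$ and apply identity \eqref{id2} with $M^{-1}$ in place of $M$ (legitimate because $M^{-1}$ is self-adjoint) to get
\[
\|Rx-Ry\|^2 \;=\; \|u\|_{M^{-1}}^2 + \|v\|_{I-M^{-1}}^2 - \|u-v\|_{M^{-1}(I-M^{-1})}^2.
\]
Using $\|v\|^2 - \|v\|_{I-M^{-1}}^2 = \|v\|_{M^{-1}}^2$, the nonexpansiveness condition $\|Rx-Ry\|^2 \leq \|v\|^2$ rearranges into
\[
\|u\|_{M^{-1}}^2 \;\leq\; \|v\|_{M^{-1}}^2 + \|u-v\|_{M^{-1}(I-M^{-1})}^2.
\]
The single non-mechanical step is then the sign identity $M^{-1}(I-M^{-1}) = M^{-1}-M^{-2} = -M^{-1}(I-M)M^{-1}$, which uses that $M$ and $M^{-1}$ commute. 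Combined with $u-v = -\bigl((I-T)x-(I-T)y\bigr)$ and the self-adjointness of the weighting operator, this flip converts the displayed inequality into exactly (ii). All steps are reversible, so (i) and (ii) are equivalent. I do not expect any real obstacle beyond correctly tracking signs and verifying that \eqref{id2}, originally stated for $M$, applies equally well to the self-adjoint operator $M^{-1}$ (the associated weighting operators on the right may be indefinite, but the identity remains purely algebraic and holds regardless).
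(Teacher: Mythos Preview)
Your proposal is correct and follows essentially the same route as the paper: solve $T=(I-M)+MR$ for $R$ to get (i), then expand $\|Rx-Ry\|^2$ via identity \eqref{id2} with $M^{-1}$ in place of $M$ and rearrange using $\|v\|_{I-M^{-1}}^2=\|v\|^2-\|v\|_{M^{-1}}^2$. You are in fact more explicit than the paper about the sign identity $M^{-1}(I-M^{-1})=-M^{-1}(I-M)M^{-1}$ needed to match the weighting operator in (ii), which the paper leaves implicit.
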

\proof
Note that, by definition, $T$ is $M$-averaged if and only if $T = (I - M) + MR$ with $R := (I-M^{-1}) + M^{-1}T$ being nonexpansive. Thus $T$ is $M$-averaged if and only if (i) holds. On the other hand, for any   $x,y\in D$,  it follows from the definition of $R$ and \eqref{id2} that
\begin{align}
    \|Rx - Ry\|^2
&= \|(I-M^{-1})(x-y) + M^{-1}(Tx - Ty)\|^2 \nonumber \\
&= \|x - y\|_{I- M^{-1}}^2 +\|Tx - Ty\|_{M^{-1}}^2 -\|(I -T)x-(I- T)y\|_{M^{-1}(I-M^{-1})}^2. \label{prop1}
\end{align}
Then, the equivalence between (i) and (ii) follows from the definition of $R$, the equality \eqref{prop1}, and
the relation $\|x - y\|_{I- M^{-1}}^2 = \|x - y\|^2 -  \|x-y\|_{M^{-1}}^2$.
\endproof

\begin{rem}
Let $T$ be an $M$-averaged operator with $M\succ 0$. If $M \prec I$, then $I-M\succ 0$, and it follows from Lemma \ref{lem-ne} (ii) that $T$ is nonexpansive with respect to the $M^{-1}$-norm.
However, if the condition $M \prec I$ does not hold, the nonexpansiveness of $T$ cannot be guaranteed, even though $T$ is $M$-averaged with $M\succ 0$.
\end{rem}

In this paper, we adopt the notation $\mathcal{O}(1/n)$ and $o(1/n)$ to characterize convergence rates. A nonnegative sequence $\{a_n\}$ is of order $\mathcal{O}(1/n)$ if there exists a constant $C > 0$ such that $a_n \leq C/n$ for all $n\geq 1$, while $a_n = o(1/n)$ signifies that $\lim_{n\rightarrow\infty} na_n = 0$. Next, we recall an important lemma which is useful for refining the ${\cal O}(1/n)$ rate to a faster rate $o(1/n)$.
\begin{lem}[\hspace{-0.01cm}{\cite[Lemma 1.1]{DLPYin17}}]\label{lem-con-rate}
Let $\{a_n\}_{n\in\N} \subseteq \R$ be a nonnegative sequence, monotonically nonincreasing and summable, i.e., $\sum_{n=1}^\infty a_n<+\infty$.
Then, we have $a_n= o(1/n)$ as $n\rightarrow\infty$.
\end{lem}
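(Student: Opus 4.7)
The plan is to bound $na_n$ by a tail-of-series quantity, using both monotonicity (to make each tail term at least as large as $a_n$) and summability (to force the tail to vanish). This is a short, standard argument; the only care needed is choosing the right length of tail so that the resulting lower bound on $a_n$ has the right $1/n$ scale.

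First I would set $S_n := \sum_{k=1}^n a_k$ and observe that since $\sum_{k=1}^\infty a_k < +\infty$, the sequence $\{S_n\}$ is Cauchy, so $S_n - S_{\lfloor n/2\rfloor} \to 0$ as $n\to\infty$. This quantity equals the tail $\sum_{k=\lfloor n/2\rfloor+1}^{n} a_k$, which contains roughly $n/2$ terms.

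Next I would invoke the monotonicity of $\{a_n\}$: since $a_k \geq a_n$ for every $k\leq n$, every term in the above tail is at least $a_n$, hence
\[
S_n - S_{\lfloor n/2\rfloor} \;=\; \sum_{k=\lfloor n/2\rfloor+1}^{n} a_k \;\geq\; \bigl(n-\lfloor n/2\rfloor\bigr)\, a_n \;\geq\; \tfrac{n}{2}\, a_n \geq 0.
\]
Combining this with the vanishing of the tail gives $\tfrac{n}{2}a_n \to 0$, i.e., $na_n \to 0$, which is exactly the statement $a_n = o(1/n)$.

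I do not anticipate any real obstacle: the only subtlety is that splitting the index at $\lfloor n/2\rfloor$ is essential, since any fixed truncation point would merely give $a_n = o(1)$, and any sub-logarithmic growth of the truncation point would not give a linear factor $n$. A geometric split, which preserves a constant fraction of the indices, is exactly what matches the claimed $1/n$ rate.
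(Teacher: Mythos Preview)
Your argument is correct and is the standard proof of this classical fact. Note that the paper does not actually supply a proof of this lemma; it simply quotes the result from \cite[Lemma~1.1]{DLPYin17}, so there is no in-paper proof to compare against. Your tail-splitting at $\lfloor n/2\rfloor$ together with monotonicity and the Cauchy property of the partial sums is exactly the expected route, and the inequality $n-\lfloor n/2\rfloor\geq n/2$ is handled correctly.
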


\section{A new PDSA and its properties}
\label{sec:Resolvent splitting}
In this section, we first introduce a basic PDSA scheme for solving the CMIP shown in \eqref{inclusion}.
Then, we present a new PDSA within this scheme. Finally, we figure out some of its properties.
The new PDSA contains both an extrapolation step like \eqref{PDSA-2} and a convex combination step like  \eqref{convex_com}.

\subsection{A basic PDSA scheme}
{Given    iterates} $\{(x_k,y_k,v_k)\}_{k\leq n}$ and a relaxation parameter $\eta>0$, we introduce the following basic PDSA scheme:
\be\label{bs}
\left\{\ba{l}
\text{compute~}  v_{n+1} \text{~from~} \{(x_k,v_k)\}_{k\leq n}, \smallskip \\
x_{n+1}=J_{\tau A}(v_{n+1}-\tau K^*y_{n}), \smallskip \\
\text{compute~}  (z_{n+1}, \omega_{n+1}) \text{~from~} \{x_k\}_{k\leq n+1}, \smallskip  \\
y_{n+1}=y_{n}+\eta\sigma \big( Kz_{n+1} -J_{B/\sigma}( y_{n}/\sigma+K \omega_{n+1}) \big),
\ea\right.
\ee
where $\tau>0$ and $\sigma>0$ are step sizes, $v_{n+1}$ and $(z_{n+1}, \omega_{n+1})$ are computed using only vector addition and scalar multiplication of the underlying sequences.
It is clear from \eqref{bs} that the resolvent operators $J_{\tau A}$ and $J_{B/\sigma}$ need to be computed only once per iteration.

Through an appropriate choice of $(v_{n+1}, z_{n+1}, \omega_{n+1})$ and $\eta$, the basic scheme \eqref{bs}
is capable of encompassing many well known operator splitting methods for solving \eqref{inclusion} or its special case \eqref{two-block}.
For instance, by choosing $(v_{n+1}, z_{n+1}, \omega_{n+1})=(x_{n}, 2x_{n+1}-x_{n}, 2x_{n+1}-x_{n})$ and $\eta=1$, the basic scheme \eqref{bs} simplifies to the
PDSA given in \eqref{PDSA}. For this choice, when confined to the composite convex optimization problem \eqref{two-block}, it is equivalent to CP-PDHG  proposed in \cite{Chambolle2011A}.
This equivalence can be discerned by leveraging Moreau's decomposition \eqref{Moreau} to deduce
\ben
{y_{n+1}}
=y_{n}+\sigma \left[Kz_{n+1} -J_{B/\sigma}( y_{n}/\sigma+K z_{n+1})\right]
=J_{\sigma B^{-1}}(y_n + \sigma K z_{n+1}).
\een
By choosing $v_{n+1}=\theta x_{n}+(1-\theta)v_{n}$ with $\theta=(\psi-1)/\psi$, i.e.,
the convex combination step \eqref{convex_com} is adopted, and setting  $z_{n+1}=\omega_{n+1} =x_{n+1}$ and  $\eta=1$,
the basic scheme \eqref{bs} simplifies to the PDAc proposed in \cite{ChY2022relaxed}. Specifically, the iterative scheme of PDAc for solving \eqref{two-block} is given by
\ben
\left\{\ba{ll}
v_{n+1}&=\theta x_{n}+(1-\theta)v_{n},\\
x_{n+1}&=\prox_{\tau g}(v_{n+1}-\tau K^*y_{n}),\\
y_{n+1}&=y_{n}+\sigma \big( Kx_{n+1} -\prox_{f/\sigma}( y_{n}/\sigma+K x_{n+1})\big),
\ea\right.
\een
where $\prox_g := J_{\partial g}$ denotes the proximity operator of $g$.
An advantage of PDAc lies in its ability to take larger step sizes, see \cite[Theorem 2.1]{ChY2022relaxed} for details.
However, because of the convex combination step, the derivation of the convergence of PDAc from the perspective of fixed-point iteration remains ambiguous.
The same difficulty has also been pointed out in \cite{Malitsky2019Golden} for the golden ratio algorithm.
Furthermore, it is uncertain whether PDAc can be directly extended to solve the CMIP problem \eqref{inclusion}.

In this paper, we propose the following particular choice of $(v_{n+1}, z_{n+1}, \omega_{n+1})$ in \eqref{bs}:
$v_{n+1} =\theta x_{n}+(1-\theta)v_{n}$,
$z_{n+1} =x_{n+1}+ (\theta/\eta)(x_{n+1}-v_{n+1})$ and
$\omega_{n+1}=x_{n+1}$,
where the involved parameters will be clarified below.
For convenience,  we define
\be\label{def:parameter-set}
\gamma := \tau\sigma \text{~and~}
\Theta := \big\{(\theta,\eta, \gamma)~|~ \theta \in (0,2), \, \eta  \in (0,2), \, \gamma\in (0,\infty), \, \gamma \|K\|^2<(2-\theta)(2-\eta)\big\}.
\ee
The detailed algorithmic framework of the basic scheme \eqref{bs} with the above mentioned choice of $(v_{n+1}, z_{n+1}, \omega_{n+1})$ is summarized below in Algorithm \ref{alg-os}. We still refer to the resulting algorithm as PDSA with convex combination.

\vskip5mm
\hrule\vskip2mm
\begin{algo}
[PDSA with convex combination]\label{alg-os}
{~}\vskip 1pt {\rm
\begin{description}
\item[{\em Step 0.}] Choose $(\theta,\eta,\gamma)\in\Theta$, $\tau>0$, and set $\sigma = \gamma/\tau$. Initialize  $x_0= v_{0}\in \cH$, $y_0\in \cG$ and $n=0$.
\item[{\em Step 1.}] Compute $v_{n+1}$, $x_{n+1}$, $z_{n+1}$ and $y_{n+1}$ sequentially as follows:
\begin{subequations}\label{os}
    \begin{align}
v_{n+1} &=\theta x_{n}+(1-\theta)v_{n}, \label{os-z}\\
x_{n+1}&=J_{\tau A}(v_{n+1}-\tau K^* y_{n}), \label{os-x}\\
z_{n+1} &=x_{n+1}+ \theta(x_{n+1}-v_{n+1})/\eta, \label{os-extra}\\
y_{n+1}&=y_{n}+\eta \sigma \big(Kz_{n+1} -J_{B/\sigma}( y_{n}/\sigma+Kx_{n+1})\big).\label{os-y}
\end{align}
\end{subequations}
\item[{\em Step 2.}] Set $n\leftarrow n + 1$ and return to Step 1.
  \end{description}
}
\end{algo}
\vskip1mm\hrule\vskip5mm

It is evident that the expression in \eqref{os-z} bears resemblance to that in \eqref{convex_com} and represents a convex combination step.
Besides,  it is easy to derive by induction that $v_{n+1}$ is a convex combination of $\{x_i\}_{i\leq n}$, and thus the update in \eqref{os-extra} can also be viewed as an extrapolation step in the direction $(x_{n+1}-v_{n+1})$, instead of $(x_{n + 1}-x_n)$ as in \eqref{PDSA-2}. By the  Moreau decomposition formula \eqref{Moreau}, $y_{n+1}$ defined in \eqref{os-y} can be rewritten as
\be\label{os-y1}
y_{n+1}=y_{n}+\eta \left(J_{\sigma B^{-1}}( y_{n}+\sigma Kx_{n+1})+\sigma K(z_{n+1}-x_{n+1}) - y_n\right).
\ee
This expression highlights $\eta$ as a relaxation parameter. Notably, when $\theta = \eta = 1$ in \eqref{os} and $\rho=1$ in \eqref{PDSA},  $z_{n+1}$ defined in both frameworks appears to be the same.
However, direct verification shows that when $\theta = \eta = \rho = 1$, the only difference between \eqref{os} and \eqref{PDSA} is the extra term $\sigma K(z_{n+1} - x_{n+1})$ in \eqref{os-y1}.
Since $z_{n+1} \neq x_{n+1}$, our algorithm does not reduces to \eqref{PDSA}.

As can be seen from \eqref{def:parameter-set}, the range of $\tau\sigma\|K\|^2$ can be as wide as $(0,4)$.
In practical implementations, balancing the parameters $(\theta, \eta, \gamma)$ is crucial for achieving optimal performance. As evidenced by the numerical results in Section \ref{sec6.2}, favorable outcomes consistently arise from using a moderate degree of over-relaxation (i.e., $\eta > 1$ but not excessively large) in conjunction with a relatively small convex combination parameter $\theta$.

\begin{rem}
We emphasize that Algorithm \ref{alg-os} has the same major computational load as PDSA given in \eqref{PDSA}. Each iteration in both algorithms involves only one evaluation of the resolvent operators $J_A$ and $J_B$ (or $J_{B^{-1}}$, considering \eqref{Moreau}), and one evaluation of the linear operator $K$ and its adjoint $K^*$.
From \eqref{os-extra} and \eqref{os-z}, we  observe that $Kz_{n + 1}=Kx_{n+1}+  \theta (Kx_{n+1}-Kv_{n+1})/\eta$ and $Kv_{n+1}=\theta Kx_{n}+(1 - \theta)Kv_{n}$. As a result, in each iteration, we only need to evaluate the linear operator $K$ on $x_{n+1}$ once. Subsequently, $Kz_{n+1}$ and $Kv_{n+1}$ can be recursively computed without additional evaluation of $K$, requiring only vector addition and scalar multiplication operations.
Moreover, during the computation process, we only require the value of $Kz_{n+1}$, rather than the value of $z_{n+1}$ itself.
\end{rem}

\subsection{Properties of Algorithm  \ref{alg-os}   from fixed-point iteration perspective}
In this section, we analyze some properties of the sequence generated by Algorithm \ref{alg-os} from the fixed-point iteration perspective.
Let  $P\in \cB(\cH\times\cG)$ be a self-adjoint and positive definite operator with a $2\times 2$ block structure, where the $(1,1)$-block of $P$ has the same dimension as $\cH$ and the $(2,2)$-block has the same dimension as $\cG$.
Define operator $T_P: \cH\times\cG \rightarrow \cH\times\cG$ in a sequential manner as
\begin{subequations}\label{def:T}
\begin{align}
\bz&=(v,u)^\top \in \cH\times\cG, \label{def:zzz}\\
x&=J_{\tau A}(v - \tau K^*(\sigma Kv-u/\tau)), \label{def:wa} \\
w&=J_{B/\sigma} \left(Kv+Kx-u/\gamma\right),  \label{def:wb} \\
T_P (\bz) &:=\bz+ P \begin{pmatrix} \ba{c}x-v\\w-K x\ea\end{pmatrix}. \label{def:T3}
\end{align}
\end{subequations}
For the sequence $\{(x_n,v_n, y_n)\}_{n\in\N}$ generated by Algorithm \ref{alg-os}, in what follows, for $n\geq 1$, we let  $u_{n} := \gamma K v_{n}-\tau  y_{n-1}$ and $\bz_n:=(v_n,u_n)^\top$.

\begin{lem}\label{lem-a}
Let  $\{(x_n,v_n, y_n)\}_{n\in\N}$ be the sequence generated by Algorithm \ref{alg-os}.
 Then, we have
$\bz_{n+1} = T_P (\bz_n)$ for $n\geq 1$, where $T_P$ is defined  in \eqref{def:T} with  $P=\Diag(\theta I_{\cH},\eta\gamma I_{\cG})\succ0$.
\end{lem}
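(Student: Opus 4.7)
The plan is to verify the identity $\bz_{n+1} = T_P(\bz_n)$ componentwise by direct substitution. Since $P = \Diag(\theta I_\cH, \eta\gamma I_\cG)$, the definition \eqref{def:T3} reduces to the two scalar-like updates
\[
v_{n+1} \overset{?}{=} v_n + \theta(x-v_n), \qquad u_{n+1} \overset{?}{=} u_n + \eta\gamma(w-Kx),
\]
where $x$ and $w$ are produced by \eqref{def:wa}--\eqref{def:wb} starting from $\bz_n=(v_n,u_n)^\top$. So I will first show that these internal quantities coincide with the iterates $x_n$ and (a shifted version of) the resolvent call in \eqref{os-y}, and then conclude.

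The first step is to simplify the argument of $J_{\tau A}$ in \eqref{def:wa} when evaluated at $\bz_n$. Using $u_n=\gamma K v_n-\tau y_{n-1}$ and the relation $\gamma=\tau\sigma$, a quick calculation gives $\sigma K v_n - u_n/\tau = y_{n-1}$, so the inner expression collapses to $v_n-\tau K^* y_{n-1}$. Comparing with \eqref{os-x}, this yields $x = x_n$. The first component of $T_P(\bz_n)$ is then $v_n+\theta(x_n-v_n)=(1-\theta)v_n+\theta x_n$, which matches $v_{n+1}$ by \eqref{os-z}.

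The second step is to simplify the argument of $J_{B/\sigma}$ in \eqref{def:wb}. Substituting $u_n=\gamma K v_n-\tau y_{n-1}$ gives
\[
Kv_n+Kx_n-u_n/\gamma = Kx_n + y_{n-1}/\sigma,
\]
so $w=J_{B/\sigma}(y_{n-1}/\sigma+Kx_n)$. Comparing with the update \eqref{os-y} (with index shifted by one), we can solve for this resolvent to obtain $w = K z_n - (y_n-y_{n-1})/(\eta\sigma)$. Hence
\[
\eta\gamma(w-Kx_n) = \eta\gamma(Kz_n-Kx_n) - \tau(y_n-y_{n-1}).
\]
The extrapolation rule \eqref{os-extra} gives $\eta(Kz_n-Kx_n) = \theta(Kx_n-Kv_n)$, which I will plug in.

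The final step is to assemble. Combining the above with $u_n=\gamma K v_n-\tau y_{n-1}$, the second component of $T_P(\bz_n)$ becomes
\[
\gamma(1-\theta)Kv_n+\gamma\theta Kx_n-\tau y_n = \gamma K v_{n+1}-\tau y_n = u_{n+1},
\]
using \eqref{os-z} in the middle equality. This matches $u_{n+1}$ by definition. Positive definiteness of $P$ is immediate from $\theta,\eta,\gamma>0$ under the parameter set $\Theta$. The only subtlety is the index bookkeeping: the $x$ produced inside one application of $T_P(\bz_n)$ is the already-computed $x_n$ rather than $x_{n+1}$, because $u_n$ encodes $y_{n-1}$. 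Once this is noted, the rest is routine algebra with $\gamma=\tau\sigma$ and the Moreau-like shift that turns $\sigma Kv-u/\tau$ into $y_{n-1}$.
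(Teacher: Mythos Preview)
Your proof is correct and follows essentially the same approach as the paper: componentwise verification via the key identity $\sigma K v_n - u_n/\tau = y_{n-1}$ (equivalently $u_n/\gamma = Kv_n - y_{n-1}/\sigma$), which collapses the resolvent arguments in \eqref{def:wa}--\eqref{def:wb} to those appearing in \eqref{os-x} and \eqref{os-y} at index $n$. The only cosmetic difference is that for the second component the paper starts from $u_{n+1}=\gamma K v_{n+1}-\tau y_n$ and expands forward using \eqref{os-z}--\eqref{os-y} to reach $u_n+\eta\gamma(w-Kx_n)$, whereas you start from $u_n+\eta\gamma(w-Kx_n)$ and work toward $u_{n+1}$; the algebra is identical.
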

\proof
Recall that $\gamma = \tau\sigma$. Since  $u_{n}= \gamma K v_{n}-\tau  y_{n-1}$, it follows from \eqref{os-x} and \eqref{os-z} that
\be\label{x_n_op}
x_n=J_{\tau A}\big(v_n - \tau K^*(\sigma Kv_n-u_{n}/\tau)\big),
\ee
and thus
\be\label{z-iter}
v_{n+1} = v_{n}+\theta (x_n - v_n) = v_{n}+\theta\big[ J_{\tau A}\big(v_n - \tau K^*(\sigma Kv_n-u_{n}/\tau)\big) -v_{n}  \big].
\ee
Then, it is elementary to derive from the definition of $u_{n+1}$, \eqref{z-iter}, \eqref{os-extra} and \eqref{os-y} that
\be\label{os-u}
u_{n+1}&=&\gamma Kv_{n+1}-\tau y_{n}\nonumber\\
&=&\gamma  K \big(v_{n}+\theta (x_n - v_n)\big)  -\tau \left[
y_{n-1} + \eta \sigma  K \Big(x_n +    {\theta\over \eta} (x_n-v_{n}) \Big) - \eta \sigma J_{B/\sigma}\Big({y_{n-1}\over \sigma} +Kx_{n}\Big)
\right]
\nonumber\\
&=&u_{n}+\eta\gamma \left(J_{B/\sigma} \left(K (v_{n}+ x_n)-u_n/\gamma\right)-Kx_n\right),
\ee
where the last equality follows from combining like terms and using $y_{n-1}/\sigma  = K v_{n} -u_n/\gamma$.
The proof is completed by combining \eqref{x_n_op}-\eqref{os-u}.
\endproof

\begin{rem}
Operators analogous to $T_P$  have been implicitly explored in \cite{Automated2025Upadhyaya}, where an automated tight Lyapunov analysis for a class of first-order optimization algorithms is conducted. In this study, $T_P$ is utilized to analyze a splitting method from the perspective of generalized firmly nonexpansive operators, a viewpoint that, to the best of our knowledge, has not been considered in previous literature.
\end{rem}

Next, we present a proposition regarding the operator $T_P$ defined  in \eqref{def:T}, which plays a crucial role in the convergence analysis of Algorithm \ref{alg-os}.

\begin{prop}\label{l:T_A ne}
Let $P\in \cB(\cH\times\cG)$ be a self-adjoint positive definite operator. Suppose that $P$ has a $2\times2$ block structure, and satisfies $0\prec P\prec \Phi_K$, where $\Phi_K$ is defined as
\be\label{def:Phi}
\Phi_K:=\left[\ba{cc}2I~&~ \gamma K^*\\  \gamma K~&~2\gamma I\ea\right].
\ee
For any $\bz=(v,u)^\top\in\cH\times \cG$ and $\bar{\bz}=(\bar{v},\bar{u})^\top\in\cH\times\cG$,
we have
\be\label{eq:T_A_ne}
 \|T_P(\bz)-T_P(\bar{\bz})\|_M^2
  \leq \|\bz-\bar{\bz}\|_M^2- \|(I-T_P)(\bz)-(I-T_P)(\bar{\bz})\|_Q^2,
\ee
where  $M :=P^{-1}$ and
\be\label{def:Q}
Q:=M^*\left(\Phi_K-P\right)M\succ 0.
\ee
\end{prop}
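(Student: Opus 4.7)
The strategy is to rewrite the target inequality \eqref{eq:T_A_ne} as a single algebraic identity that follows directly from the monotonicity of $A$ and $B$. Set $\Delta\bz := \bz-\bar\bz=(\Delta v,\Delta u)^\top$ and $\Delta d := (x-v,w-Kx)^\top - (\bar x-\bar v,\bar w-K\bar x)^\top$. From \eqref{def:T3}, $T_P(\bz)-T_P(\bar\bz) = \Delta\bz + P\Delta d$ and $(I-T_P)(\bz)-(I-T_P)(\bar\bz) = -P\Delta d$. The positive definiteness of $Q$ is an immediate congruence consequence of $\Phi_K-P\succ 0$ and the invertibility of $M=P^{-1}$, so only the inequality part requires real work.

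First I would reduce \eqref{eq:T_A_ne} to a cleaner form in which $P$ disappears. Expanding the left-hand side via the $M$-inner product with $M=P^{-1}$ gives
\begin{equation*}
\|T_P(\bz)-T_P(\bar\bz)\|_M^2 = \|\Delta\bz\|_M^2 + 2\langle\Delta\bz,\Delta d\rangle + \|\Delta d\|_P^2,
\end{equation*}
while $MP=I$ yields $\|P\Delta d\|_Q^2 = \|\Delta d\|_{\Phi_K-P}^2$. Hence \eqref{eq:T_A_ne} is equivalent to the cleaner inequality
\begin{equation*}
2\langle\Delta\bz,\Delta d\rangle + \|\Delta d\|_{\Phi_K}^2 \leq 0,
\end{equation*}
which no longer involves $P$ and isolates the operator-theoretic content of the claim.

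Next I would extract two monotonicity inequalities from the resolvent characterizations \eqref{def:wa}--\eqref{def:wb}. The relation $(v-x)/\tau - K^*(\sigma Kv-u/\tau)\in A(x)$, applied at $x$ and $\bar x$ and then scaled by $\tau$, produces one inequality involving $\Delta x, \Delta v, \Delta u, K\Delta v$. Similarly, $\sigma(Kv+Kx-u/\gamma-w)\in B(w)$ together with the monotonicity of $B$ and a scaling by $\gamma$ produces a second inequality involving $\Delta w, \Delta v, \Delta x, \Delta u$. The remaining task is to expand the block quadratic form $\|\Delta d\|_{\Phi_K}^2$ using \eqref{def:Phi}, add it to $2\langle\Delta\bz,\Delta d\rangle$, and verify that the resulting combination equals exactly $-2$ times the sum of these two monotonicity inequalities, which makes the whole expression nonpositive.

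The main obstacle is the bookkeeping in that final step: several cross terms between $\Delta v, \Delta x, \Delta u, \Delta w, K\Delta v, K\Delta x$ appear with $\gamma$-weighted coefficients, and one must check that the chosen block structure of $\Phi_K$ (the off-diagonal $\gamma K^*$, $\gamma K$ and the diagonal entries $2I$ and $2\gamma I$) is calibrated precisely so that the cancellations occur without any further estimation. Notably, the entire chain consists only of identities plus the two monotonicity bounds, so no slack is introduced, which is consistent with the tightness of the condition $P \prec \Phi_K$ asserted in the paper.
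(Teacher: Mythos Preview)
Your reduction is correct, and the final claim checks out: expanding $2\langle\Delta\bz,\Delta d\rangle + \|\Delta d\|_{\Phi_K}^2$ term by term and comparing with $-2$ times the sum of the two monotonicity inequalities (from $A$ and from $B$, each scaled by $\tau$) yields an exact identity, so the whole expression is nonpositive with no slack beyond monotonicity.

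The underlying content is the same as the paper's proof---both rely solely on the monotonicity of $A$ and $B$ and pure algebra---but the organization differs. The paper works \emph{forward}: it starts from the two monotonicity inequalities \eqref{eq:mono A}--\eqref{eq:mono B}, sums them, and then applies a sequence of polarization-type identities (\eqref{eq:ip1}--\eqref{eq:ip4}, \eqref{eq:ip34}, \eqref{jy-1}) to massage the result into the form \eqref{eq:T_A_ne}, introducing auxiliary matrices $G$, $\widehat G$, $\widehat M$, $E_K$ along the way. You work \emph{backward}: you first eliminate $P$ entirely by the clean reduction to $2\langle\Delta\bz,\Delta d\rangle + \|\Delta d\|_{\Phi_K}^2 \le 0$, and only then invoke monotonicity to close the argument in one stroke. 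Your route is shorter and makes it transparent why the specific block entries of $\Phi_K$ are exactly what is needed; the paper's route, by contrast, makes the intermediate squared-norm structure more visible (e.g., the extra nonnegative term $\gamma\|w-\bar w-(Kv-K\bar v)\|^2$ in \eqref{eq:final}), which is occasionally useful in follow-up arguments such as Theorem~\ref{th:limit-case}.
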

\proof
For simplicity, denote $\bz^+=T_P(\bz)$ and $\bar{\bz}^+=T_P(\bar{\bz})$.
From  \eqref{def:T3} and $M=P^{-1}$, we have
\be\label{w-uv}
\left(\ba{c}v-x \\ Kx-w\ea\right)=M (\bz-\bz^+).
\ee
Recall that, for given $(v,u)$, $x$ and $w$ are defined in \eqref{def:wa}-\eqref{def:wb}.
We define $\bar{x}$ and $\bar{w}$ analogously by using $\bar{\bz}=(\bar{v},\bar{u})^\top$, i.e.,
$\bar x =J_{\tau A}(\bar v - \tau K^*(\sigma K\bar v-\bar u/\tau))$ and
$\bar w =J_{B/\sigma} \left(K\bar v+K\bar x-\bar u/\gamma\right)$.
Since $v - K^*(\gamma Kv-u) -x\in \tau A(x)$ and $\bar{v} - K^*(\gamma K\bar{v}-\bar{u}) -\bar{x}\in \tau A(\bar{x})$, it follows from the monotonicity of $A$ that
\be\label{eq:mono A}
 0 &\leq& \langle  x-\bar{x}, (v - K^*(\gamma Kv-u) -x)-(\bar{v} - K^*(\gamma K\bar{v}-\bar{u}) -\bar{x})\rangle\nonumber\\
 &=&\langle x-\bar{x}, (v-\bar{v})-(x-\bar{x})\rangle-\langle\gamma (Kv-K\bar{v})-(u-\bar{u}), Kx-K\bar{x}\rangle.
\ee
Analogously, we have $\gamma(Kv+Kx)-u-\gamma w\in \tau B(w)$ and $\gamma(K\bar{v}+K\bar{x})-\bar{u}-\gamma \bar{w}\in \tau B(\bar{w})$. Then, the monotonicity of $B$ yields
\be\label{eq:mono B}
 0 &\leq& \langle w-\bar{w},(\gamma(Kv+Kx)-u-\gamma w)-(\gamma(K\bar{v}+K\bar{x})-\bar{u}-\gamma \bar{w})\rangle \nonumber\\
   &=& -\langle w-\bar{w}, u-\bar{u}\rangle+\gamma \langle w-\bar{w}, (Kv+Kx)- (K\bar{v}+K\bar{x})-(w-\bar{w})\rangle.
\ee
By adding \eqref{eq:mono A} and \eqref{eq:mono B} and performing basic algebraic operations, we obtain
\be\label{eq:before key}
0&\leq& \langle x-\bar{x}, (v-\bar{v})-(x-\bar{x})\rangle+\langle u-\bar{u}, (Kx-K\bar{x})-(w-\bar{w})\rangle\nonumber\\
&&+\gamma \langle w-\bar{w}, (Kv+Kx)- (K\bar{v}+K\bar{x})-(w-\bar{w})\rangle {-\gamma\langle Kv-K\bar v, Kx-K\bar{x}\rangle.}
\ee

Next, we reformulate each term on the right-hand-side  of \eqref{eq:before key}  using equivalent transforms.
By adding the term $\langle\bar{v}-v, (v-\bar{v})-(x-\bar{x})\rangle$ to the first term in \eqref{eq:before key}, we can express it as
\be\label{eq:ip1}
\langle x-\bar{x}, (v-\bar{v})-(x-\bar{x})\rangle +\langle\bar{v}-v, (v-\bar{v})-(x-\bar{x})\rangle
= -\|(v-x) -(\bar{v}-\bar{x})\|^2.
\ee
By subtracting $\langle\bar{v}-v, (v-\bar{v})-(x-\bar{x})\rangle$ from the second term in \eqref{eq:before key} and using \eqref{def:T3}, it can be expressed as
\begin{align}
 & \langle (Kx-w)-(K\bar{x}-\bar{w}),  u-\bar{u}\rangle + \langle (v-x)-(\bar{v}-\bar{x}), v-\bar{v}\rangle \nonumber\\
 =\, &\langle (\bz-\bar{\bz})-(\bz^+-\bar{\bz}^+),\bz-\bar{\bz}\rangle_M \nonumber\\
 =\, &\frac{1}{2}\left(\|(\bz-\bz^+)-(\bar{\bz}-\bar{\bz}^+)\|_M^2+\|\bz-\bar{\bz}\|_M^2 -\|\bz^+-\bar{\bz}^+\|_M^2 \right), \label{eq:ip2}
\end{align}
where the first equality follows from \eqref{w-uv} and the fact that $M$ is self-adjoint and positive definite.
By using \eqref{id}, the third term in \eqref{eq:before key} (excluding the scalar $\gamma$) can be represented as
\be\label{eq:ip3}
 \langle w-\bar{w},   (Kv+Kx) &-& (K\bar{v}+K\bar{x})-(w-\bar{w})\rangle
=  \frac{1}{2}\|Kv-K\bar{v}\|^2+\frac{1}{2}\|Kx- K\bar{x}\|^2  \nonumber\\
&& -\frac{1}{2}\|Kx- K\bar{x}-(w-\bar{w})\|^2 -\frac{1}{2}\|w-\bar{w}-(Kv-K\bar{v})\|^2.
\ee
The last term in \eqref{eq:before key} (excluding the scalar $\gamma$) can be represented as
\be\label{eq:ip4}
- \langle K(v- \bar v), K(x- \bar{x})\rangle = \frac{1}{2}\big(
\|K(v-\bar v)- K(x-\bar{x})\|^2 - \|K(v-\bar{v})\|^2- \|K(x-\bar{x})\|^2\big).
\ee
Summing  \eqref{eq:ip3} and \eqref{eq:ip4} yields
\be\label{eq:ip34}
&& \langle w-\bar{w}, (Kv+Kx)- (K\bar{v}+K\bar{x})-(w-\bar{w})\rangle
- \langle K(v-\bar v), K(x-\bar{x})\rangle\nonumber\\
&=&\frac{1}{2}\|(Kv-K\bar{v})- (Kx-K\bar{x})\|^2-\frac{1}{2}\|Kx- K\bar{x}-(w-\bar{w})\|^2 -\frac{1}{2}\|w-\bar{w}-(Kv-K\bar{v})\|^2\nonumber\\
&=&\frac{1}{2}\|(\bz-\bz^+)-(\bar{\bz}-\bar{\bz}^+)\|_{G}^2 -\frac{1}{2}\|w-\bar{w}-(Kv-K\bar{v})\|^2,
\ee
where $G:=M^* \Diag(K^*K, -I_{\cG})M$ and the last equality follows from \eqref{w-uv}.
Finally, multiply \eqref{eq:ip34} by $\gamma$, add it to the sum of \eqref{eq:before key}-\eqref{eq:ip2}, and then multiply the result by 2. We then obtain
\be\label{eq:final}
\|\bz^+-\bar{\bz}^+\|_M^2&\leq& \|\bz-\bar{\bz}\|_M^2+\|(\bz-\bz^+)-(\bar{\bz}-\bar{\bz}^+)\|_M^2-2\|(v-x) -(\bar{v}-\bar{x})\|^2  \nonumber\\
&&+\gamma \|(\bz-\bz^+)-(\bar{\bz}-\bar{\bz}^+)\|_{G}^2  -\gamma\|w-\bar{w}-(Kv-K\bar{v})\|^2  \nonumber\\
&=&\|\bz-\bar{\bz}\|_M^2+\|(\bz-\bz^+)-(\bar{\bz}-\bar{\bz}^+)\|_M^2   - \|(\bz-\bz^+)-(\bar{\bz}-\bar{\bz}^+)\|_{\widehat{G}}^2  \nonumber \\
&& -\gamma\|w-\bar{w}-(Kv-K\bar{v})\|^2  \nonumber\\
&=&\|\bz-\bar{\bz}\|_M^2-\|(\bz-\bz^+)-(\bar{\bz}-\bar{\bz}^+)\|_{\widehat{G}-M}^2 -\gamma\|w-\bar{w}-(Kv-K\bar{v})\|^2,
\ee
where $\widehat{G} :=M^*\diag(2I_{\cH}-\gamma K^*K, \gamma I_{\cG}) M$.
Note that \eqref{w-uv} implies
\be
\|w-\bar{w}-(Kv-K\bar{v})\|^2 &=& \|(Kv-Kx+Kx-w)-(K\bar{v}-K\bar{x}+K\bar{x}-\bar{w})\|^2 \nonumber \\
&=&\|(\bz-\bz^+)-(\bar{\bz}-\bar{\bz}^+)\|_{\widehat{M}}^2, \label{jy-1}
\ee
where $\widehat{M} := M^* E_K M$ with $E_K :=\left[\ba{cc}K^*K~&~ K^*\\  K~&~I_{\cG}\ea\right]$.
Finally, \eqref{eq:T_A_ne} follows from
substituting \eqref{jy-1}  into \eqref{eq:final} and noting
\ben
\widehat{G}-M+\gamma\widehat{M} &=& M^*\left(\diag(2I_{\cH}-\gamma K^*K, \gamma I_{\cG})+\gamma E_K-M^{-1}\right)M\\
&=&M^*\left(\left[\ba{cc}2I~&~ \gamma K^*\\  \gamma K~&~2\gamma I\ea\right]-P\right)M
= M^* (\Phi_K-P) M \stackrel{\eqref{def:Q}}= Q \succ 0.
\een
The proof is completed.
\endproof

\begin{rem}\label{rem-tight}
In the proof of Proposition \ref{l:T_A ne}, apart from using the monotonicity of $A$ and $B$, we exclusively rely on identical transformations without any inequality relaxations.
Consequently, the upper bound result given in \eqref{eq:T_A_ne} cannot be improved.
\bi
\item[(i)] Given that $P\succ 0$, it is straightforward to note from the definition  of  $Q$ in \eqref{def:Q} that the upper bound condition $P \prec\Phi_K$ is both a sufficient and necessary condition for ensuring $Q\succ 0$. The condition $Q\succ 0$, which is equivalent to $0 \prec P \prec\Phi_K$, plays a crucial and indispensable role in guaranteeing the convergence of the fixed-point iteration on the operator $T_{P}$, see Theorem \ref{th:main}. Moreover, the condition $0\prec P \prec\Phi_K$ is sharp in ensuring convergence. An illustration through a concrete example can be found in Section \ref{sec:he}.

\item[(ii)] When $Q = 0$ (i.e., $P = \Phi_K \succ 0$), the operator $T_P$ is merely nonexpansive in the $M$-norm. According to Baillon's nonlinear mean ergodic theorem \cite{Baillon1975mean}, for any initial point $\bz_0 \in \mathcal{H} \times \mathcal{G}$, the Cesaro mean $\frac{1}{N} \sum_{k=0}^{N-1} T_P^k(\bz_0)$ converges weakly to a fixed-point of $T_P$ as $N \to +\infty$.

\item[(iii)] Given the property \eqref{eq:T_A_ne}, which generalizes the key property holding for classical firmly nonexpansive operators, it can be verified that the Browder's demiclosedness principle \cite[Theorem 4.27]{Bauschke2011book} and the convergence guarantees of the  Krasnosel'ski\u{i}-Mann fixed-point iteration \cite[Theorem 5.15]{Bauschke2011book} hold as well for $T_P$. These results will be useful in the proofs of our Theorems \ref{th:main} and \ref{th:limit-case}.
\ei
\end{rem}

\begin{rem}\label{rem-P}
For a general single-valued operator $T$ defined on an arbitrary Hilbert space,  $T$ is said to be extended firmly nonexpansive in the $M$-norm if it satisfies
\[\label{jy-0717}
\|T(x)-T(y)\|_M^2 \leq \|x-y\|_M^2 - \|(x-T(x)) - (y-T(y))\|_Q^2, \quad \forall x, y,
\]
where $M$ and $Q$ are bounded linear operators that are self-adjoint and positive definite.
The term ``extended" is used here to emphasize that $Q \neq M$ is permitted. In particular, if
$T$ satisfies \eqref{jy-0717} with $Q=M$, then $T$  is called firmly nonexpansive in the $M$-norm.
For the operator $T_P$ defined in \eqref{def:T}, we present the following remarks based on the key property \eqref{eq:T_A_ne}.

\bi
\item[(i)] Lemma \ref{lem-a} shows that Algorithm \ref{alg-os} can be considered as a fixed-point iteration on the operator $T_P$ as defined in \eqref{def:T} with $P = \diag(\theta I_{\cH},\eta\gamma I_{\cG})$. In this case, we have $M=M^*=P^{-1}=\diag(\frac{1}{\theta}I_{\cH}, \frac{1}{\eta \gamma} I_{\cG})\succ0$ and
\ben
Q:=M^*\left(\Phi_K-P\right)M=\left[\ba{cc}\frac{2-\theta}{\theta^2} I~&~\frac{1}{\theta \eta } K^*\\ \frac{1}{\theta \eta} K~&~\frac{2-\eta}{\eta^2\gamma} I\ea\right]\succ 0
\text{~~for any~~}(\theta,\eta, \gamma)\in\Theta.
\een
Thus, it follows from \eqref{eq:T_A_ne} that $T_P$ is extended firmly nonexpansive in the $M$-norm, and in fact, this holds whenever $P$ satisfies
$0 \prec P \prec \Phi_K$.

\item[(ii)] If  $P$ is chosen such that $0\prec P\preceq\frac{1}{2}\Phi_K$,
then from (\ref{def:Q}) and $M = P^{-1}$ we have
$Q-M = P^{-1}(\Phi_K-2P)P^{-1}\succeq0$, and thus $Q\succeq M\succ 0$.
As a result,  in this case $T_P$ is firmly nonexpansive in the $M$-norm since
by \eqref{eq:T_A_ne} we have
     \ben
 \|T_P(\bz)-T_P(\bar{\bz})\|_M^2
  \leq \|\bz-\bar{\bz}\|_M^2 - \|(I-T_P)(\bz)-(I-T_P)(\bar{\bz})\|_M^2.
\een

\item[(iii)] If $\Phi_K\succeq I$ is satisfied, then by (\ref{def:Q}) and $M=P^{-1}\succ0$ we have
\ben
Q = M^*\left(\Phi_K-P\right)M \succeq M^*\left(I-P\right)M=P^{-1}\left(I-P\right)P^{-1}.
 \een
Then it follows from  \eqref{eq:T_A_ne} and Lemma \ref{lem-ne} (ii) that $T_P$ is $P$-averaged.
Note that from \eqref{def:Phi}, the condition $ \Phi_K \succeq I $ is equivalent to $ \gamma^2\|K\|^2 \leq 2\gamma - 1 $; this condition is satisfiable for appropriately chosen $ \gamma $ if and only if $ \|K\| \leq 1 $.
\ei
\end{rem}

\section{Convergence analysis of Algorithm \ref{alg-os}}
\label{sec:Convergence analysis}

\subsection{Convergence}
Next,  we characterize the set of fixed points of the operator $T_P$ by using the primal-dual solutions given in \eqref{cond-solver}.
Again, recall that $\gamma=\tau\sigma$.
\begin{lemma}\label{l:fixed points}
Let the operator $T_P$ be defined as in \eqref{def:T}. Then the following statements hold.
\begin{enumerate}[(a)]
\item  If $(v^\star,u^\star)\in\Fix T_P$, then $(v^\star,\sigma K v^\star-u^\star/\tau)\in\Omega$.
\item  If $(x^\star,y^\star)\in\Omega$, then $(x^\star,\gamma Kx^\star-\tau y^\star)\in\Fix T_P$ and $x^\star\in \zer\left(A+K^*BK\right)$.
\end{enumerate}
Consequently, $\Fix T_P \neq\varnothing$ if and only if $\Omega\neq\varnothing$, and $\zer\left(A+K^*BK\right)\neq\varnothing$ if either $\Fix T_P$ or
$\Omega$ is nonempty.
\end{lemma}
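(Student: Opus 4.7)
The plan is to exploit the fact that $P\succ 0$ so that a fixed point of $T_P$ is characterized by the vanishing of the vector $(x-v,\,w-Kx)^\top$ inside the definition \eqref{def:T3}, and then to translate the resolvent identities \eqref{def:wa}--\eqref{def:wb} into the subdifferential-style inclusions appearing in the definition \eqref{cond-solver} of $\Omega$. Throughout the argument I will repeatedly use $\gamma=\tau\sigma$, hence $\sigma/\gamma=1/\tau$ and $\tau/\gamma=1/\sigma$, which is the algebraic glue that makes the substitution $y^\star=\sigma Kv^\star-u^\star/\tau$ (respectively $u^\star=\gamma Kx^\star-\tau y^\star$) come out right.

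For part (a), I would start from $(v^\star,u^\star)\in\Fix T_P$ and observe that $T_P(v^\star,u^\star)=(v^\star,u^\star)$ together with $P\succ 0$ forces $x^\star=v^\star$ and $w^\star=Kx^\star$, where $x^\star$ and $w^\star$ are obtained from \eqref{def:wa}--\eqref{def:wb}. Rewriting the resolvent equation \eqref{def:wa} as the inclusion $v^\star-\tau K^*(\sigma Kv^\star-u^\star/\tau)-x^\star\in\tau A(x^\star)$ and plugging in $x^\star=v^\star$ yields $-K^*y^\star\in A(v^\star)$ with $y^\star:=\sigma Kv^\star-u^\star/\tau$. Similarly, \eqref{def:wb} rewritten as an inclusion, combined with $w^\star=Kv^\star$ and the identity $\sigma/\gamma=1/\tau$, gives $y^\star\in B(Kv^\star)$. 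Hence $(v^\star,y^\star)\in\Omega$.

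For part (b), I would reverse this reasoning: given $(x^\star,y^\star)\in\Omega$, set $v^\star:=x^\star$ and $u^\star:=\gamma Kx^\star-\tau y^\star$, and verify that the $x$ computed by \eqref{def:wa} equals $x^\star$ (using $-K^*y^\star\in A(x^\star)$) and the $w$ computed by \eqref{def:wb} equals $Kx^\star$ (using $y^\star\in B(Kx^\star)$, after noting that $Kv^\star+Kx^\star-u^\star/\gamma$ simplifies to $Kx^\star+y^\star/\sigma$). Then \eqref{def:T3} immediately produces $T_P(x^\star,u^\star)=(x^\star,u^\star)$. The inclusion $x^\star\in\zer(A+K^*BK)$ follows by applying $K^*$ to $y^\star\in B(Kx^\star)$ and adding the result to $-K^*y^\star\in A(x^\star)$.

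The final ``consequently'' clause is then immediate: the equivalence $\Fix T_P\neq\varnothing \iff \Omega\neq\varnothing$ is just the combination of (a) and (b), and nonemptiness of $\zer(A+K^*BK)$ follows from either case via (b) (applied directly, or applied after invoking (a)). I do not anticipate a serious obstacle here; the only place where one has to be careful is in keeping the bookkeeping of the scalars $\tau$, $\sigma$ and $\gamma=\tau\sigma$ consistent when converting between the ``dual variable'' $y^\star$ and the second block $u^\star$ of the fixed-point argument, which is precisely the reason the statement uses the two different affine substitutions $y^\star=\sigma Kv^\star-u^\star/\tau$ in (a) and $u^\star=\gamma Kx^\star-\tau y^\star$ in (b).
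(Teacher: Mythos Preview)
Your proposal is correct and follows essentially the same approach as the paper's proof: exploit $P\succ 0$ so that $T_P(\bz)=\bz$ forces $x=v$ and $w=Kx$ in \eqref{def:wa}--\eqref{def:wb}, then convert the resolvent identities into the inclusions $-K^*y^\star\in A(v^\star)$ and $y^\star\in B(Kv^\star)$ via the substitution $y^\star=\sigma Kv^\star-u^\star/\tau$ (and its inverse for part (b)). The bookkeeping with $\gamma=\tau\sigma$ you flag is exactly what the paper tracks as well, and the concluding clause is handled the same way.
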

\proof
(a) Let $(v^\star,u^\star)\in\Fix T_P$ and set $x^\star=J_{\tau A}(v^\star - \tau K^*(\sigma Kv^\star-u^\star/\tau))$.
Since $P\succ0$, \eqref{def:T} implies that $v^\star=x^\star=J_{\tau A}(v^\star - \tau K^*(\sigma Kv^\star-u^\star/\tau))$
and $J_{B/\sigma} \left(Kv^\star+Kx^\star-u^\star/\gamma\right)=Kx^\star$. Let $y^\star = \sigma Kv^\star-u^\star/\tau$. Then,  we have
\ben
A(v^\star)\ni- K^*(\sigma Kv^\star-u^\star/\tau)=-K^*y^\star \text{~~and~~}Kv^\star=Kx^\star = J_{B/\sigma} \left(Kv^\star+y^\star/\sigma\right),
\een
which implies $y^\star\in B(Kv^\star)$. Together with $A(v^\star)\ni - K^*y^\star$, we obtain $(v^\star,y^\star)\in\Omega$.
Recall that $y^\star = \sigma Kv^\star-u^\star/\tau$. The result $(v^\star,\sigma Kv^\star-u^\star/\tau)\in\Omega$ follows.

(b) Let $(x^\star,y^\star)\in\Omega$ and set $u^\star=\gamma Kx^\star-\tau y^\star$.
It follows from \eqref{cond-solver} that
\be\label{lem-in1}
(I+\tau A)(x^\star)\ni x^\star-\tau K^*y^\star=x^\star-\tau K^*(\sigma Kx^\star-u^\star/\tau).
\ee
Thus, we have $x^\star=J_{\tau A}(x^\star - \tau K^*(\sigma Kx^\star-u^\star/\tau))$. Furthermore, from \eqref{cond-solver} we have
\be\label{lem-in2}
Kx^\star+ \sigma^{-1} B(Kx^\star) \ni Kx^\star+ y^\star/\sigma =2Kx^\star-  u^\star/\gamma.
\ee
This further implies $Kx^\star=J_{B/\sigma} \left(2Kx^\star-u^\star/\gamma\right)$ and
$(x^\star,u^\star)=(x^\star,\gamma Kx^\star-\tau y^\star)\in\Fix T_P$.
Using \eqref{lem-in1}-\eqref{lem-in2} and $y^\star = \sigma Kx^\star-u^\star/\tau$, we obtain
\ben
A(x^\star)\ni  -K^*(\sigma Kx^\star-u^\star/\tau)~~\mbox{and}~~
B(Kx^\star)\ni \sigma Kx^\star-u^\star/\tau.
\een
This implies $K^*B(Kx^\star)\ni K^*(\sigma Kx^\star-u^\star/\tau)$, and thus $0\in A(x^\star)+ K^*B(Kx^\star)$.
The rest of the theorem's statements follow directly.
\endproof

\begin{theorem}\label{th:main}
Let $P$ be a self-adjoint operator  that satisfies $0\prec P\prec \Phi_K$, and  let $ T_P$ be defined as in \eqref{def:T} so that   $\Fix  T_P \neq \varnothing$. Let $\{\bz_n:=(v_n,u_n)^\top\}_{n \in\N}$ be generated by
$\bz_{n+1} = T_{P} (\bz_n)$ from any $\bz_0 \in \cH\times \cG.$
Then the following assertions hold.
\begin{enumerate}[(a)]
\item The sequence $\{\bz_n\}_{n \in\N}$ converges weakly to a point $\bz^\star\in\Fix T_P$.
\item The sequence $\{v_n\}_{n \in\N}$ converges weakly to {a point   $v^\star \in\zer\left(A+K^*BK\right)$.}
\end{enumerate}
\end{theorem}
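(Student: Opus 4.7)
The strategy for part (a) is the standard Krasnosel'ski\u{\i}-Mann argument adapted to the extended firmly nonexpansive operator $T_P$: establish Fej\'er monotonicity of $\{\bz_n\}$ with respect to $\Fix T_P$ in the $M$-norm, derive asymptotic regularity from the surplus term in \eqref{eq:T_A_ne}, apply a demiclosedness principle to identify weak cluster points as fixed points, and conclude uniqueness of the weak limit via Opial's lemma. Part (b) will then follow directly from Lemma \ref{l:fixed points}.

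First I would specialize Proposition \ref{l:T_A ne} to $\bar{\bz}=\bz^\star$ for an arbitrary $\bz^\star\in \Fix T_P$. Since $T_P(\bz^\star)=\bz^\star$, the key inequality \eqref{eq:T_A_ne} collapses to
\[\|\bz_{n+1}-\bz^\star\|_M^2 \leq \|\bz_n-\bz^\star\|_M^2 - \|\bz_n-\bz_{n+1}\|_Q^2.\]
Because $M\succ 0$ and $Q\succ 0$ under the hypothesis $0\prec P\prec \Phi_K$, this yields Fej\'er monotonicity of $\{\bz_n\}$ with respect to $\Fix T_P$ in the $M$-norm; in particular $\{\bz_n\}$ is bounded and $\{\|\bz_n-\bz^\star\|_M\}$ converges for every $\bz^\star\in\Fix T_P$. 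Telescoping the same inequality gives $\sum_{n=0}^\infty \|\bz_n-\bz_{n+1}\|_Q^2<\infty$, hence $\|\bz_n-\bz_{n+1}\|\to 0$ (asymptotic regularity).

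Next I would extract a weak cluster point from the bounded sequence $\{\bz_n\}$ and identify it as a fixed point. Let $\bz_{n_k}\rightharpoonup \bz^\infty$; by asymptotic regularity $(I-T_P)(\bz_{n_k})\to 0$ in norm, so the demiclosedness principle for $T_P$ recorded in Remark \ref{rem-tight}(iii) forces $\bz^\infty\in\Fix T_P$. Since every weak cluster point lies in $\Fix T_P$ and $\{\|\bz_n-\bz^\star\|_M\}$ converges for each $\bz^\star\in \Fix T_P$, Opial's lemma, applied in the Hilbert space $\cH\times\cG$ equipped with $\langle\cdot,\cdot\rangle_M$ (whose norm is equivalent to the original one since $M\succ 0$ is boundedly invertible), implies that the entire sequence $\{\bz_n\}$ converges weakly to a single point $\bz^\star\in\Fix T_P$, proving (a).

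For (b), write $\bz^\star=(v^\star,u^\star)$. Weak continuity of the coordinate projection gives $v_n\rightharpoonup v^\star$, and Lemma \ref{l:fixed points}(a) then yields $(v^\star,\sigma K v^\star-u^\star/\tau)\in\Omega$, so that $v^\star\in\zer(A+K^*BK)$ via Lemma \ref{l:fixed points}(b). The main obstacle I anticipate is the demiclosedness step, which is exactly the point where $T_P$ departs from a classical firmly nonexpansive operator (since typically $Q\neq M$): one must check that the standard Browder-type proof relies only on the weakened property \eqref{eq:T_A_ne} and not on the stronger identity $Q=M$, which is precisely the content of Remark \ref{rem-tight}(iii).
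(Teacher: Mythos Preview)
Your proposal is correct and follows essentially the same route as the paper: specialize \eqref{eq:T_A_ne} at a fixed point to get Fej\'er monotonicity in the $M$-norm and summability of $\|\bz_n-\bz_{n+1}\|_Q^2$, use $Q\succ 0$ for asymptotic regularity, invoke the demiclosedness principle from Remark \ref{rem-tight}(iii), and conclude via Opial; part (b) is then read off from Lemma \ref{l:fixed points}. The paper's proof is terser (it simply cites the Krasnosel'ski\u{\i}-Mann argument of \cite[Theorem~5.15]{Bauschke2011book}), but the underlying steps are identical to yours.
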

\begin{proof}
(a). Since $\Fix T_P\neq\varnothing$ and $0\prec P\prec \Phi_K$,  for every $\bz^\star\in \Fix  T_P$, Proposition~\ref{l:T_A ne} implies that
\ben
 \|\bz_{n+1}-\bz^\star\|_M^2 + \|\bz_{n+1}-\bz_n\|_Q^2
  \leq \|\bz_n-\bz^\star\|_M^2,
\een
from which we can easily derive $\sum_{n \in\N}\|\bz_{n+1}-\bz_n\|_Q^2\leq \|\bz_0-\bz^\star\|_M^2$, which implies $\lim_{n\to\infty}\|\bz_{n+1}-\bz_n\|=0$ from $Q\succ0$. Recall that the convergence results of Krasnosel'ski\u{i}-Mann fixed-point iteration can be generalized to the operator $T_P$, see Remark \ref{rem-tight} (iii). Then, by applying the similar procedure as in \cite[Theorem~5.15]{Bauschke2011book}, we therefore deduce that $\{\bz_n\}_{n \in\N}$ converges weakly to a point $\bz^\star\in\Fix T_P$.

(b). Recall $\Fix T_P\neq\varnothing$, Lemma~\ref{l:fixed points} implies $\zer\left(A+K^*BK\right)\neq\varnothing$. Note that $\bz_n=(v_n,u_n)^\top$ and (a), we observe $\{(v_n,u_n)\}_{n\in\N}$  converges weakly to a point $(v^\star,u^\star)\in \Fix T_P$. This together with Lemma~\ref{l:fixed points} implies that $v^\star \in\zer\left(A+K^*BK\right)$.
\end{proof}

\begin{cor}
Let $\{(x_n,y_n)\}_{n \in\N}$ be generated by Algorithm \ref{alg-os}. Then the sequence $\{x_n\}_{n \in\N}$ converges weakly to {a point   $v^\star \in\zer\left(A+K^*BK\right)$.}
\end{cor}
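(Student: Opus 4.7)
The plan is to derive the corollary as an almost immediate consequence of Theorem \ref{th:main} together with the convex-combination update \eqref{os-z}.

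First, I would verify that the hypotheses of Theorem \ref{th:main} are satisfied for the operator $T_P$ arising from Algorithm \ref{alg-os}. By Lemma \ref{lem-a}, the iterates $\bz_n=(v_n,u_n)^\top$ satisfy $\bz_{n+1}=T_P(\bz_n)$ with $P=\Diag(\theta I_{\cH},\eta\gamma I_{\cG})\succ0$. I then need to check the upper bound $P\prec\Phi_K$: writing out $\Phi_K-P=\bigl[\begin{smallmatrix}(2-\theta)I & \gamma K^*\\ \gamma K & (2-\eta)\gamma I\end{smallmatrix}\bigr]$ and applying the Schur complement (using $\theta\in(0,2)$ to invert the $(1,1)$-block) reduces positivity to the inequality $(2-\theta)(2-\eta)>\gamma\|K\|^2$, which is exactly the defining condition of $(\theta,\eta,\gamma)\in\Theta$ in \eqref{def:parameter-set}. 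This observation was already recorded in Remark \ref{rem-P} (i). Moreover, Assumption \ref{asmp-1} gives $\Omega\neq\varnothing$, hence $\Fix T_P\neq\varnothing$ by Lemma \ref{l:fixed points}.

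Second, I would apply Theorem \ref{th:main} (b) to conclude that $\{v_n\}_{n\in\N}$ converges weakly to some $v^\star\in\zer(A+K^*BK)$. Since weak convergence is preserved under shifts of the index, the shifted sequence $\{v_{n+1}\}_{n\in\N}$ also converges weakly to $v^\star$.

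Finally, I would use the convex-combination relation \eqref{os-z}, namely $v_{n+1}=\theta x_n+(1-\theta)v_n$ with $\theta\in(0,2)\setminus\{0\}$, to solve for
\begin{equation*}
x_n=\frac{1}{\theta}\bigl(v_{n+1}-(1-\theta)v_n\bigr).
\end{equation*}
By linearity of weak limits, $x_n\rightharpoonup \tfrac{1}{\theta}\bigl(v^\star-(1-\theta)v^\star\bigr)=v^\star$. This yields the desired weak convergence of $\{x_n\}$ to $v^\star\in\zer(A+K^*BK)$.

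There is no serious obstacle here: the corollary is essentially a bookkeeping step that transfers the conclusion of Theorem \ref{th:main} from the auxiliary sequence $v_n$ to the primal iterate $x_n$. The only minor point that must be handled cleanly is the verification $0\prec P\prec\Phi_K$ under the standing parameter choice $(\theta,\eta,\gamma)\in\Theta$, but this is a direct Schur-complement computation already implicit in Remark \ref{rem-P} (i).
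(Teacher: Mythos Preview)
Your proposal is correct and follows essentially the same route as the paper: verify $0\prec P\prec\Phi_K$ for $P=\Diag(\theta I_{\cH},\eta\gamma I_{\cG})$ under $(\theta,\eta,\gamma)\in\Theta$, invoke Theorem \ref{th:main}(b) to get $v_n\rightharpoonup v^\star\in\zer(A+K^*BK)$, and then recover $x_n$ from the convex-combination relation \eqref{os-z} to conclude $x_n\rightharpoonup v^\star$. Your write-up is in fact slightly more careful than the paper's, since you explicitly justify $\Fix T_P\neq\varnothing$ via Assumption \ref{asmp-1} and Lemma \ref{l:fixed points}.
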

\proof
From Lemma \ref{lem-a}, Algorithm \ref{alg-os}   can be regarded as a fixed-point iteration of $T_P$ with $P=\Diag(\theta I_{\cH},\eta\gamma I_{\cG})\succ0$.
For any $(\theta,\eta,\gamma)\in \Theta$ defined in \eqref{def:parameter-set}, we have
\ben
\Phi_K-P=\left[\ba{cc}(2-\theta)I~&~ \gamma K^*\\  \gamma K~&~(2-\eta)\gamma I\ea\right]\succ0.
\een
Consequently, we obtain $0\prec P\prec\Phi_K$, which satisfies the condition stated in Theorem \ref{th:main}.
By Theorem \ref{th:main}(b), $\{v_n\}_{n \in\N}$ converges weakly to a point $v^\star \in\zer\left(A+K^*BK\right)$.
The sequence $\{x_n\}_{n \in\N}$ converges weakly to $v^\star$ as well since $x_n= (v_n-(1-\theta) v_{n-1})/\theta$.
\endproof

\subsection{Convergence rate when $0\prec P\preceq\frac{1}{2}\Phi_K$}
For any $\bz^\star\in \Fix T_P$, it follows from $\bz_{n+1}=T_P(\bz_{n})$ and Proposition~\ref{l:T_A ne} that
\ben
 \|\bz_{n+1}-\bz^\star\|_M^2\leq \|\bz_{n}-\bz^\star\|_M^2- \|\bz_{n+1}-\bz_{n}\|_Q^2, \quad \forall n\geq 1.
\een
Consequently,  $\|\bz_{n+1}-\bz_{n}\|_Q^2\leq \|\bz_{n}-\bz^\star\|_M^2$ for all $n\geq 1$, the sequence $\{\|\bz_{n+1}-\bz^\star\|_M^2\}_{n\in\N}$ is monotonically nonincreasing, and
\be\label{sum-z}
\sum_{n=0}^{N-1}\|\bz_{n+1}-\bz_{n}\|_Q^2\leq\|\bz_{0}-\bz^\star\|_M^2, \quad \forall N\geq1.
\ee
Next, we  derive an asymptotic convergence rate of the fixed point residual $\|\bz_{n-1} - T_P(\bz_{n-1})\|$, which is identical to   $\|\bz_{n-1} - \bz_{n}\|$, under the more demanding condition $0\prec P\preceq\frac{1}{2}\Phi_K$.

\begin{theorem}\label{th:rate}
Assume $0\prec P\preceq\frac{1}{2}\Phi_K$, and let
$\{\bz_n\}_{n \in\N}$ be the sequence generated by $\bz_{n+1}=T_P(\bz_{n})$ from any initial point   $\bz_0 \in \cH\times \cG$. Then, for any $N\geq1$ and  $\bz^\star\in \Fix T_P$, we have
$\|\bz_{N}-\bz_{N-1}\|_M^2 \leq \|\bz_{0}-\bz^\star\|_M^2/N$. Furthermore, there holds $\|\bz_{N}-\bz_{N-1}\|_M^2= o(1/N)$ as $N\rightarrow\infty$.
\end{theorem}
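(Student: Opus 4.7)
The plan is to exploit the stronger inequality available under the condition $0\prec P\preceq\tfrac{1}{2}\Phi_K$, which according to Remark \ref{rem-P}(ii) upgrades $T_P$ from extended firmly nonexpansive to firmly nonexpansive in the $M$-norm, i.e.\ $Q\succeq M$ in the bound \eqref{eq:T_A_ne}. The two ingredients I need are monotonicity of the residual sequence $a_n := \|\bz_n-\bz_{n-1}\|_M^2$ and summability of this sequence; combined they give both the $\mathcal{O}(1/N)$ and the $o(1/N)$ rates.

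First I would establish monotonicity of $\{a_n\}$. Applying Proposition~\ref{l:T_A ne} with the pair $(\bz,\bar\bz) = (\bz_n,\bz_{n-1})$ and using $Q\succeq M$, I obtain
\[
\|\bz_{n+1}-\bz_n\|_M^2 \le \|\bz_n-\bz_{n-1}\|_M^2 - \|(\bz_n-\bz_{n+1})-(\bz_{n-1}-\bz_n)\|_M^2 \le \|\bz_n-\bz_{n-1}\|_M^2,
\]
so $a_{n+1}\le a_n$. Next I would use \eqref{sum-z} combined with $Q\succeq M\succ 0$ to deduce summability: $\sum_{n=0}^{N-1} a_{n+1} \le \sum_{n=0}^{N-1}\|\bz_{n+1}-\bz_n\|_Q^2 \le \|\bz_0-\bz^\star\|_M^2$ for every $N\ge 1$.

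The $\mathcal{O}(1/N)$ bound then follows immediately by a telescoping/monotonicity argument: since $a_N \le a_n$ for all $n\le N$, we have
\[
N\, a_N \;\le\; \sum_{n=1}^{N} a_n \;\le\; \|\bz_0-\bz^\star\|_M^2,
\]
which is exactly $\|\bz_N-\bz_{N-1}\|_M^2\le \|\bz_0-\bz^\star\|_M^2/N$. Finally, the sharpening to $o(1/N)$ is a direct invocation of Lemma~\ref{lem-con-rate}, whose hypotheses (nonnegativity, monotone nonincreasingness, and summability) have just been verified for $\{a_n\}$.

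There is no real obstacle here; the only subtle point is making sure the stronger condition $0\prec P\preceq\tfrac{1}{2}\Phi_K$ is used at the right place, namely to pass from $\|\cdot\|_Q$ to $\|\cdot\|_M$ in Proposition~\ref{l:T_A ne}. Without $Q\succeq M$ one can still get the sum bound in \eqref{sum-z}, but one cannot compare consecutive residuals in the $M$-norm, which is precisely what allows the monotonicity step — and hence the pointwise rate — to go through.
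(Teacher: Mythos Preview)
Your proposal is correct and follows essentially the same route as the paper: invoke Remark~\ref{rem-P}(ii) to upgrade to firm nonexpansiveness in the $M$-norm, deduce monotonicity and summability of the residuals $\|\bz_{n}-\bz_{n-1}\|_M^2$, and then apply Lemma~\ref{lem-con-rate}. The paper's proof is terser (it declares the monotonicity and summability ``elementary to show''), but your argument supplies exactly those details and nothing more.
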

\proof
Based on Remark \ref{rem-P} (ii), under the condition $0\prec P\preceq\frac{1}{2}\Phi_K$, $T_P$ is firmly nonexpansive in the $M$-norm. Then, it is elementary to show that $\|\bz_{n+1}-\bz_{n}\|_M$ is monotonically nonincreasing and $\sum_{n=0}^{N-1}\|\bz_{n+1}-\bz_{n}\|_M^2\leq \|\bz_{0}-\bz^\star\|_M^2$, from which it follows
$\|\bz_{N}-\bz_{N-1}\|_M^2 \leq \|\bz_{0}-\bz^\star\|_M^2/N$ for any $N\geq 1$. Moreover, the sequence $\{\|\bz_{n}-\bz_{n-1}\|_M^2\}_{n\in\N}$ fulfills
the conditions of Lemma \ref{lem-con-rate}, and thus it holds that $\|\bz_{N}-\bz_{N-1}\|_M^2= o(1/N)$ as $N\rightarrow\infty$.
\endproof

\subsection{Convergence rate for composite convex optimization problem}
In this section, we apply Algorithm \ref{alg-os} to the composite convex optimization problem \eqref{two-block}, which corresponds to \eqref{inclusion} with  $A = \partial g$ and $B=\partial f$. In this case, we have $J_{\tau A} = \prox_{\tau g}$ and $J_{B/\sigma} = \prox_{f/\sigma}$, where $\prox_{\tau g}$ and $\prox_{f/\sigma}$ are the proximal operators\cite[Definition 12.23]{Bauschke2011book} of $\tau g$ and $f/\sigma$, respectively. For any $x\in \cH$, it holds that $z = \prox_{\tau h}(x)$ if and only if
\be\label{property-p}
\tau \big(h(y) - h(z)\big) \geq  \langle x-z, y-z\rangle,~~ \forall y\in \cH.
\ee
By introducing an auxiliary variable $w\in\cG$, we can rewrite \eqref{two-block} equivalently as
\be\label{two-block2}
\min\nolimits_{x,w}  \{ g(x)+f(w) ~\mid~ Kx-w=0, \, x\in\cH, \, w\in\cG\}.
\ee
We shall establish ${\cal O}(1/N)$ ergodic convergence rate of Algorithm \ref{alg-os} measured by the function value gap and constraint violations of the equivalent problem \eqref{two-block2}.

Let $y\in \cG$ be the Lagrange multiplier. The objective function and the Lagrangian associated to \eqref{two-block2} are denoted respectively by
\begin{align*}\label{def:PhiL}
        \Phi(x,w)     :=   g(x) + f(w) \text{~~and~~}
      \cL(x,w,y)      :=   \Phi(x,w) + \langle y, Kx - w\rangle.
\end{align*}
We make the following blanket assumptions.
\begin{asmp}\label{asmp-2}
Assume that the set of solutions of \eqref{two-block}, and hence \eqref{two-block2}, is nonempty and,
in addition, there exists $\tilde x\in\text{ri}(\text{dom}(g))$ such that $K\tilde x\in\text{ri}(\text{dom}(f))$.
\end{asmp}

Under Assumption \ref{asmp-2}, it follows from \cite[Corollaries 28.2.2 and 28.3.1]{Rockafellar1970Convex} that
$({x^{\star}}, {w^{\star}}) \in \cH\times \cG$ is a solution of \eqref{two-block2} if and only if there exists an optimal solution ${y^{\star}}\in \cG$  to its dual problem
such that $({x^{\star}}, {w^{\star}}, {y^{\star}})$ is a saddle point of $\cL(x,w,y)$, i.e.,
\ben
\cL({x^{\star}},{w^{\star}},y) \leq \cL({x^{\star}},{w^{\star}},{y^{\star}}) \leq\cL(x,w,{y^{\star}}) \text{~~for all~~} (x, w, y) \in \cH\times \cG \times \cG.
\een
We denote the set of saddle points of $\cL(x,w,y)$ by $\widetilde{\Omega}$, which is nonempty under  Assumption \ref{asmp-2} and is given by
\[\label{def:Omega}
\widetilde{\Omega} = \{({x^{\star}}, {w^{\star}}, {y^{\star}}) \in \R^q\times \R^p \times \R^p ~\mid~ -K^\top {y^{\star}} \in \partial g({x^{\star}}), \,\, {y^{\star}} \in \partial f({w^{\star}}), \,\, K{x^{\star}} = {w^{\star}}\}.
\]
Furthermore, for any $(x,w,y)\in \cH\times \cG \times \cG$, we define
\begin{align}\label{def:G}
  J(x,w,y)
:= \cL(x,w,y)  - \cL({x^{\star}},{w^{\star}}, y)
= \Phi(x,w)+\langle y, Kx-w\rangle  -  \Phi({x^{\star}},{w^{\star}}).
\end{align}

Recall that Algorithm \ref{alg-os} can be viewed as a fixed-point iteration in terms of the operator $T_P$ with $P=\Diag(\theta I_{\cH},\eta\gamma I_{\cG})$. We have the following result.

\begin{lemma}\label{lem-opt}
Let  $\{(x_n,v_n, y_n)\}_{n\in\N}$ be the sequence generated by Algorithm \ref{alg-os} with $A=\partial g$ and $B=\partial f$.
Set $\bz_n=(v_n, u_n)=(v_n, \gamma Kv_n-\tau y_{n-1})$ and $w_n=\prox_{f/\sigma}((Kv_n+Kx_n)-\frac{1}{\gamma}u_n)$.
 Then, for any $y\in \cG$ we have
\begin{align}\label{sec43lem}
\|\bz_{n+1}-\bz^\star(y)\|_M^2+2 \tau J(x_n,w_n,y) \leq  \|\bz_{n}-\bz^\star(y)\|_M^2-\|\bz_{n+1}-\bz_{n}\|_Q^2,
\end{align}
where $\bz^\star(y) :=(v^\star, u^\star(y))^\top $ with $u^\star(y) :=\gamma Kv^\star-\tau y$, $M = P^{-1}$ and $Q$ is defined in \eqref{def:Q} as in Proposition \ref{l:T_A ne}.
\end{lemma}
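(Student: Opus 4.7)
The plan is to transcribe the proof of Proposition~\ref{l:T_A ne}, replacing its two monotonicity estimates \eqref{eq:mono A}--\eqref{eq:mono B} by the sharper one-sided subgradient inequalities available for $g$ and $f$. This single substitution produces the extra term $2\tau J(x_n,w_n,y)$ on the left-hand side of \eqref{sec43lem}, while the remaining (purely algebraic) chain in the proof of Proposition~\ref{l:T_A ne} carries over unchanged.

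First, I would fix a saddle point $(x^\star,w^\star,y^\star)\in\widetilde\Omega$, so $Kx^\star=w^\star$, and in the notation of Proposition~\ref{l:T_A ne} make the substitutions $\bar v:=x^\star$, $\bar x:=x^\star$, $\bar w:=w^\star$, and $\bar u:=u^\star(y)=\gamma Kx^\star-\tau y$. The key observation is that $\bar v-\bar x=0$ and $K\bar x-\bar w=0$, so setting $\bar{\bz}^+:=\bar{\bz}=\bz^\star(y)$ is consistent with the identity \eqref{w-uv}. Thus $\bz^\star(y)$ plays the role of a \emph{formal} fixed point in the ensuing algebra, even though in general it need not belong to $\Fix T_P$ for arbitrary $y$.

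Next, from $x_n=\prox_{\tau g}\bigl(v_n-\tau K^*(\sigma Kv_n-u_n/\tau)\bigr)$ and $w_n=\prox_{f/\sigma}\bigl(Kv_n+Kx_n-u_n/\gamma\bigr)$, the characterization \eqref{property-p} tested at $x^\star$ and $w^\star$ respectively yields
\[
\tau\bigl(g(x_n)-g(x^\star)\bigr)\leq\langle v_n-x_n,x_n-x^\star\rangle-\langle\gamma Kv_n-u_n,Kx_n-Kx^\star\rangle,
\]
\[
\tau\bigl(f(w_n)-f(w^\star)\bigr)\leq\gamma\langle Kv_n+Kx_n-w_n,w_n-w^\star\rangle-\langle u_n,w_n-w^\star\rangle.
\]
Summing these and adding $\tau\langle y,Kx_n-w_n\rangle$ to both sides---splitting the latter as $\tau\langle y,K(x_n-x^\star)\rangle-\tau\langle y,w_n-w^\star\rangle$ via $Kx^\star=w^\star$---the left-hand side becomes exactly $\tau J(x_n,w_n,y)$, while the right-hand side coincides term for term with the sum of the right-hand sides of \eqref{eq:mono A}--\eqref{eq:mono B} under the substitutions above. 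This identification relies on the simplification $\gamma K\bar v-\bar u=\tau y$.

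Finally, the algebraic manipulations \eqref{eq:ip1}--\eqref{eq:final}, together with the concluding $\widehat G-M+\gamma\widehat M=Q$ calculation in the proof of Proposition~\ref{l:T_A ne}, consist of identities that do not invoke any property of $A$ or $B$. Applying them verbatim to the inequality above, and using $\bar{\bz}-\bar{\bz}^+=0$ to annihilate every ``bar-difference'' term, yields precisely \eqref{sec43lem}. The only delicate point is bookkeeping the $y$-dependent contributions from the two subgradient inequalities so that they cancel correctly with $\tau\langle y,Kx_n-w_n\rangle$ and recombine into the single clean term $\tau J(x_n,w_n,y)$; beyond this, the argument is a direct transcription of Proposition~\ref{l:T_A ne}.
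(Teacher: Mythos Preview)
Your proposal is correct and follows essentially the same approach as the paper. The paper's Appendix~\ref{proof:lem-opt} re-derives the identities \eqref{eq:ip1}--\eqref{eq:ip34} in the specialized form \eqref{eq:1-opt}--\eqref{eq:4-opt}, whereas you frame the argument as a direct transcription of the proof of Proposition~\ref{l:T_A ne} with $\bar{\bz}=\bar{\bz}^+=\bz^\star(y)$ playing the role of a formal fixed point (valid because $\bar v-\bar x=0$ and $K\bar x-\bar w=0$ make \eqref{w-uv} hold trivially); the substance is identical.
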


The proof of this lemma is analogous to that of Proposition \ref{l:T_A ne}. For completeness, we do not omit it but instead relegate it  to Appendix \ref{proof:lem-opt}.
Based on this lemma, we next establish an ${\cal O}(1/N)$ ergodic convergence rate result for Algorithm \ref{alg-os} in solving \eqref{two-block}, or equivalently \eqref{two-block2}.

\begin{theorem}[Sublinear convergence rate]\label{thm22}
Let  $\{(x_n,v_n, y_n)\}_{n\in\N}$ be the sequence generated by Algorithm \ref{alg-os} with $A=\partial g$ and $B=\partial f$.
Then, there exists a constant $C> 0$ such that for any $N\geq 1$ we have $|\Phi({\hat x}_N,{\hat w}_N)-\Phi({x^{\star}},{w^{\star}})| \leq C/N$
and $\|K{\hat x}_N-{\hat w}_N\| \leq   (2C/c) /N$,
where $c>0$ is a constant satisfying  $c \geq 2\|{y^{\star}}\|$, ${\hat x}_N :=\frac{1}{N}\sum_{n=1}^N x_n$ and
${\hat w}_N :=\frac{1}{N} \sum_{n=1}^N w_n$.
\end{theorem}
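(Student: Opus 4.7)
The plan is to upgrade the one-step estimate in Lemma \ref{lem-opt} to an ergodic bound via telescoping and Jensen's inequality, and then use the standard primal-dual trick in which the dual argument $y$ is chosen to decouple the objective gap from the constraint residual.

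First I would fix $y \in \cG$ and sum the inequality of Lemma \ref{lem-opt} for $n=1,\dots,N$. Telescoping the $\|\bz_n - \bz^\star(y)\|_M^2$ terms and discarding the nonnegative quantities $\|\bz_{N+1}-\bz^\star(y)\|_M^2$ and $\sum_{n=1}^N\|\bz_{n+1}-\bz_n\|_Q^2$ yields
\[
2\tau \sum_{n=1}^N J(x_n, w_n, y) \leq \|\bz_1 - \bz^\star(y)\|_M^2.
\]
Since $(x,w) \mapsto J(x,w,y)$ is convex (as $\Phi$ is convex and the pairing $\langle y, Kx-w\rangle$ is linear), Jensen's inequality gives $N\, J(\hat x_N, \hat w_N, y) \leq \sum_{n=1}^N J(x_n, w_n, y)$, and hence
\[
J(\hat x_N, \hat w_N, y) \leq \frac{\|\bz_1 - \bz^\star(y)\|_M^2}{2\tau N}.
\]

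The next step is to control $\|\bz_1 - \bz^\star(y)\|_M^2$ uniformly on the ball $\{y \in \cG : \|y\|\leq c\}$. With $M = \Diag(\theta^{-1}I_{\cH}, (\eta\gamma)^{-1}I_{\cG})$ and $u^\star(y) = \gamma K v^\star - \tau y$, expanding $\|u_1 - u^\star(y)\|^2$ through $\|a+b\|^2 \leq 2\|a\|^2 + 2\|b\|^2$ furnishes a constant $D_0 > 0$, depending only on the initial data $(v_1,u_1)$, the saddle point coordinate $v^\star$, the parameters $(\theta,\eta,\gamma)$, and $c$, such that $\|\bz_1 - \bz^\star(y)\|_M^2 \leq D_0$. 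Setting $C := D_0/(2\tau)$, one obtains $J(\hat x_N, \hat w_N, y) \leq C/N$ for every $y$ with $\|y\| \leq c$.

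Finally, I carry out the primal-dual dichotomy. Choosing $y = c\,(K\hat x_N - \hat w_N)/\|K\hat x_N - \hat w_N\|$ (the case $K\hat x_N = \hat w_N$ being trivial) produces
\[
\Phi(\hat x_N, \hat w_N) - \Phi(x^\star, w^\star) + c\|K\hat x_N - \hat w_N\| \leq C/N.
\]
For the matching lower bound, convexity of $g$ and $f$ together with $-K^* y^\star \in \partial g(x^\star)$, $y^\star \in \partial f(w^\star)$, and $K x^\star = w^\star$ yields $\Phi(\hat x_N, \hat w_N) - \Phi(x^\star, w^\star) \geq -\|y^\star\|\,\|K\hat x_N - \hat w_N\|$. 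Adding these two displays and using $c \geq 2\|y^\star\|$ (so that $c - \|y^\star\| \geq c/2$) gives $\|K\hat x_N - \hat w_N\| \leq (2C/c)/N$; substituting back into both inequalities then yields $|\Phi(\hat x_N, \hat w_N) - \Phi(x^\star, w^\star)| \leq C/N$ after possibly enlarging $C$. The only nontrivial point I anticipate is absorbing the $y$-dependence of $\|\bz_1 - \bz^\star(y)\|_M^2$ into a single constant uniform over $\|y\| \leq c$; the remaining ingredients — the telescoping, Jensen's inequality, and the choice of dual direction — are by now routine for ergodic analyses of primal-dual splitting schemes.
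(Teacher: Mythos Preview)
Your proposal is correct and follows essentially the same route as the paper: telescope Lemma \ref{lem-opt}, apply Jensen's inequality to pass to the ergodic iterate, bound $\|\bz_1-\bz^\star(y)\|_M^2$ uniformly over $\|y\|\le c$ to define $C$, and then combine the resulting inequality with the saddle-point lower bound $\Phi(x^\star,w^\star)-\Phi(\hat x_N,\hat w_N)\le \|y^\star\|\,\|K\hat x_N-\hat w_N\|$. One small remark: the hedge ``after possibly enlarging $C$'' is unnecessary, since substituting $\|K\hat x_N-\hat w_N\|\le (2C/c)/N$ back into the lower bound already gives $\Phi(x^\star,w^\star)-\Phi(\hat x_N,\hat w_N)\le \|y^\star\|\cdot (2C/c)/N\le C/N$ directly, matching the same constant $C$ used for the upper bound.
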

\begin{proof}
Recall that $Q\succeq 0$. Thus, $\|\bz_{n+1}-\bz_{n}\|_Q^2 \geq 0$ for all $n\geq 1$. For any $y\in \cG$, Lemma \ref{lem-opt} implies that
$2 \tau J(x_n,w_n,y)\leq \|\bz_{n}-\bz^\star(y)\|_M^2 - \|\bz_{n+1}-\bz^\star(y)\|_M^2$, a sum of which over
$n = 1, \ldots, N$ yields
\be
 2\tau\sum\nolimits_{n=1}^N  J(x_n,w_n,y) \leq \|\bz_{1}-\bz^\star(y)\|_M^2 - \|\bz_{N+1}-\bz^\star(y)\|_M^2 \leq
 \|\bz_{1}-\bz^\star(y)\|_M^2.
\label{thm2-1}
\ee
Since $J(x,w,y)$ is convex in $x$ and $w$, it follows from the definition of $({\hat x}_N, {\hat w}_N)$ and Jensen's inequality that
$J\big({\hat x}_N,{\hat w}_N,y\big) \leq {1  \over N} \sum_{n=1}^N  J(x_n,w_n,y)$.
Combining this with \eqref{thm2-1} and considering the definition  of $J(\cdot)$ in \eqref{def:G}, we obtain
\be\label{ineq_them3.2}
\Phi({\hat x}_N,{\hat w}_N)+\langle y, K {\hat x}_N-{\hat w}_N\rangle-\Phi({x^{\star}},{w^{\star}})\leq \|\bz_{1}-\bz^\star(y)\|_M^2 / (2\tau N).
\ee
By taking the maximum of both sides of \eqref{ineq_them3.2} over $\|y\|\leq c$ and defining $C := \sup_{y} \{\|\bz_{1}-\bz^\star(y)\|_M^2: \,  \|y\|\leq c\} / (2\tau) > 0$, we obtain
\be\label{key-result}
\Phi({\hat x}_N,{\hat w}_N)+c\|K {\hat x}_N-{\hat w}_N\|-\Phi({x^{\star}},{w^{\star}})\leq C/N,
\ee
which implies $\Phi({\hat x}_N,{\hat w}_N)-\Phi({x^{\star}},{w^{\star}})\leq C/N$.
Furthermore, since ${\cal L}({x^{\star}},{w^{\star}},{y^{\star}}) \leq {\cal L}({\hat x}_N,{\hat w}_N,{y^{\star}})$, $K{x^{\star}} = {w^{\star}}$ and $\|{y^{\star}}\| \leq c/2$, we have
\begin{eqnarray}\label{jy-5}
\Phi({x^{\star}},{w^{\star}})-\Phi({\hat x}_N,{\hat w}_N)&\leq&\langle {y^{\star}}, K{\hat x}_N-{\hat w}_N\rangle
 \leq (c/2)\|K{\hat x}_N-{\hat w}_N\|,
\end{eqnarray}
which together with \eqref{key-result} implies
\begin{eqnarray*}
c\|K{\hat x}_N-{\hat w}_N\|&\leq& \Phi({x^{\star}},{w^{\star}}) - \Phi({\hat x}_N,{\hat w}_N)+  C / N
\leq  (c/2)\|K{\hat x}_N-{\hat w}_N\|+  C / N.
\end{eqnarray*}
As a result, we derive $\|K{\hat x}_N-{\hat w}_N\|\leq 2C/(cN)$. It then follows from \eqref{jy-5} that
$\Phi({x^{\star}},{w^{\star}})-\Phi({\hat x}_N,{\hat w}_N)\leq C/N$, and thus
$|\Phi({\hat x}_N,{\hat w}_N)-\Phi({x^{\star}},{w^{\star}})| \leq C/N$.
The proof is completed.
\end{proof}

\section{Further discussions}
\label{sec:discussion}
Algorithm \ref{alg-os} requires that $\gamma\|K\|^2 < (2-\theta)(2-\eta)$.
This section investigates the limiting case of the parameters in Algorithm \ref{alg-os}, where $\gamma\|K\|^2=(2-\theta)(2-\eta)$, under the uniform monotonicity assumption of operator $A$. We extend the analysis of operator $P$ in \eqref{def:T} by considering its transformation from a diagonal structure $P=\diag(\theta I_{\cH}, \eta\gamma I_{\cG})$ to a nondiagonal form, leveraging the flexibility provided by Proposition \ref{l:T_A ne}. Furthermore, we explore heuristic strategies for choosing parameters $\theta$ and $\eta$ to improve numerical performance.

\subsection{Limiting case analysis under uniform monotonicity of $A$}
\label{sec: limiting case}
In this subsection, we analyze the limiting case  where   $\gamma\|K\|^2=(2-\theta)(2-\eta)$.
In this case, the operator $Q$ loses its positive definiteness and only remains positive semidefinite. Under this scenario, we cannot deduce $\lim_{n\to\infty}\|\bz_{n+1}-\bz_n\|=0$ from $\sum_{n \in\N}\|\bz_{n+1}-\bz_n\|_Q^2\leq \|\bz_0-\bz^\star\|_M^2$. In the remainder of this subsection, we assume that the operator $A$ is uniformly monotone with modulus $\phi: \R_+ \rightarrow [0,+\infty]$, i.e., it satisfies
\ben
\langle x-y,  u-v\rangle \geq \phi (\|x-y\|), ~~\forall (x,u), (y,v)\in \gra A,
\een
where $\phi$ is strictly increasing and $\phi(0)=0$.
With this extra condition, we analyze convergence of Algorithm \ref{alg-os} under the limiting case:
\be\label{limit-case}
\theta\in(0,2), ~\eta\in(0,2)~~\mbox{and}~\gamma\|K\|^2=(2-\theta)(2-\eta).
\ee

\begin{theorem}\label{th:limit-case}
Let $ T_P$ be defined as in \eqref{def:T} with $P=\diag(\theta I_{\cH}, \eta\gamma I_{\cG})\succ0$ and let operator $A$ be uniformly monotone with modulus $\phi$.
Assume that   $\Fix  T_P \neq \varnothing$ and the condition \eqref{limit-case} is satisfied. Let $\{\bz_n:=(v_n,u_n)^\top\}_{n \in\N}$ be generated by
$\bz_{n+1} = T_{P} (\bz_n)$ with any $\bz_0 \in \cH\times \cG.$
Then the following assertions hold.
\begin{enumerate}[(a)]
\item The sequence $\{v_n\}_{n \in\N}$ converges strongly to {a point   $v^\star \in\zer\left(A+K^*BK\right)$.}
\item The sequence $\{\bz_n\}_{n \in\N}$ converges weakly to a point $\bz^\star\in\Fix T_P$.
\end{enumerate}
\end{theorem}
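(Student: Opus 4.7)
The plan is to first sharpen Proposition \ref{l:T_A ne} by exploiting the uniform monotonicity of $A$. In the original proof, the monotonicity of $A$ enters only through inequality \eqref{eq:mono A}, whose left-hand side $0$ can be replaced by $\tau\phi(\|x-\bar x\|)$; since every subsequent transformation up through \eqref{eq:ip34} is an identity and \eqref{eq:mono B} is unchanged, the extra term is carried through the final doubling to yield the enhanced estimate
\ben
\|T_P(\bz)-T_P(\bar\bz)\|_M^2 + \|(I-T_P)(\bz)-(I-T_P)(\bar\bz)\|_Q^2 + 2\tau\phi(\|x-\bar x\|) \leq \|\bz-\bar\bz\|_M^2.
\een
Specializing to $\bar\bz = \bz^\star \in \Fix T_P$ (so that the associated $\bar x$ equals $v^\star$ by Lemma \ref{l:fixed points}) and telescoping in $n$ yields summability of both $\|\bz_{n+1}-\bz_n\|_Q^2$ and $\phi(\|x_n - v^\star\|)$. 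Since $\phi$ is strictly increasing with $\phi(0)=0$, this forces $\|x_n - v^\star\| \to 0$ strongly. Part (a) then follows from the convex combination update $v_{n+1} - v^\star = (1-\theta)(v_n - v^\star) + \theta(x_n - v^\star)$: because $\theta \in (0,2)$ implies $|1-\theta| < 1$, a routine linear-recurrence argument driven by the null sequence $\{x_n - v^\star\}$ forces $v_n \to v^\star$ strongly.

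For part (b), the enhanced inequality also delivers Fej\'er monotonicity of $\{\bz_n\}$ in the $M$-norm, hence boundedness of $\{u_n\}$ and of $\{\Delta u_n := u_{n+1}-u_n\}$. The key step, and the principal obstacle, is to upgrade $\|\bz_{n+1}-\bz_n\|_Q \to 0$ to $\|\bz_{n+1}-\bz_n\| \to 0$; this is nontrivial precisely because $Q$ becomes positive semidefinite but singular in the limiting case \eqref{limit-case}. I would expand explicitly, using $Q = P^{-1}(\Phi_K - P)P^{-1}$ with $P = \diag(\theta I_{\cH}, \eta\gamma I_{\cG})$,
\ben
\|\bz_{n+1}-\bz_n\|_Q^2 = \tfrac{2-\theta}{\theta^2}\|\Delta v_n\|^2 + \tfrac{2}{\theta\eta}\langle K\Delta v_n, \Delta u_n\rangle + \tfrac{2-\eta}{\eta^2\gamma}\|\Delta u_n\|^2.
\een
By part (a) we have $\Delta v_n \to 0$ strongly, and $\{\Delta u_n\}$ is bounded; Cauchy--Schwarz then shows that the first two terms on the right-hand side tend to zero. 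Since the left-hand side tends to zero as well, the third term must vanish, giving $\Delta u_n \to 0$ strongly and therefore $\|\bz_{n+1}-\bz_n\| \to 0$.

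With $(I-T_P)(\bz_n) = \bz_n - \bz_{n+1} \to 0$ strongly, the generalized Browder demiclosedness principle (valid for $T_P$ by Remark \ref{rem-tight}(iii)) applied to any weakly convergent subsequence $\bz_{n_k} \rightharpoonup \bar\bz$ gives $\bar\bz \in \Fix T_P$. Combined with Fej\'er monotonicity in the $M$-inner product, Opial's lemma then rules out multiple weak cluster points and produces the unique weak limit claimed. The decisive ingredient throughout is really part (a): it is the strong convergence of $v_n$ that compensates for the degeneracy of $Q$ and allows $\|\Delta u_n\|\to 0$ to be extracted from the singular quadratic form, without which the demiclosedness/Opial machinery could not be invoked in the limiting regime.
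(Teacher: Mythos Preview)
Your proposal is correct and follows essentially the same approach as the paper: both sharpen Proposition~\ref{l:T_A ne} using uniform monotonicity of $A$ to obtain the extra $\phi$-term, deduce $x_n\to v^\star$ strongly, push this through the convex-combination recursion to get $v_n\to v^\star$, and then use $\Delta v_n\to 0$ to extract $\Delta u_n\to 0$ from the vanishing $Q$-seminorm before invoking demiclosedness and Opial/Fej\'er. The only cosmetic difference is in the extraction step: the paper argues via the second block row of $(\Phi_K-P)M(\bz_{n+1}-\bz_n)\to 0$, whereas you expand the scalar quadratic form $\|\Delta\bz_n\|_Q^2$ directly and isolate the $\tfrac{2-\eta}{\eta^2\gamma}\|\Delta u_n\|^2$ term; these are equivalent and both hinge on the same input $\Delta v_n\to 0$. (Your factor $2\tau\phi$ is in fact the more careful accounting, since the graph inclusion involves $\tau A$.)
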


\begin{proof}
(a). Recall that $P=\diag(\theta I_{\cH}, \eta\gamma I_{\cG})\succ0$, $M=P^{-1} \succ0$, and $\Phi_K$ is defined in \eqref{def:Phi}.
Following the proof of Proposition \ref{l:T_A ne} while incorporating the uniform monotonicity of $A$ instead of monotonicity, we obtain
\ben
 \|T_P(\bz)-T_P(\bar{\bz})\|_M^2 + \|(I-T_P)(\bz)-(I-T_P)(\bar{\bz})\|_Q^2+2\phi(\|x-\bar{x}\|)
  \leq \|\bz-\bar{\bz}\|_M^2,
\een
from which we can easily derive
\be\label{um-z}
 \|\bz_{n+1}-\bz^\star\|_M^2 + \|\bz_{n+1}-\bz_n\|_Q^2+2\phi(\|x_n-x^\star\|)
  \leq \|\bz_n-\bz^\star\|_M^2.
\ee
It then follows that $\{\|\bz_n-\bz^\star\|_M^2\}_{n \in\N}$ is nonincreasing and hence convergent.
Since $\phi$ is nonnegative, strictly increasing, and vanishes only at $0$, it is elementary to derive from \eqref{um-z} that $x_n\rightarrow x^\star\in\zer\left(A+K^*BK\right)$ as $n\rightarrow +\infty$.  That is,  the sequence $\{x_n\}_{n \in\N}$ converges to $x^\star$ strongly.
Combined with the relation $v_{n} =\theta x_{n-1}+(1-\theta)v_{n-1}$, this implies the strong convergence of $\{v_n\}_{n \in\N}$ to $x^\star$ as well.

(b). From \eqref{um-z}, we obtain $\sum_{n=0}^\infty \|\bz_{n+1}-\bz_n\|_Q^2\leq \|\bz_0-\bz^\star\|_M^2$, which implies
$\|\bz_{n+1}-\bz_n\|_Q^2\rightarrow 0$. Combining this with $Q=M^*\left(\Phi_K-P\right)M$ and $M = P^{-1}$, we obtain
\be\label{jy-2}
(\Phi_K-P)M(\bz_{n+1}-\bz_n)=\left[\ba{ccc}(2-\theta)I&~&\gamma K^*\\ \gamma K&~&\gamma(2-\eta)I\ea\right]\left(\ba{l}\frac{1}{\theta}(v_{n+1}-v_n)\\ \frac{1}{\eta\gamma}(u_{n+1}-u_n)\ea \right)\rightarrow 0.
\ee
Relation \eqref{jy-2} yields $\frac{\gamma}{\theta} K(v_{n+1}-v_n)+\frac{2-\eta}{\eta}(u_{n+1}-u_n)\rightarrow 0$.
Since  $v_{n+1}-v_n\rightarrow 0$, it follows that $u_{n+1}-u_n\rightarrow 0$.
Therefore, we obtain $(I-T_P)(\bz_n)\rightarrow 0$ strongly.
Note that $T_P$ is nonexpansive in the $M$-norm due to Proposition \ref{l:T_A ne}.
 Let $\bz'$ be any  weak cluster point of the bounded sequence $\{\bz_n\}_{n \in\N}$.
According to the Browder's demiclosedness principle \cite[Theorem 4.27]{Bauschke2011book}, which also holds for $T_P$   (see Remark \ref{rem-tight} (iii)),
we deduce that $I-T_P$ is demiclosed.
Thus, $(I-T_P)(\bz_n)\rightarrow 0$ implies that $\bz'\in \Fix T_P$.
By applying \cite[Theorem 5.5]{Bauschke2011book}, we conclude that the sequence $\{\bz_n\}_{n \in\N}$ converges weakly to an element in $\Fix T_P$.
\end{proof}

\subsection{A nondiagonal choice of $P$}
As established in Lemma \ref{lem-a}, Algorithm \ref{alg-os} corresponds to a fixed-point iteration of the operator $T_P$ defined in \eqref{def:T} with
$P=\Diag(\theta I_{\cH},\eta\gamma I_{\cG})$. While the diagonal structure of $P$ is commonly used, the extended firmly nonexpansive property of $T_P$ given in \eqref{eq:T_A_ne} -- crucial for the convergence results in  Theorem \ref{th:main} -- depends only on the condition $0\prec P\prec \Phi_K$, as demonstrated by Proposition \ref{l:T_A ne} and Theorem \ref{th:main}.
This broader condition offers significant flexibility in selecting $P$ to ensure convergence of the iteration.
Motivated by this observation, in this subsection we investigate the following $2$-by-$2$ but nondiagonal choice for $P$:
\be\label{def:P}
P=\left[\ba{cc}\theta I~&~ \gamma K^* \\  \gamma K ~&~\eta\gamma I\ea\right].
\ee
Analogous to the parameter set $\Theta$ defined in \eqref{def:parameter-set}, we introduce $\widehat{\Theta}$ as follows:
\ben
\widehat{\Theta}:= \{(\theta,\eta, \gamma)~|~ \theta \in (0,2), \, \eta  \in (0,2), \, \gamma\in (0,\infty), \, \gamma \|K\|^2<\theta\eta\}.
\een
It is easy to observe that $0\prec P\prec \Phi_K$  for any $(\theta,\eta,\gamma)\in\widehat{\Theta}$. When the nondiagonal operator $P$ defined in \eqref{def:P} is adopted in the fixed-point iteration of $\bz_{n+1} = T_P (\bz_n)$, where $T_P$ is defined in \eqref{def:T}, we obtain the following modified PDSA. We emphasize that Algorithm 5.1 is a special case of the fixed-point iteration $\bz_{n+1} = T_P (\bz_n)$ for $n\geq 0$, and the results in Lemma \ref{l:fixed points} hold as well for the choice of $P$ in \eqref{def:P}.

\vskip5mm
\hrule\vskip2mm
\begin{algo}
[A modified PDSA]\label{alg-os-imp}
{~}\vskip 1pt {\rm
\begin{description}
\item[{\em Step 0.}] Choose $(\theta,\eta,\gamma)\in \widehat{\Theta}$, $\tau>0$, and set $\sigma=\gamma/\tau$. Initialize  $v_{0}\in \cH$, $u_0\in \cG$ and $n=0$.
\item[{\em Step 1.}] Compute $x_{n}$, $w_{n}$, $v_{n+1}$ and $u_{n+1}$ sequentially as follows:
\ben
x_n&=&J_{\tau A}(v_n - \tau K^*(\sigma Kv_n-u_{n}/\tau)), \\
w_n&=&J_{B/\sigma} \left(Kv_n+Kx_n-u_n/\gamma\right), \\
{v_{n+1}} &=& v_{n}+\theta(x_n-v_{n})+\gamma K^*(w_n-Kx_n), \\
u_{n+1}&=&u_{n}+\gamma K(x_n-v_{n}) +\eta\gamma \left(w_n-Kx_n\right).
\een
\item[{\em Step 2.}] Set $n\leftarrow n + 1$ and return to Step 1.
  \end{description}
}
\end{algo}
\vskip1mm\hrule\vskip5mm

For the operator $P$ defined in \eqref{def:P},  when the triple of parameters $(\theta,\eta,\gamma)$ lies within $\widehat{\Theta}$, the strict inequalities $0\prec P\prec \Phi_K$  hold. Consequently, the convergence results presented in Theorem \ref{th:main} are also applicable to Algorithm \ref{alg-os-imp}.
Compared to Algorithm \ref{alg-os}, this modified PDSA introduces additional  terms $\gamma K^*(w_n-Kx_n)$ and $\gamma K(x_n-v_{n})$ to the updates of $v_{n+1}$ and $u_{n+1}$, respectively, requiring one additional evaluation of the adjoint operator $K^*$ per iteration.   However, as demonstrated in   Figure \ref{Fig:LASSO-P}, this nondiagonal structure $P$ enables the algorithm converges faster, requiring fewer iterations.

\subsection{Adaptive adjustment heuristics for $\theta$ and $\eta$}
The convergence-guaranteeing condition presented in \eqref{def:parameter-set} is much broader than the conditions in CP-PDHG  \cite{Chambolle2011A} and GRPDA \cite{ChY2020Golden,ChY2022relaxed}. Specifically, both algorithms require  $\tau\sigma\|K\|^2$ be upper bounded. For CP-PDHG, the upper bound is $1$. For GRPDA, the upper bound is $(\sqrt{5}+1)/2$ in \cite{ChY2020Golden} and $4(\sqrt{2}-1)$ in \cite{ChY2022relaxed}. In contrast,  as indicated from the definition of $\Theta$ in \eqref{def:parameter-set}, this upper bound can be as large as $4$.
This flexible and improved condition is advantageous as it enables the joint tuning of the relaxation parameter $\eta$, the convex combination parameter $\theta$, and the step size parameter $\gamma$. Experimental results demonstrate that moderate over-relaxation paired with a relatively small $\theta$ is most efficient. Notably,  once $\eta$ and $\theta$ are determined, the optimal performance is mostly achieved by using the largest feasible step size $\gamma$.

As established in Proposition \ref{l:T_A ne}, the operator $T_P$ is
nonexpansive with respect to the $M$-norm. While this choice of norm preserves the standard convergence proof for the fixed-point algorithm, the selection of $P$ can significantly impact the numerical performance of Algorithm \ref{alg-os}.
To illustrate this point,  let us examine the technical details. Recall that for Algorithm \ref{alg-os} we have $M=P^{-1}=\diag(\theta^{-1}I_{\cH}, (\eta\gamma)^{-1}I_{\cG})$ and
\be\label{jy-3}
\|\bz_{n+1}-\bz_{n}\|_M^2=  \|v_{n+1}-v_n\|^2 / \theta +  \|u_{n+1}-u_n\|^2/(\eta\gamma).
\ee
This quantity converges to $0$ monotonically as the algorithm proceeds.
However,  for given $\gamma$ and $\eta$, if $\theta$ is very small, the weight of the first term on the right-hand-side of \eqref{jy-3}
would be much larger than that of the second. As a result, a small decrease in $\|v_{n+1}-v_n\|$ will readily imply a decent decrease in the norm  $\|\bz_{n+1}-\bz_{n}\|_M$ even if $\|u_{n+1}-u_n\|$ does not decrease much.
The same line of reasoning holds when, for a given $\gamma$, $\eta$ is much smaller than $\theta$.
In practice, adaptively selecting $\theta$ and $\eta$ such that the primal and dual residuals are balanced appears to be of utmost importance for attaining favorable numerical performance.

We will now outline our strategy for dynamically adjusting $\theta$ and $\eta$. Although we have not developed a comprehensive theory for this strategy, we have witnessed its effectiveness in diverse applications, as illustrated in Figure \ref{Fig:LASSO}.
Define the variable residuals  by $\texttt{vres}_n:=\|v_{n}-v_{n-1}\|$ and $\texttt{ures}_n:=\|u_{n}-u_{n-1}\|$, and set $r_n := \texttt{vres}_n/\texttt{ures}_n$. Then, we can dynamically adjust the values of $(\theta, \eta)$ to ensure that $\texttt{vres}_n$ and $\texttt{ures}_n$ are balanced. Specifically, we have implemented the following adaptive rule for determining $(\theta_n, \eta_n)$, which represents the value of $(\theta, \eta)$ at the $n$th iteration:
\bi
\item (i) If  $r_n \leq 4/5$, then set $\theta_n=\min\{5\theta_{n-1}/4,~ \overline{\theta}\}$ and $\eta_n=\varepsilon(2-\gamma \|K\|^2/(2-\theta_n))$.
\item (ii) If $r_n \geq 5/4$, then set $\eta_n=\min\{5\eta_{n-1}/4, ~\overline{\eta}\}$ and $\theta_n=\varepsilon(2-\gamma\|K\|^2/(2-\eta_n))$.
\item (iii) If $r_n \in (4/5, ~5/4)$, then set $(\theta_n,~ \eta_n)=(\theta_{n-1},~ \eta_{n-1})$.
\ei
Here, $\varepsilon\in (0,1)$, and $\overline{\theta}$ and $\overline{\eta}$ are upper bounds on $\theta$ and $\eta$, respectively.
In our experiments, we  set $\varepsilon=0.99$ and $\overline{\theta} = \overline{\eta}= 1.99$.
Note that an adaptive PDHG algorithm was proposed in \cite{Goldstein2015Adaptive}, which adaptively adjusts the step sizes $\tau$ and $\eta$ by balancing the primal and dual residuals. In contrast, our strategy tunes the convex combination parameter $\theta$ and the relaxation parameter $\eta$ by balancing the variable residuals $\texttt{vres}_n$ and $\texttt{ures}_n$.  While both approaches choose parameters adaptively, they tune different parameters using different rules.

\section{Numerical results}
\label{sec-experiments}
In this section, numerical experiments for three different cases are presented to assess the performance of the proposed algorithms: a benchmark example by He and Yuan \cite{He2014On}, a discrete total-variation regularized image denoising and inpainting problem, and a bilinear saddle point problem covering minimax matrix-game and LASSO problems. The experiments were carried out on a 64-bit Windows system with an Intel(R) Core(TM) i5-4590 processor (3.30 GHz) and 8 GB RAM, and all the results are reproducible by specifying the \texttt{seed} of the random number generator in the code accessible at \url{https://github.com/xkchang-opt/PDSA-ec}.

\subsection{He and Yuan's example}
\label{sec:he}
He and Yuan \cite{He2014On, He22On} demonstrated through numerous examples that the Arrow-Hurwicz splitting (AHS) method \cite{Uzawa58} may not converge, even when the step sizes are fixed as small constants. Here, we consider one such example in the form of the following primal-dual saddle point problem:
\begin{equation}\label{he_example}
\min_{x\in\mathbb{R}}\max_{y\in\mathbb{R}} ~~xy,
\end{equation}
which is apparently a special case of \eqref{saddle-point} with $g=0$, $f^*=0$ and $K=I$.
For this example, the AHS method does not converge, see \cite{He2014On} and also Figure \ref{Fig:example} below.
It is straightforward to observe that problem \eqref{he_example} has the unique saddle point $(x^*,y^*)=(0,0)$. When using the CP-PDHG method, the solution point $(0,0)$ can be obtained after just one iteration.
When we apply Algorithm \ref{alg-os} to the problem \eqref{he_example}, we obtain the following iterative scheme:
\ben
\left\{\ba{rcl}
v_{n} &=&\theta x_{n-1}+(1-\theta)v_{n-1},\\
x_n &=& v_{n} -\tau y_{n-1},\\
y_{n}&=&y_{n-1}+\eta\sigma x_{n}+\sigma\theta (x_n-v_n),
\ea\right.\een
where $\tau, \sigma>0$ and $\tau\sigma<(2-\theta)(2-\eta)$.

In Figure \ref{Fig:example}(a), we display the convergence behavior of AHS, CP-PDHG, PDAc, and Algorithm \ref{alg-os}, using the initial point $(x_0,y_0)=(1,1)$.
For PDAc, we set $\psi = 1.6$, $\tau = 1$, and $\sigma = 1.6$. For Algorithm \ref{alg-os}, we set $\theta = 1$ and $\eta = 0.99$.
It can be observed that Algorithm \ref{alg-os} converges to the solution point $(0,0)$ for the special example \eqref{he_example}.
By simple deduction, we also note that when $\theta=\eta=\tau=\gamma = 1$ for Algorithm \ref{alg-os}, its iteration point is the same as that of CP-PDHG. That is, starting from $(x_0,y_0)=(1,1)$, we have $(x_1,y_1)=(0,0)$, despite their different iteration schemes.

\begin{figure}[htp]
\centering
\subfigure[Results obtained from different methods.]{
\includegraphics[width=0.45\textwidth]{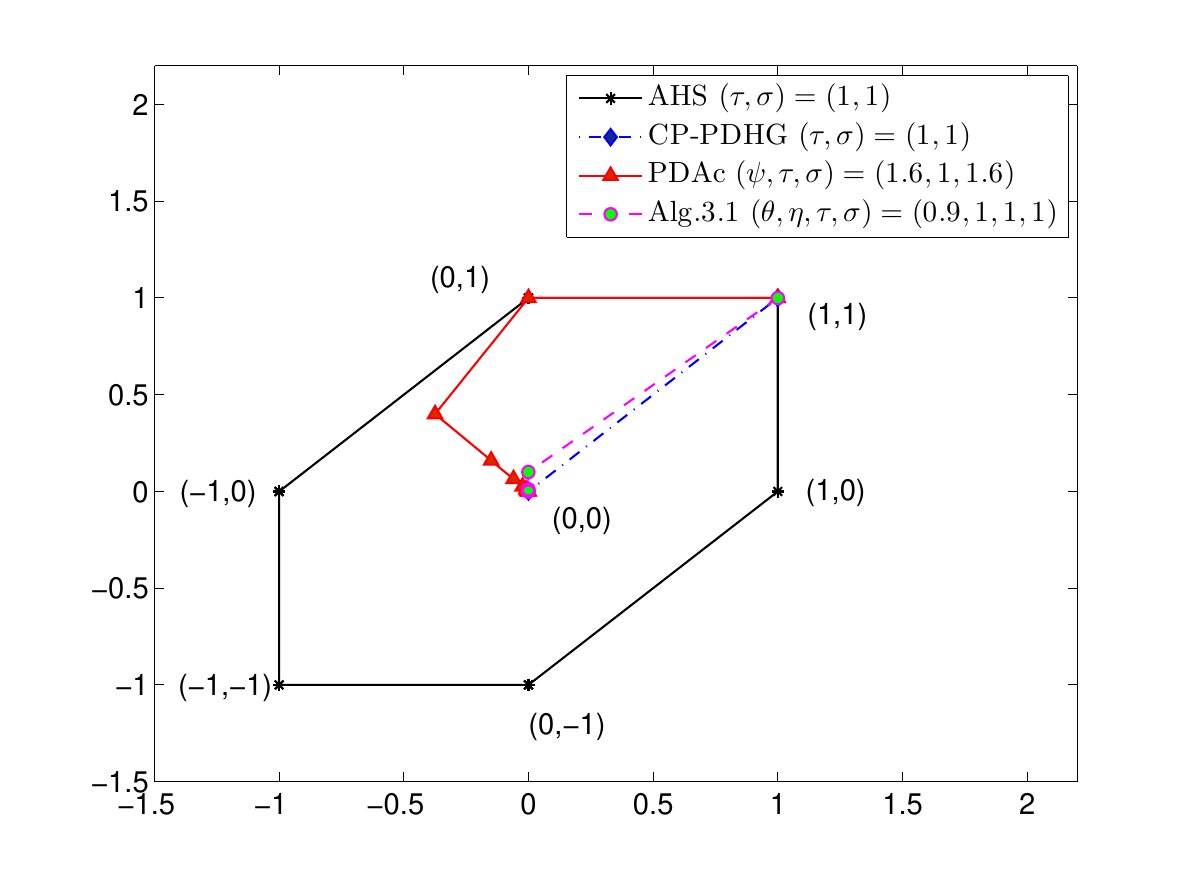}}
\subfigure[Results of Algorithm  \ref{alg-os-imp} for different combinations of $(\theta,\eta,\gamma)$.]{
\includegraphics[width=0.45\textwidth]{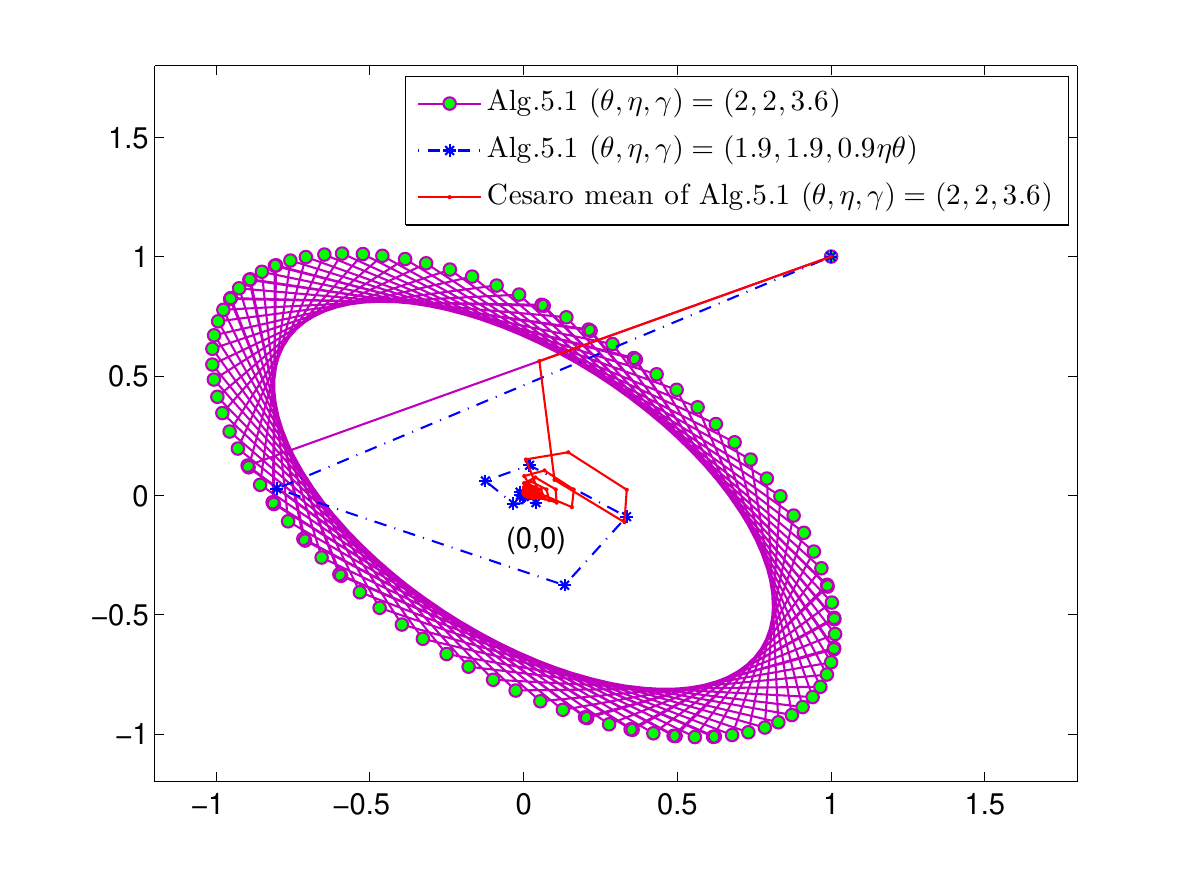}}
\caption{Results for the example \eqref{he_example} from He and Yuan \cite{He2014On}. }
\label{Fig:example}
\end{figure}

Recall that $\Phi_K$ is defined in \eqref{def:Phi}, and Algorithm \ref{alg-os-imp} corresponds to a fixed-point iteration of $T_P$ with $P$
defined in \eqref{def:P}.
We further tested Algorithm \ref{alg-os-imp} with $\tau = \sigma$ for solving problem \eqref{he_example}. The algorithm can be implemented as follows:
\ben
\left\{\ba{l}
x_{n}=v_{n}-\tau y_{n-1},\\
w_n= 0,\\
y_{n}=y_{n-1}+ \sigma [(\eta+\theta-1-\gamma) x_n+(1-\theta)v_{n}-(\eta-\gamma) w_n],\\
v_{n+1} =v_n+ (\theta-\gamma) x_{n}-\theta v_{n}+\gamma w_n.
\ea\right.
\een
The initial point was set to $(v_0,y_0)=(1,1)$, and two different sets of parameters were tested:
\begin{itemize}
    \item (i) $(\theta,\eta,\gamma)=(2,2,3.6)$, which satisfies $0\prec P=\Phi_K$.
    \item (ii)$(\theta,\eta,\gamma)=(1.9,1.9,0.9\theta\eta)$, which satisfies $0\prec P\prec\Phi_K$.
\end{itemize}
As is evident from Figure \ref{Fig:example}(b), under case (i) we have $Q=0$, and the sequence $\{(x_n,y_n)\}$ generated by Algorithm \ref{alg-os-imp} does not converge. In fact, we  conducted tests on Algorithm \ref{alg-os-imp} using different values of $\gamma$ within the interval $(0,4)$ and observed that, the iterates revolve outside an ellipse and display periodic behavior. However, under the condition in case (ii), Algorithm \ref{alg-os-imp} converges to the solution $(0, 0)$. Moreover,
 we plot the Cesaro mean of the sequence $\{(x_n,y_n)\}$ under case (i), which demonstrates converge to the solution $(0, 0)$.

Next, we provide an explanation regarding the aforementioned periodic behavior of Algorithm \ref{alg-os-imp} when $P=\Phi_K$.
Recall that Algorithm \ref{alg-os-imp} is a realization of the iteration
$\bz_{n+1}=T_P(\bz_n)$ with $T_P$  and $P$ defined in  \eqref{def:T} and \eqref{def:P}, respectively.
Recall that $\Phi_K$ is defined in \eqref{def:Phi}. Then, the condition $P=\Phi_K$ is equivalent to setting $\theta=\eta=2$ in \eqref{def:P}.
For problem \eqref{he_example}, $T_P$ is linear and can be represented by the following matrix:
\ben
T_P=\left(\ba{cc}
\gamma^2-3\gamma+1, \; & 2-\gamma\\
\gamma^2-2\gamma,   &  1-\gamma\ea\right).
\een
The two eigenvalues of $T_P$ are given by $\lambda(T_P):= {\gamma^2/2}-2\gamma+1\pm({\gamma/2}-1)\sqrt{\gamma(\gamma-4)}$.
Then, it is elementary to verify  that $|\lambda(T_P)|=1$ for any $\gamma\in(0,4)$.
This provides a reasonable explanation on the periodic behavior of the iterates generated by Algorithm \ref{alg-os-imp} with $P=\Phi_K$.
Furthermore, this also certifies the sharpness of the condition  $0\prec P\prec\Phi_K$ in ensuring the convergence of the fixed-point iteration $\bz_{n+1}=T_P(\bz_n)$.

\subsection{Discrete total-variation regularized image denoising problem}\label{sec6.2}
In this section, we consider image denoising problem.  As usual,  we denote images by vectors, instead of matrices.
Given a noisy image $f_0$ and a regularization parameter $\alpha > 0$, the
discrete total-variation (TV) regularized image denoising problem \cite{ROF1992}  can be formulated as
\be\label{dtv-pro}
\min_{x}\frac{1}{2}\|x-f_{0}\|_{2}^{2}+\alpha\|D x\|_{1},
\ee
where $D$ represents the discrete finite difference operator, see, e.g., \cite{WYYZ08}.
Problem \eqref{dtv-pro} can be equivalently recast as the saddle point problem \eqref{saddle-point}, with
\ben
g(x)=\frac{1}{2}\|x-f_0\|_{2}^{2}, ~~K=D~~\mbox{and}~~ f^*(y)=\iota_{\{\|y\|_\infty\leq\alpha\}}(y).
\een
Here, $\iota_C$ represents the indicator function of the set $C$. That is, $\iota_C(x) = 0$ if $x\in C$, and $\iota_C(x)=\infty$ otherwise.
The proximal operator of $\tau g$ is given by $\prox_{\tau g}(x) = (x+\tau f_{0})/(1+\tau)$ and $\prox_{f^*}$ is given by the Euclidean projection on to the $\ell_{\infty}$-norm ball $\|y\|_\infty\leq\alpha$.
In this experiment, we measure the optimality using the primal-dual gap function as defined in \cite{Bredies2015preconditioned,Chambolle2011A}, which in this context appears as
\begin{align*}
G(x,y)=\frac{1}{2}\|x - f_{0}\|_{2}^{2}+\alpha\|D x\|_{1}+\iota_{\{\|y\|_{\infty}\leq\alpha\}}(y)+\frac{1}{2}\|D^\top y\|_{2}^{2}+\langle D^\top y,f_{0}\rangle.
\end{align*}
To make the results independent of image size, in our experiments, we use the normalized primal-dual gap $ G(x,y)/(N_xN_y)$, where $N_x$ and $N_y$ are the dimensions of the image. The iteration process continues until the normalized primal-dual gap is less than a given $\epsilon>0$ or the number of iterations $n$ reaches $n_{\max}$, with $n_{\max}$ being the maximum number of allowed iterations. In this section, we set $n_{\max}=5\times10^{4}$.

We conducted tests on the Butterfly image (with dimensions $512\times768$) and the Barbara image (with dimensions $512\times512$). The original images were corrupted by Gaussian noise with a zero mean and a variance of $0.05$. The regularization parameters were chosen as $\alpha = 0.2$ and $\alpha = 0.5$, respectively.
For CP-PDHG, we set $\tau=\sigma$ and $\tau\sigma=1/L^2$, as in \cite{Bredies2015preconditioned}, where we set $L = \sqrt{8}$ as in \cite{Chambolle2011A}. We also tested the relaxed CP-PDHG given in \eqref{PDSA} with $\rho=1.5$.
For PDAc, we set $\psi = 1.6$, $\tau=\sigma$ and $\tau\sigma=1.6/L^2$.

To determine the parameters for Algorithm \ref{alg-os}, we initially conducted tests on various combinations of the relaxation parameter $\eta$ and the convex combination parameter $\theta$, both within the open interval $(0,2)$. Given that the function $g(x) = \frac{1}{2}\|x - f_0\|_2^2$ is strongly convex, its gradient operator $\nabla g$ is uniform monotone. Consequently, in accordance with the theoretical results presented in Section \ref{sec: limiting case}, we set $\gamma= (2 - \theta)(2 - \eta)/L^{2}$ and $\tau = \sigma = \sqrt{\gamma}$, ensuring the algorithm's convergence.
For this specific test, we performed computations on the Butterfly image, with the regularization parameter $\alpha$ set to $0.2$. Table \ref{table-th-eta} summarizes the iteration counts required for Algorithm \ref{alg-os} to satisfy the criterion $G(x_n,y_n) < 10^{-6}$ under different $\theta$ and $\eta$ values.

The experimental results reveal two key observations: First, when $\theta$ is fixed, the algorithm performs optimally with a moderate degree of over-relaxation, i.e., $\eta > 1$, but the value of $\eta$ should not be excessively large. Second, for a fixed $\eta$, the algorithm demonstrates a preference for choosing the convex combination parameter $\theta$ in the range of approximately $1/7$ to $1/5$. We further validated these findings by testing additional images with varying regularization parameters and stopping criteria, and the results remained consistent across different scenarios.
Based on these observations, we set $\theta = 1/5$, $\eta = 7/6$, $\gamma= (2 - \theta)(2 - \eta)/L^{2}= 1.5/L^2$, and $\tau=\sigma=\sqrt{\gamma}=\sqrt{1.5}/L$ or $(\tau, \sigma) = (1/L, 1.5/L)$ for Algorithm \ref{alg-os} in this TV denosing problem.  For cases where the uniform monotonicity of $\partial g$ is not guaranteed, we adjust the parameter $\theta$ to $\theta = 0.99/5$ and remain the choice $\gamma= 1.5/L^2$, ensuring that the convergence condition $\gamma L^2 < (2-\theta)(2-\eta)$ is met.

\begin{table}[htpb]
\caption{Iteration counts required for Algorithm \ref{alg-os} to achieve $ G(x_n,y_n) < 10^{-6}$ under different $\theta$ and $\eta$ for the TV image denoising problem (\ref{dtv-pro}). Here, $ \gamma =  (2 - \theta)(2 - \eta) / L^2$, $\tau = \sigma = \sqrt{\gamma}$, the regularization parameter is set as $\alpha = 0.2$, and the tested image is Butterfly. In the table, ``---" represents that the algorithm reached $2000$ iterations without satisfying the stopping condition.}\label{table-th-eta}\label{table-th-eta}
\center
\small
\begin{tabular}{c|cccccccccc}
\hline
\multirow{2}{*}{$\eta$}&\multicolumn{9}{c}{$\theta$}\\
&1/7 &  1/6 &  1/5  & 1/4  & 1/3  & 1/2 & 1& 7/6 &9/6 \\
\cline{1-10}
1/2 & 1871	& 1874	& 1881	& 1894	& 1924	& ---	& ---	& ---	& --- \\
1 & 1177	& 1178	& 1180	& 1186	&1202	& 1247	& 1489	& 1623	& ---\\
7/6 & \textbf{1115}	& \textbf{1115}	& \textbf{1117}	& 1122	& 1136	& 1178	& 1405	& 1530	& 1957	\\
9/6 & 1136	& 1136	& 1138	& 1143	& 1157	& 1198	& 1425	& 1550	& 1979	\\
11/6 & 1625	&  1628	& 1632	& 1640	& 1662	& 1718	& ---	& ---	& ---	\\
\hline
\end{tabular}
\end{table}

\begin{table}[htpb]
\caption{Numerical results for the TV image denoising problem (\ref{dtv-pro}). The CPU time consumed (denoted as Time) is measured in seconds, and the iteration number is denoted as Iter.}\label{table1}
\center
\small
\begin{tabular}{cccccccccccc}
\hline
\multirow{4}{*}{$\alpha$}&\multirow{4}{*}{$\epsilon$} & \multicolumn{10}{c}{Butterfly image }\\
\cline{3-12}
&&\multicolumn{2}{c}{CP-PDHG}&\multicolumn{2}{c}{CP-PDHG}& \multicolumn{2}{c}{PDAc} &\multicolumn{2}{c}{Alg. \ref{alg-os}} &\multicolumn{2}{c}{Alg. \ref{alg-os}} \\
&&\multicolumn{2}{c}{}&\multicolumn{2}{c}{(relaxed)}& \multicolumn{2}{c}{} &\multicolumn{2}{c}{($\tau=\sigma=\sqrt{1.5}/L$)} &\multicolumn{2}{c}{($\tau=1/L, \sigma=1.5/L$)} \\
\cline{3-12}
&  & Iter &Time & Iter &Time &Iter &Time&Iter &Time&Iter &Time\\
\hline
\multirow{3}{*}{$0.2$}& $10^{-5}$ & 337 &15.5  & 227 & 11.4 &281 &12.6&267 &12.1 & 226 & 9.8\\
&$10^{-6}$	& 1478 &67.9  & 995   &    48.5  & 1229 &55.1   & 1129  & 54.1  & 951   &    43.3  \\
&$10^{-7}$	&6747 & 316.3  &4517 &215.6    &5653 &260.0 & 5144   &    232.2 & 4286 &  197.8 \\
\hline
\multirow{3}{*}{$0.5$}& $10^{-4}$&619 &27.2  &412  & 17.7  &489 &20.6&428  & 19.5 & 351        & 14.9  \\
&$10^{-5}$	& 2679 &118.5   &1792 & 83.7  &2117  &92.9  &1894  & 80.8   & 1549  &68.5  \\
&$10^{-6}$	& 11284 &  474.5  & 7523 &  327.6 & 8919  &  395.5 & 7839 & 335.8 & 6552  & 282.2 \\
\hline
\hline
\multirow{4}{*}{$\alpha$}&\multirow{4}{*}{$\epsilon$}& \multicolumn{10}{c}{Barbara image}\\
\cline{3-12}
&&\multicolumn{2}{c}{CP-PDHG}&\multicolumn{2}{c}{CP-PDHG}& \multicolumn{2}{c}{PDAc} &\multicolumn{2}{c}{Alg. \ref{alg-os}} &\multicolumn{2}{c}{Alg. \ref{alg-os}}\\
&&\multicolumn{2}{c}{}&\multicolumn{2}{c}{(relaxed)}& \multicolumn{2}{c}{} &\multicolumn{2}{c}{($\tau=\sigma=\sqrt{1.5}/L$)} &\multicolumn{2}{c}{($\tau=1/L, \sigma=1.5/L$)} \\
\cline{3-12}
&  & Iter &Time & Iter &Time &Iter &Time& Iter &Time &Iter &Time\\
\hline
\multirow{3}{*}{$0.2$}& $10^{-5}$  &331 &10.3  & 222 & 6.9  &276 &8.2 &256  & 7.7 & 220 &        6.5\\
&$10^{-6}$  &1405 &43.7   & 954  &  30.0  &1181 &   34.8  &1077  & 31.9  & 901 & 26.2\\
&$10^{-7}$  &6455 &199.3  & 4313 & 135.2  &5329 &   158.9& 4804 & 150.9 &4085  &  119.5 \\
\hline
\multirow{3}{*}{$0.5$}& $10^{-4}$&529 &15.7  & 349 & 10.8  &419&11.9&370 & 10.7& 305 & 8.9\\
&$10^{-5}$	&2341&70.5  & 1520  &  45.9 &1849 &54.8&1602   & 45.2& 1311  & 38.9 \\
&$10^{-6}$ &8589 &259.1  & 5684 &  171.5   &6794  & 202.2&5928 & 171.1 &4962 & 146.3\\
\hline
\end{tabular}
\end{table}

\begin{figure}[htbp]
\centering
\includegraphics[width=0.23\textwidth]{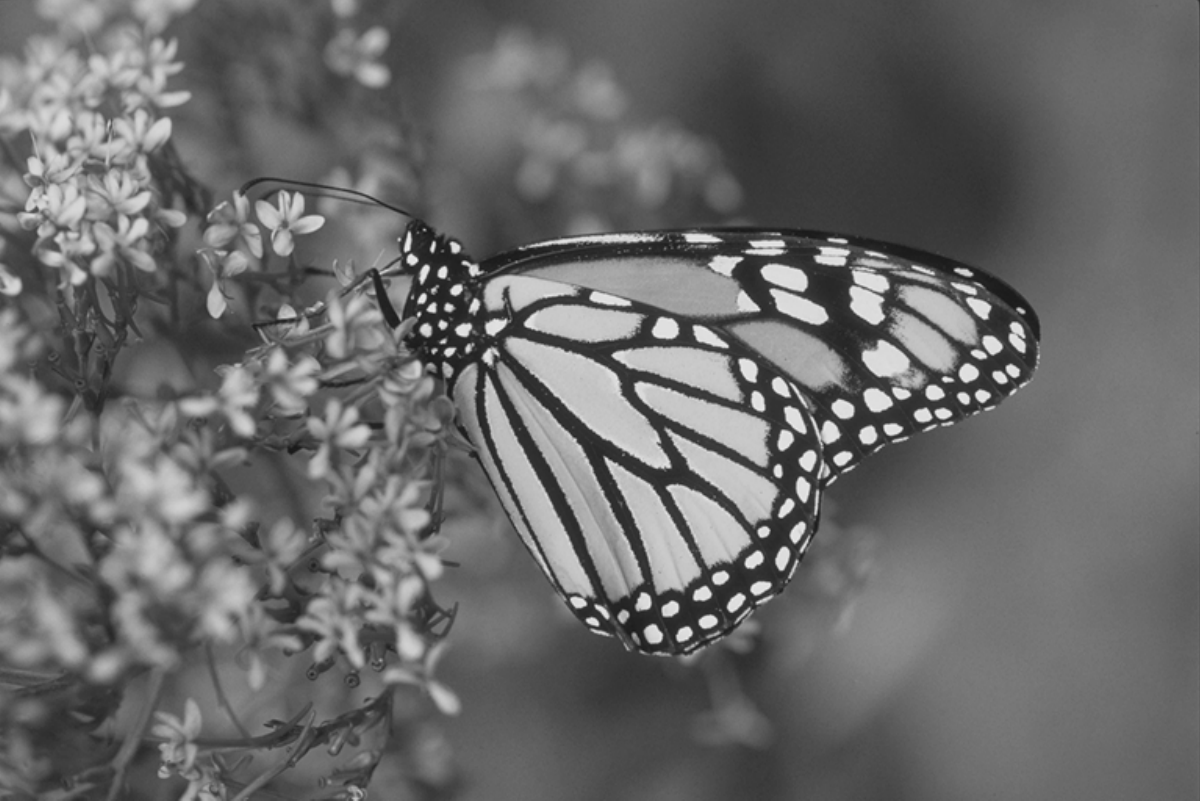}
\includegraphics[width=0.23\textwidth]{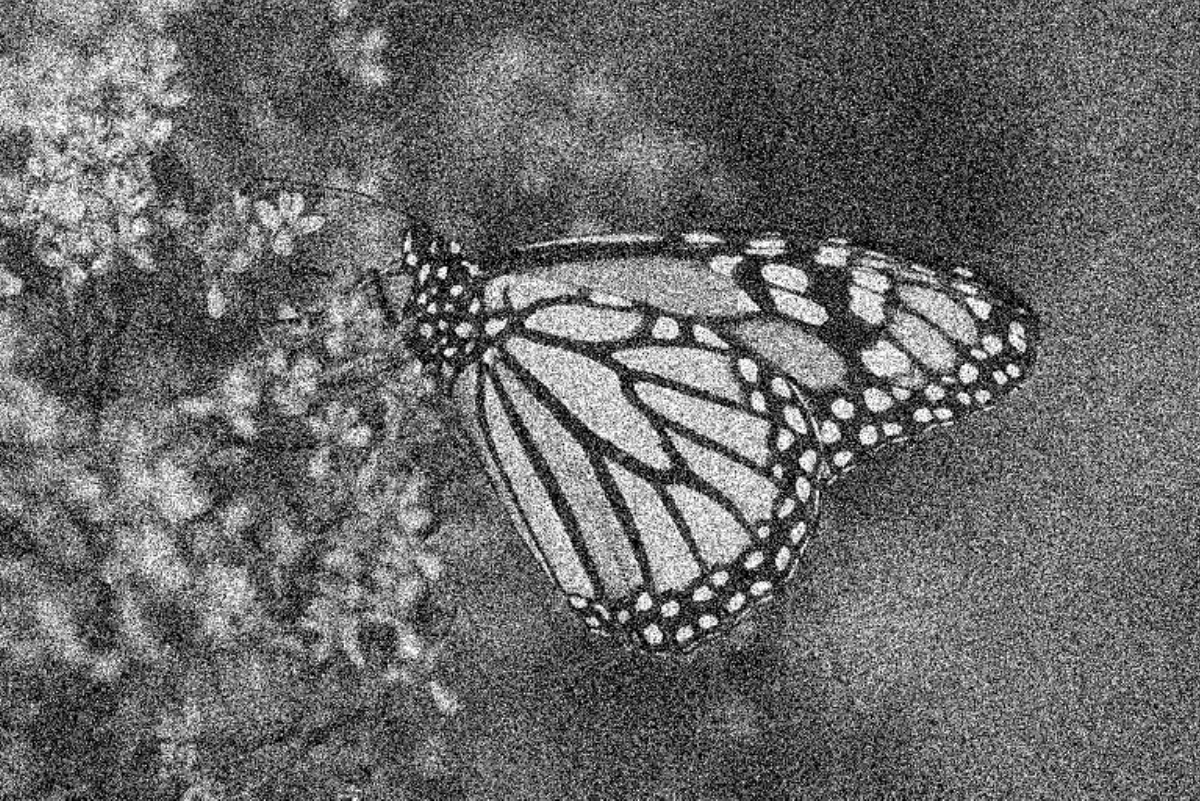}
\includegraphics[width=0.23\textwidth]{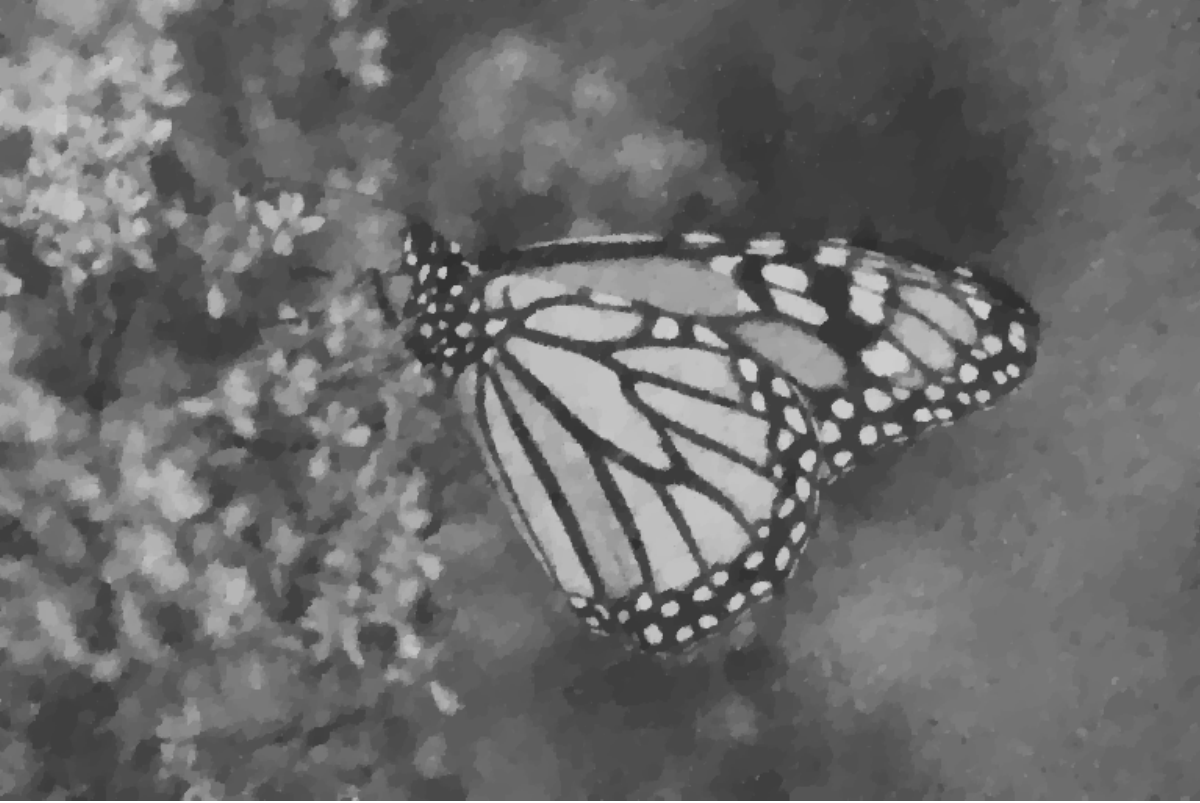}
\includegraphics[width=0.23\textwidth]{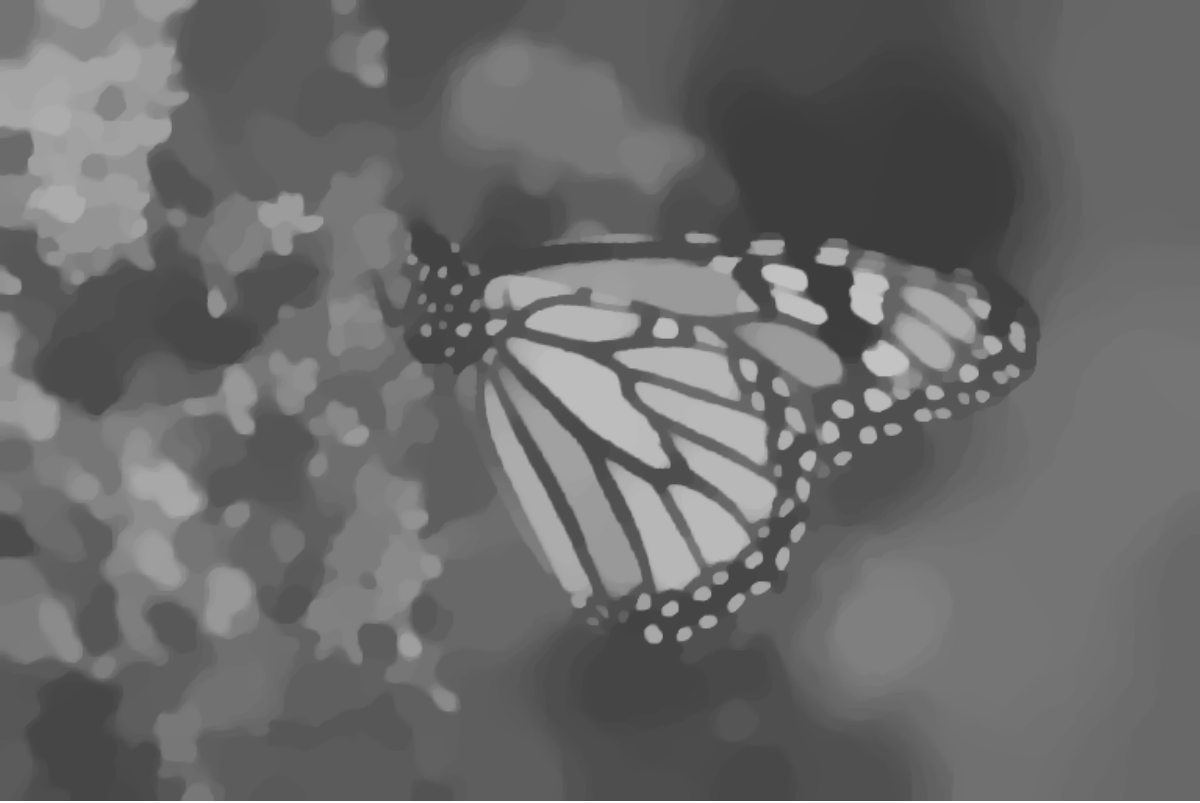}
\includegraphics[width=0.23\textwidth]{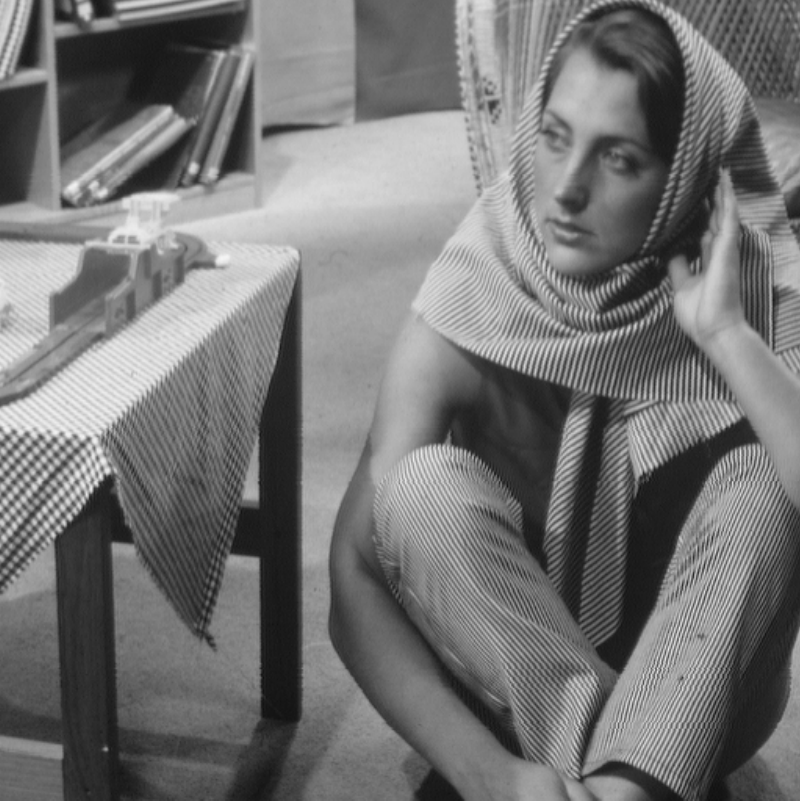}
\includegraphics[width=0.23\textwidth]{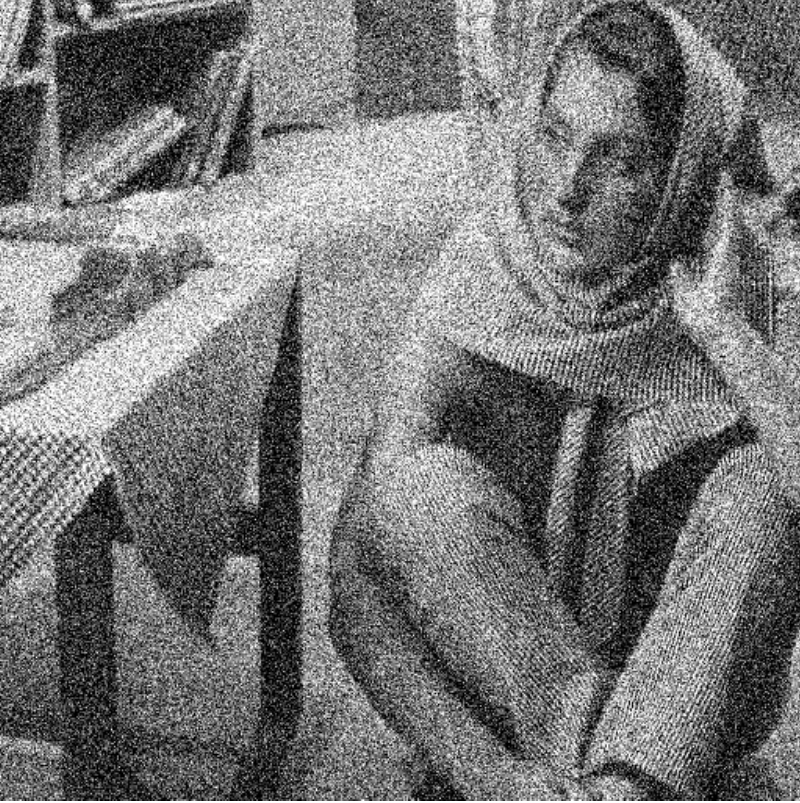}
\includegraphics[width=0.23\textwidth]{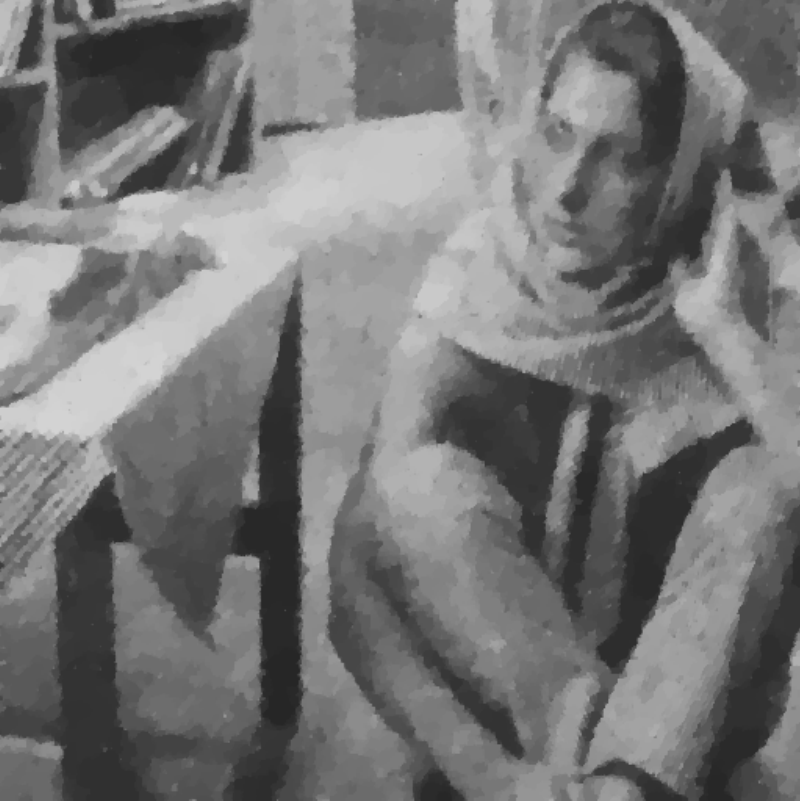}
\includegraphics[width=0.23\textwidth]{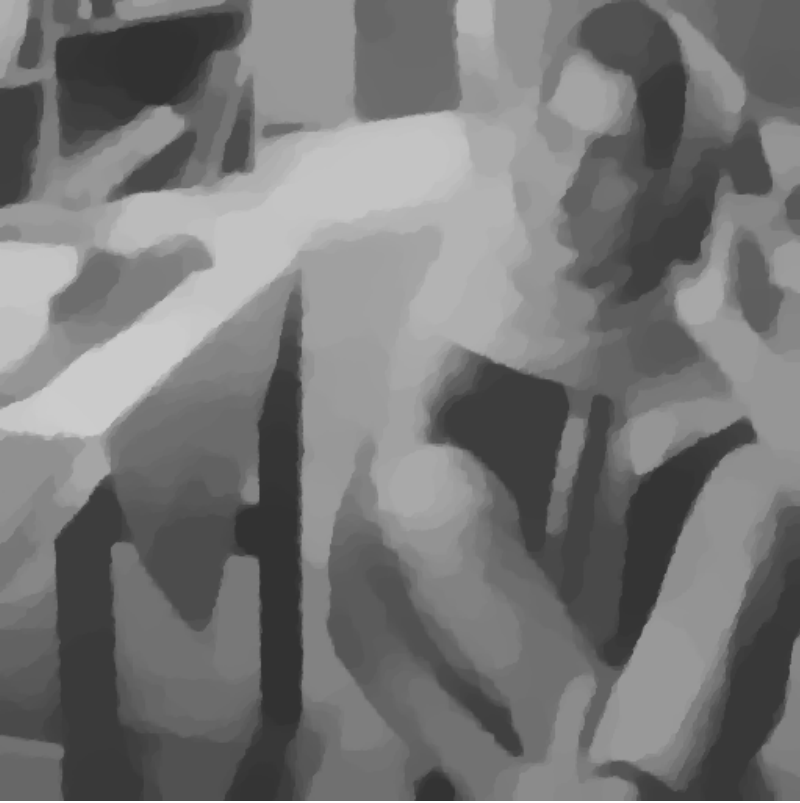}
\caption{The first column shows the original images, while the second column presents the input noisy images.
The final two columns, in turn, display the output images reconstructed by Algorithm \ref{alg-os} with
$\alpha$ values set to $0.2$ and $0.5$ in \eqref{dtv-pro}, respectively.}
\label{Fig:TV}
\end{figure}

\begin{figure}[htbp]
\centering
\subfigure[$\alpha = 0.2$]{
\includegraphics[width=0.43\textwidth]{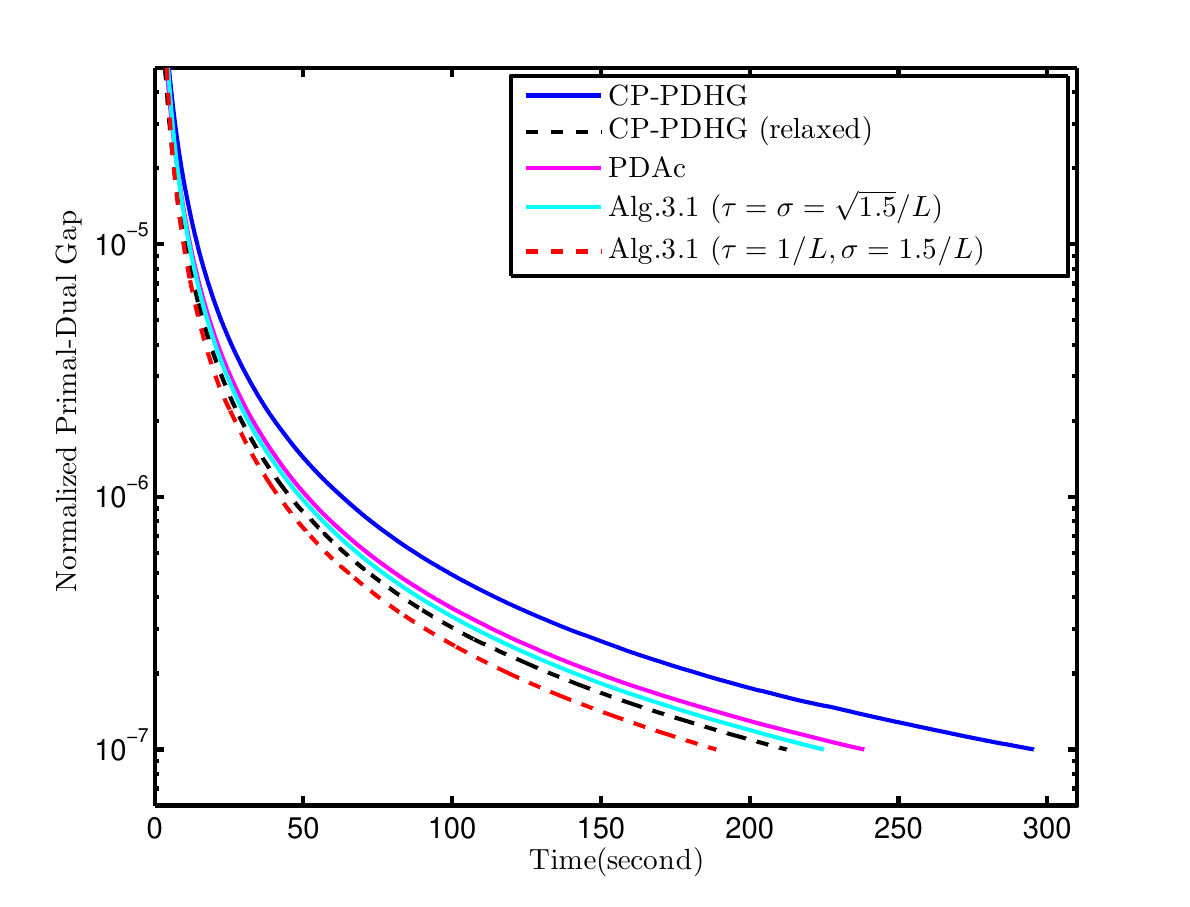}}
\subfigure[$\alpha = 0.5$]{
\includegraphics[width=0.43\textwidth]{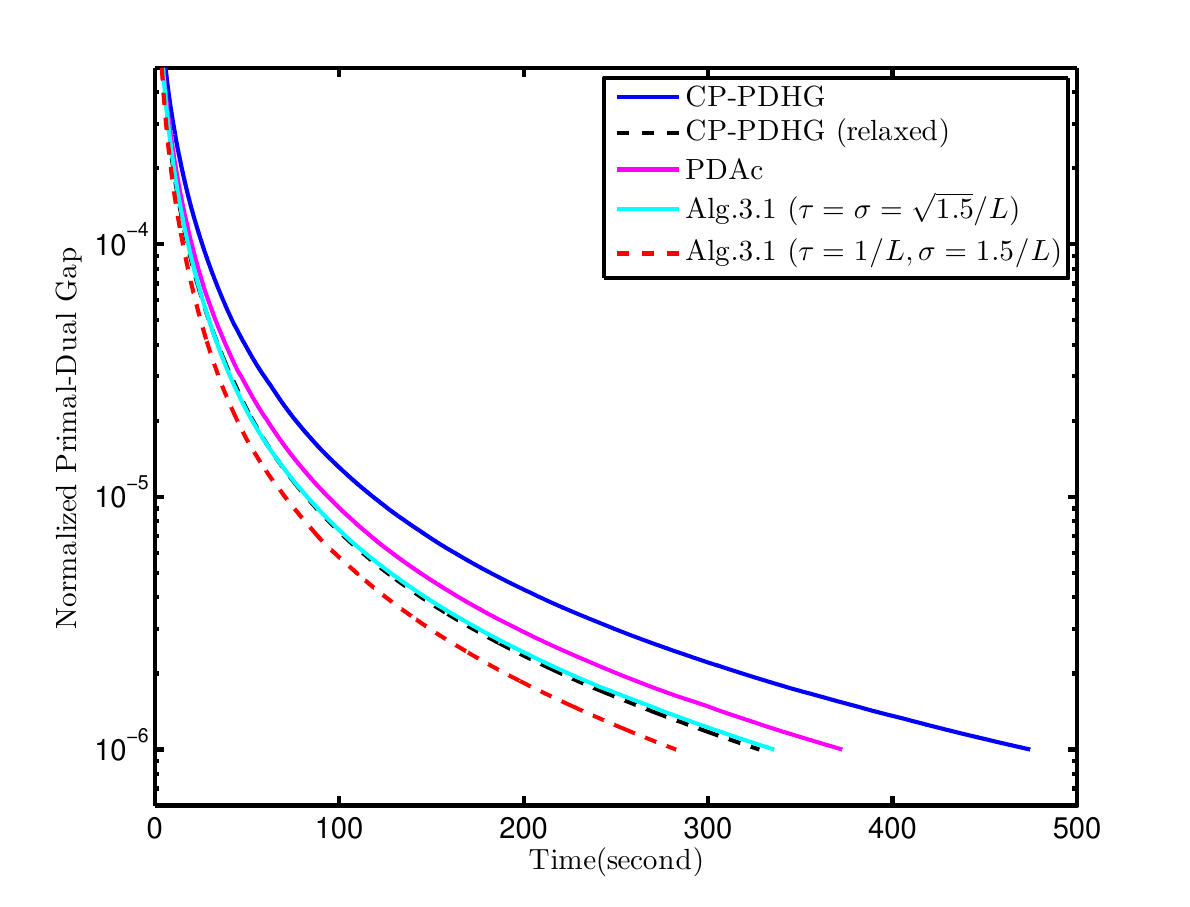}}
\caption{Plots of the normalized primal-dual gap against CPU time for the Butterfly image, with different regularization parameters $\alpha$. The results for the Barbara image are similar.}
\label{Fig:TV-result}
\end{figure}

Table \ref{table1} compiles the numerical results regarding the number of iterations (Iter) and CPU time (Time, measured in seconds) for the image denoising problem, obtained from CP-PDHG, PDAc, and Algorithm \ref{alg-os}. Figure \ref{Fig:TV} showcases the test images, noisy images, and denoised images corresponding to different regularization parameters. Figure \ref{Fig:TV-result} depicts the comparison results of the normalized primal-dual gap plotted against CPU time for the Butterfly image. The corresponding results for the Barbara image are similar and are thus omitted.
As can be observed from Table \ref{table1} and Figure \ref{Fig:TV-result}, Algorithm \ref{alg-os} and PDAc outperform CP-PDHG in terms of the number of iterations and CPU time. Moreover, relaxed CP-PDHG and Algorithm \ref{alg-os} with $\tau=1/L$ and $\sigma=1.5/L$ show slightly better performance compared to others.

Finally, we tested Algorithm \ref{alg-os} and CP-PDHG with the adaptive step sizes, proposed by Goldstein \textit{et al.} \cite{Goldstein2015Adaptive}. Figure \ref{Fig:TV-result-adap} depicts the comparison results against the iteration numbers for the Butterfly image. We see that, the adaptive step sizes is very efficient for both methods.

\begin{figure}[htbp]
\centering
\subfigure[$\alpha = 0.2$]{
\includegraphics[width=0.43\textwidth]{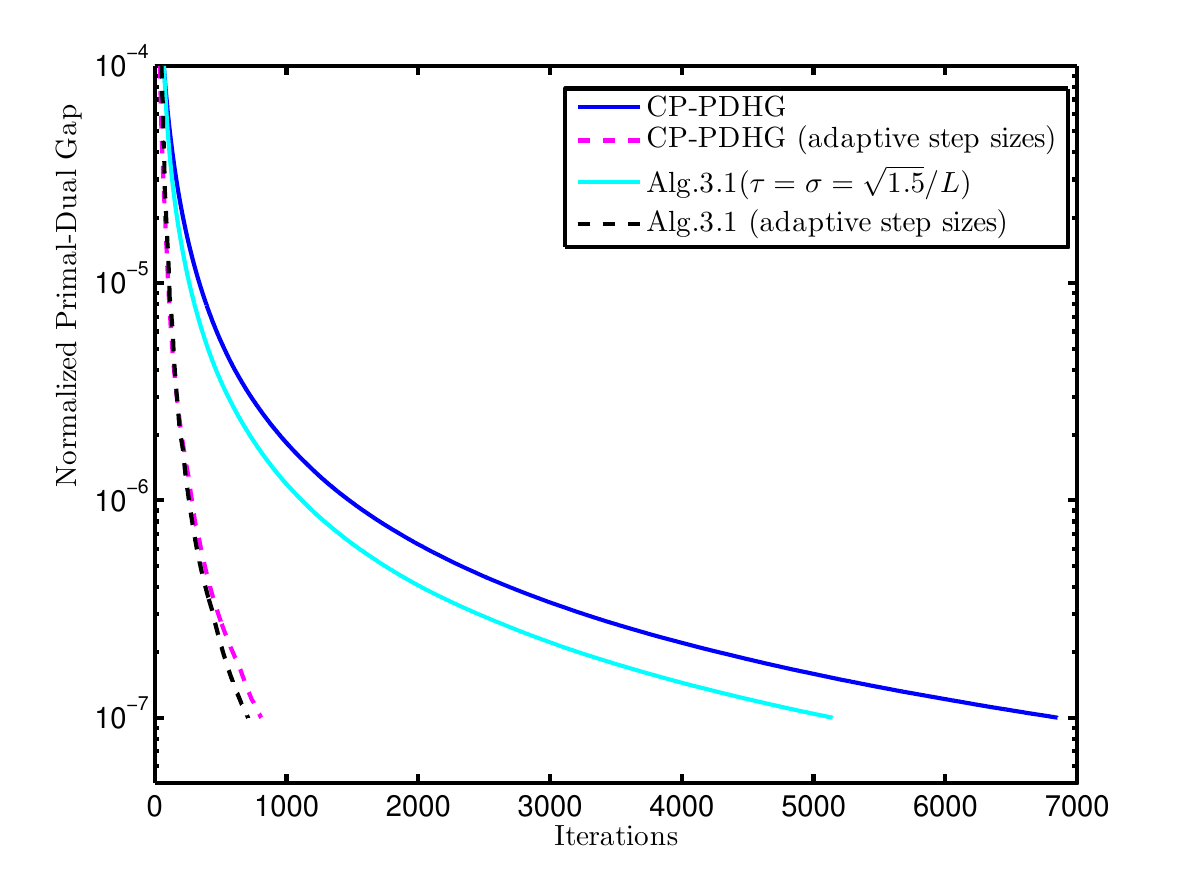}}
\subfigure[$\alpha = 0.5$]{
\includegraphics[width=0.43\textwidth]{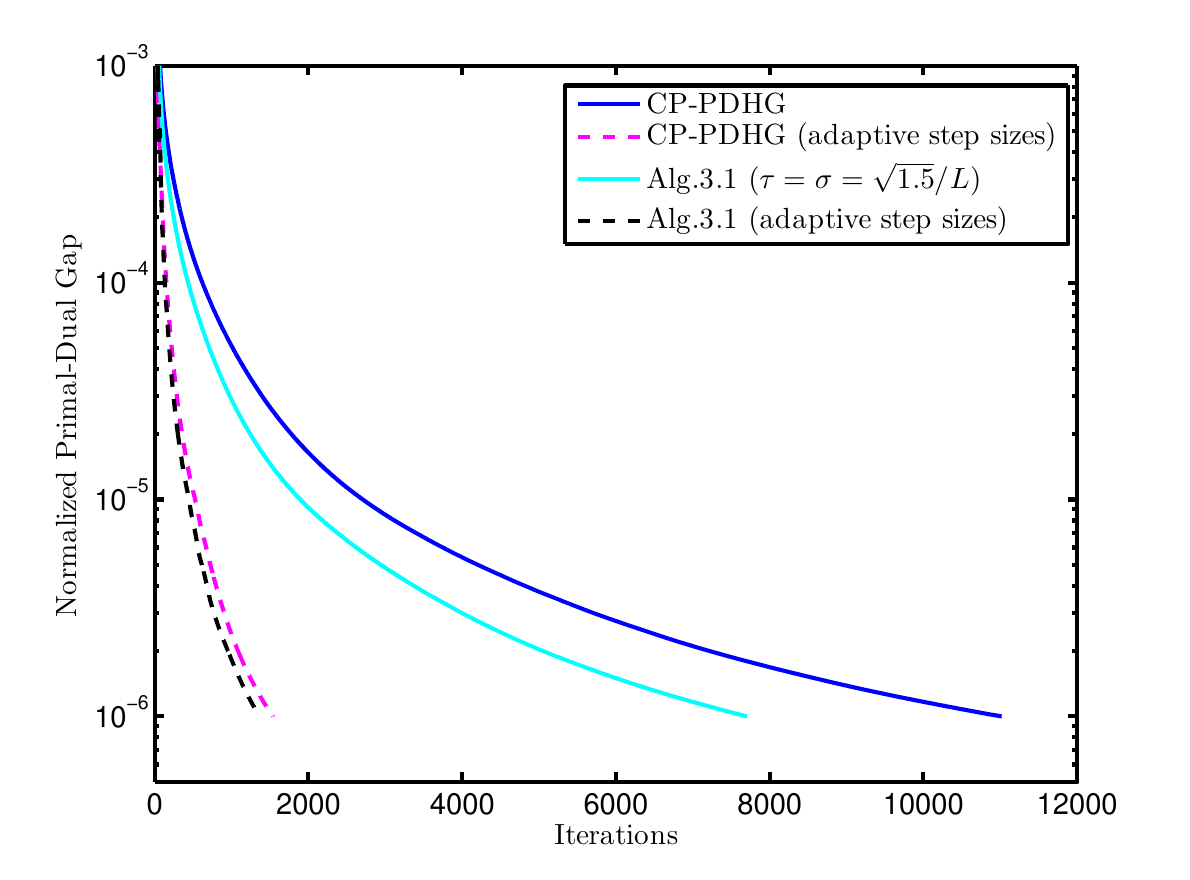}}
\caption{Plots of the normalized primal-dual gap against the iteration numbers for the Butterfly image, with different regularization parameters $\alpha$. The results for the Barbara image are similar.}
\label{Fig:TV-result-adap}
\end{figure}

\subsection{TV image inpainting problem}
Image inpainting refers to the task of filling in missing or damaged parts of an image to make it appear complete and natural.
Let $f_0$ denote a given image in which certain pixel values are missing (by default, these missing values are represented as $0$) and that is contaminated by some random noise. Below, we will refer to $f_0$ as the noisy image. Furthermore, as before, all images are represented by vectors, instead of matrices.
The TV image inpainting problem  \cite{He2012Convergence,Ma2025} can be formulated as:
\be\label{dtv-inp}
\min_{x}~~F(x):=\frac{\lambda}{2}\|Hx-f_0\|_2^2+\|D x\|_1,
\ee
where $H$ is a mask operator,  and $\lambda > 0$ is a regularization parameter.
Specifically, the mask operator $H$ is a diagonal matrix, whose diagonal elements are set to $1$ for the positions corresponding to existing pixels and to $0$ for those corresponding to absent pixel values.
Apparently,  problem \eqref{dtv-inp} represents a special case of \eqref{two-block} with
$g(x)=\frac{\lambda}{2}\|Hx-f_0\|_{2}^{2}$, $f(y)=\|y\|_1$ and $K=D$. As a result, it can be solved using the algorithms discussed/developed in this paper.

In this experiment, we set $\lambda = 100$ as in \cite{Ma2025} and consider two test images: Boats ($512 \times 512$) and Pepper ($512 \times 512$). The noisy input images are generated in accordance with \cite{He2012Convergence,Ma2025}.
For the Boats image, the mask operator is generated using a character mask, causing approximately 15\% of the pixels in the image to be lost. Regarding the Pepper image, the mask operator is generated as described in \cite{He2012Convergence}. Specifically, only one row out of every eight rows is retained, resulting in about 87\% of the pixels being missing. Subsequently, Gaussian noise with a mean of 0 and a standard deviation of 0.02 is added to both images.

We applied the CP-PDHG, PDAc, and Algorithm \ref{alg-os} to solve problem \eqref{dtv-inp}. The parameter settings for these algorithms are as follows: For CP-PDHG, $\tau\sigma = 1/L^2$ and $\rho=1.5$; for PDAc, $\psi = 1.6$ and $\tau\sigma=1.6/L^2$; for Algorithm \ref{alg-os}, $\theta = 0.99/5$, $\eta = 7/6$, and $\gamma = 1.5/L^2$. Here, $L$ denotes the spectral norm of $K$ and is chosen to be $\sqrt{8}$ as in \cite{Chambolle2011A}. In this section, for all the algorithms under test, we set $\tau = r^2\sigma$ with $r = 0.4$.
All of the algorithms initiate their iterations with  $x_0 = f_0$ and $y_0 = D x_0$. The iteration process is terminated when the relative function value residual
$e_{\text{obj}}(x_n):= {|F(x_n)-F_{\text{opt}}|/|F_{\text{opt}}|}<\epsilon$ for given $\epsilon>0$. Here, $F_{\text{opt}}$ represents the optimal objective function value of problem \eqref{dtv-inp}. As the value of $F_{\text{opt}}$ is not known a priori, we execute the CP-PDHG algorithm for a sufficiently large number of iterations to acquire the ground-truth solution $x^\star$. Subsequently, we set $F_{\text{opt}} = F(x^\star)$.
The quality of the inpainted images is evaluated using the Signal-to-Noise Ratio (SNR). The SNR of the recovered image $x$ is defined as
$\mbox{SNR}(x) := 20\log_{10} {(\|\bar{x}\| / \|x - \bar{x}\|)}$,
where $\bar{x}$ represents the original, unblemished image.

\begin{table}[htpb]
\caption{Comparison results of CP-PDHG, PDAc and Algorithm \ref{alg-os} for image inpainting problems under different $\epsilon$ values.
The consumed number of iterations, CPU time and SNR of recovered images are presented.}\label{table2}
\center
\begin{tabular}{cccc}
\hline
\multirow{2}{*}{Image}&\multirow{2}{*}{Method} & $\epsilon=1\times10^{-2}$ & $\epsilon=3\times10^{-3}$ \\
\cline{3-4}
&&Iter/Time/SNR & Iter/Time/SNR \\
\hline
Boats &  CP-PDHG &  346/21.5/22.10& 684/46.9/22.39\\
&  CP-PDHG (relaxed)&  268/18.6/22.33 & 552/41.4/22.42\\
&  PDAc & 299/18.1/22.26 &  578/36.3/22.42\\
&  Alg. \ref{alg-os} ($\tau=\sigma=\sqrt{1.5}/L$) & 268/18.4/22.20 &  541/39.3/22.38\\
&  Alg. \ref{alg-os} ($\tau=1/L, \sigma=1.5/L$) & 222/15.3/22.27 & 489/36.3/22.35\\
\hline
\hline
\multirow{2}{*}{Image}&\multirow{2}{*}{Method} & $\epsilon=10^{-1}$ & $\epsilon=10^{-2}$ \\
\cline{3-4}
&&Iter/Time/SNR & Iter/Time/SNR \\
\hline
Pepper &  CP-PDHG & 173/11.2/15.60  & 533/34.2/15.58\\
&  CP-PDHG (relaxed) & 147/10.5/15.58 & 416/29.9/15.56\\
&  PDAc & 155/9.6/15.68& 464/29.1/15.72\\
&  Alg. \ref{alg-os} ($\tau=\sigma=\sqrt{1.5}/L$) &  141/10.2/15.71 & 402/28.6/15.66\\
&  Alg. \ref{alg-os} ($\tau=1/L, \sigma=1.5/L$)&  146/10.4/15.63 & 377/27.7/15.62\\
\hline
\end{tabular}
\end{table}

\begin{figure}[htbp]
\centering
\includegraphics[width=0.48\textwidth]{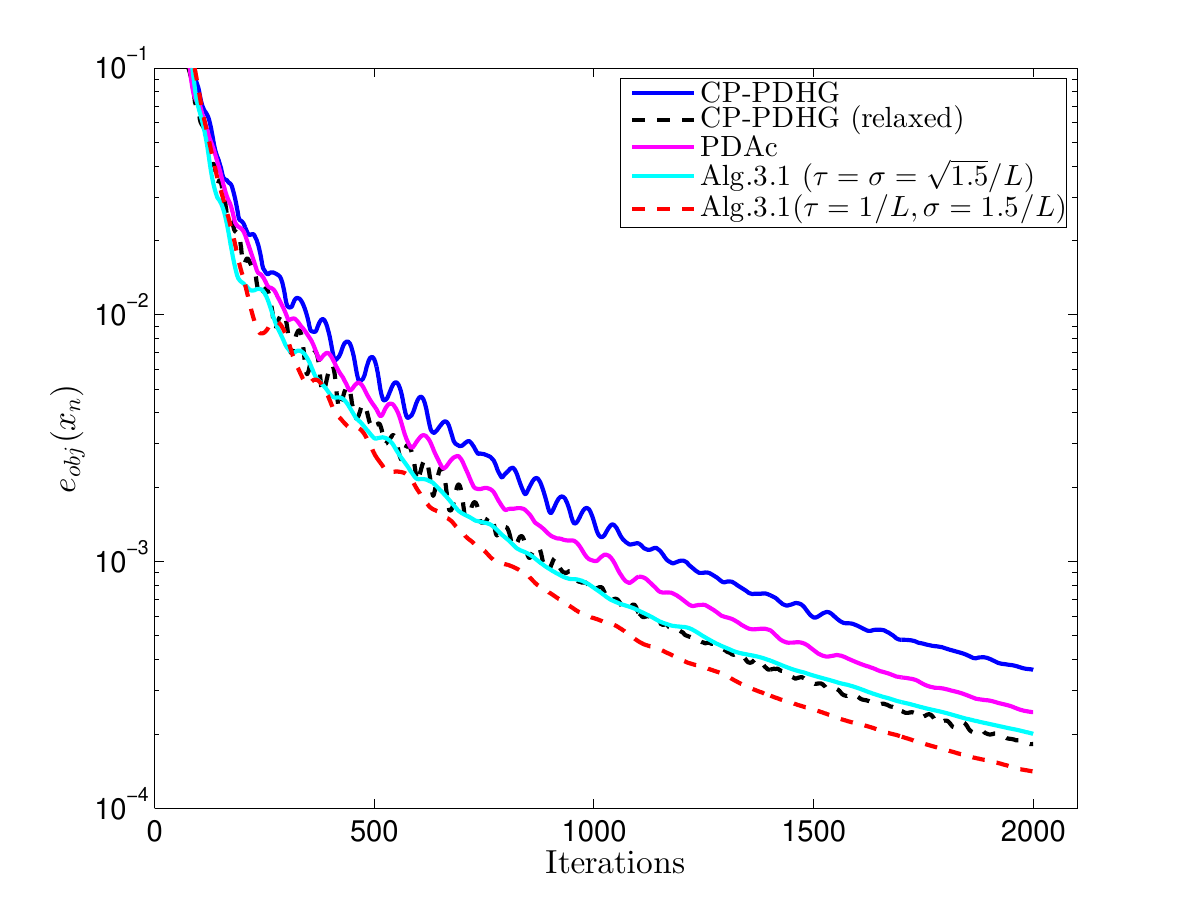}
\includegraphics[width=0.48\textwidth]{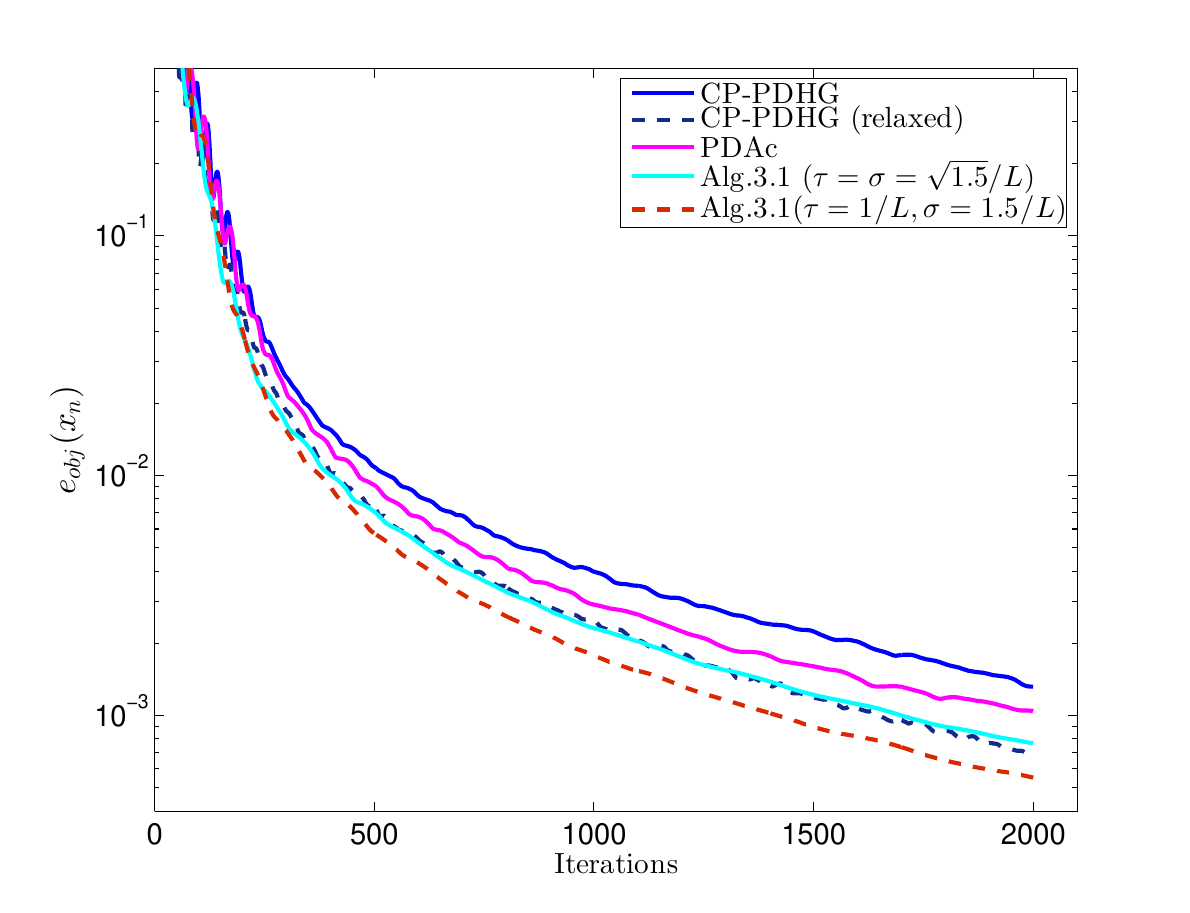}
\caption{Plots presenting the relative error of objective function values with respect to the iteration numbers. The left-hand plot is for the Boats image, and the right-hand plot is for the Pepper image.}
\label{Fig:TV-obj}
\end{figure}

\begin{figure}[htbp]
\centering
\includegraphics[width=0.3\textwidth]{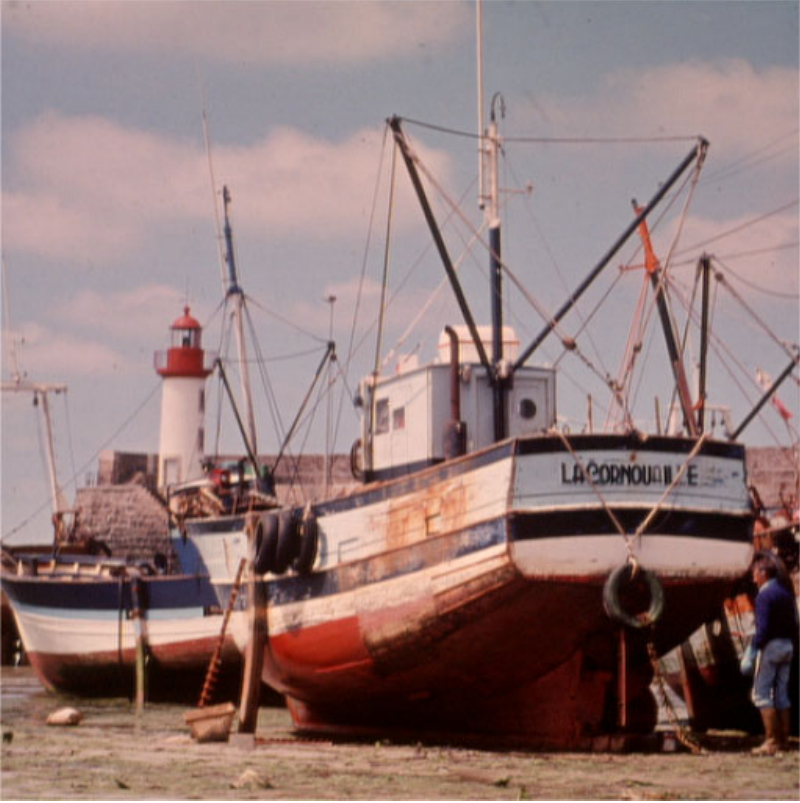}
\includegraphics[width=0.3\textwidth]{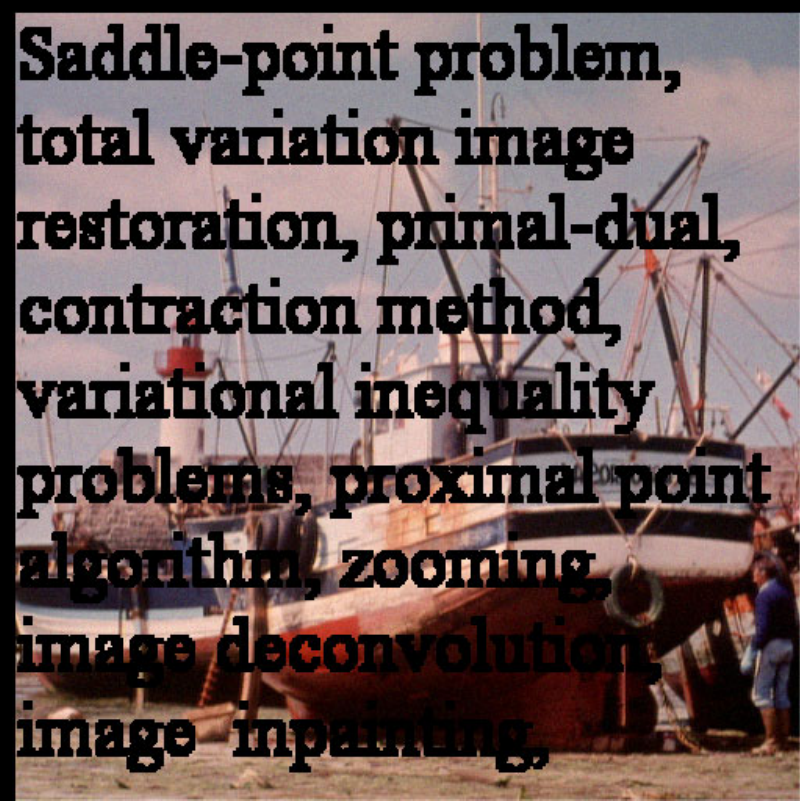}
\includegraphics[width=0.3\textwidth]{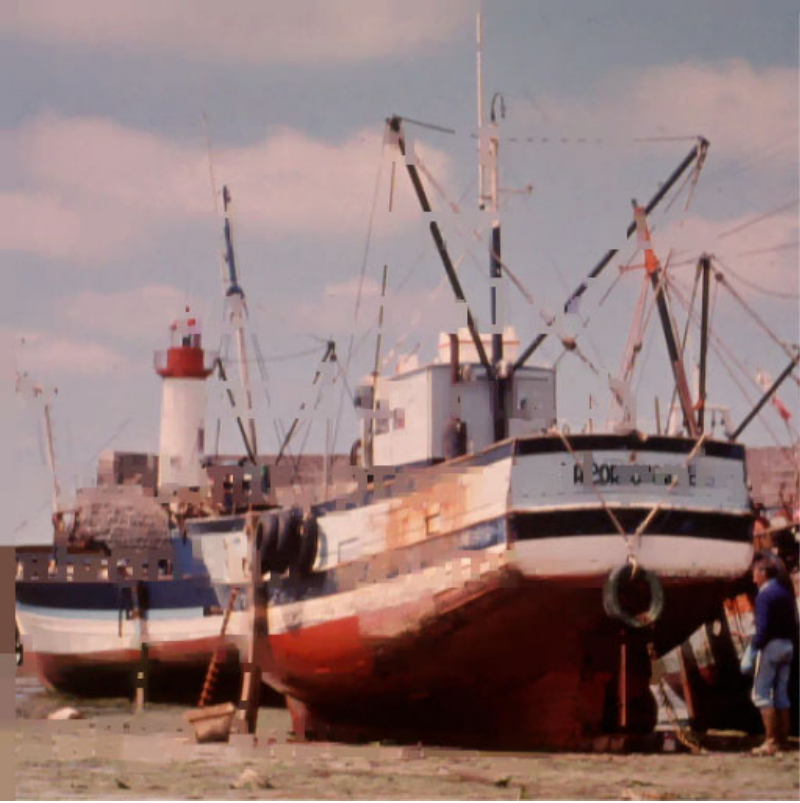}
\includegraphics[width=0.3\textwidth]{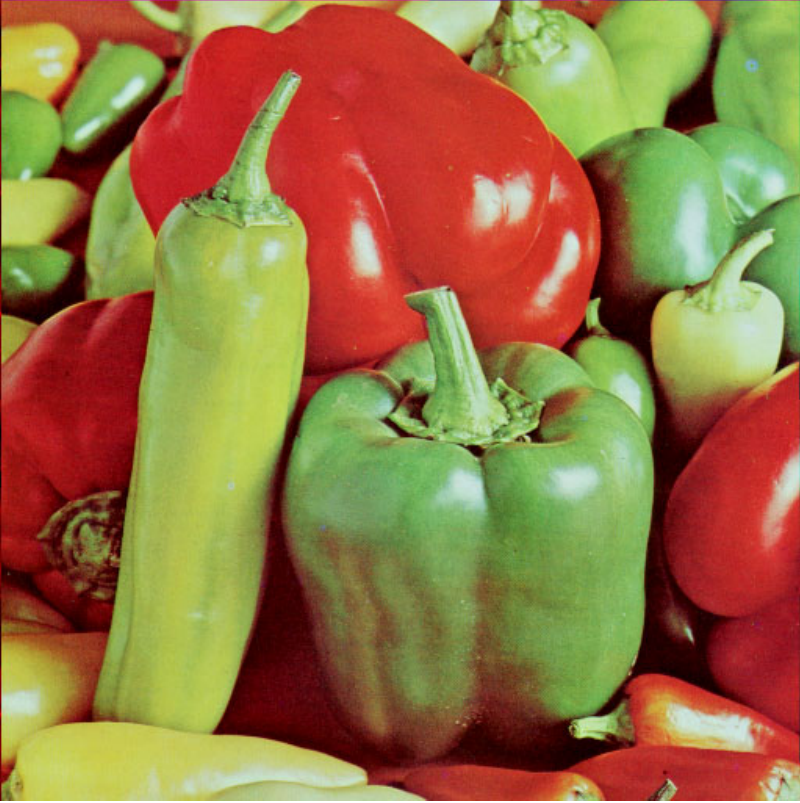}
\includegraphics[width=0.3\textwidth]{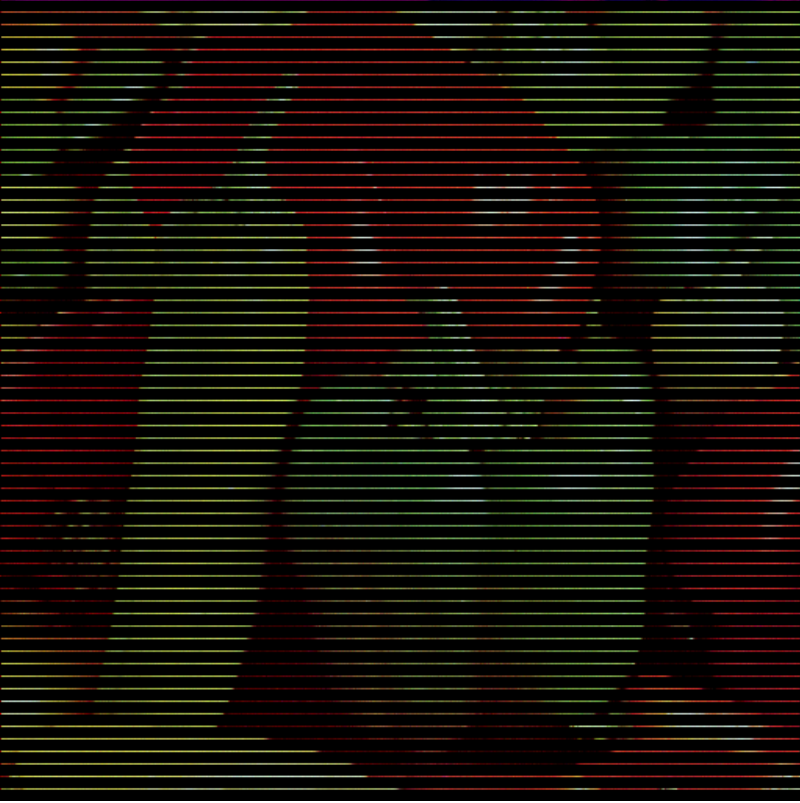}
\includegraphics[width=0.3\textwidth]{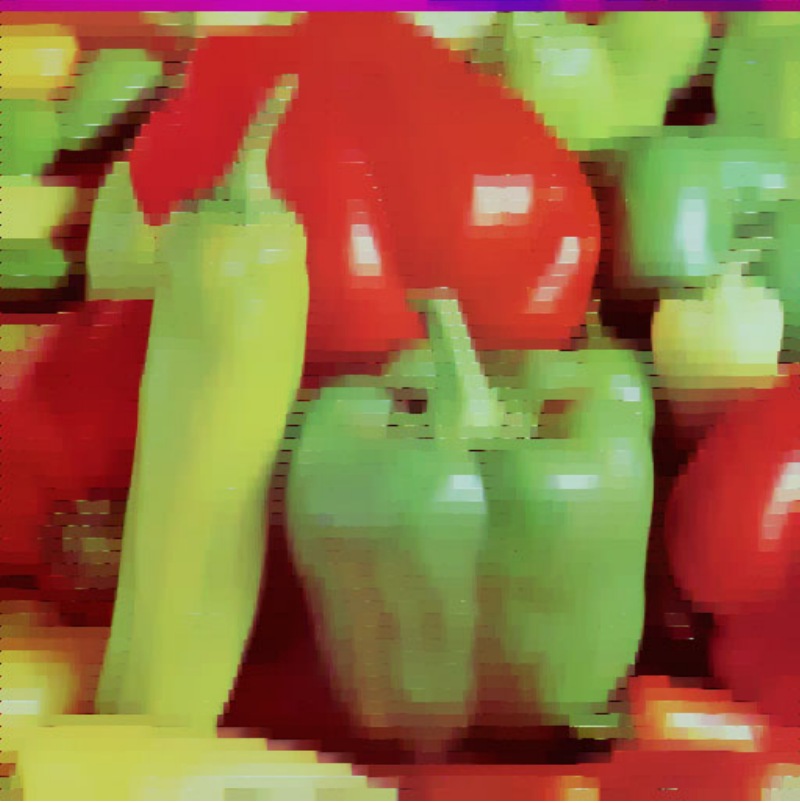}
\caption{The first column shows the original images, the second column presents the input images, and the third column displays the output images recovered by Algorithm \ref{alg-os}. }
\label{Fig:TV-inpainting}
\end{figure}

Table \ref{table1} summarizes the numerical results, including the number of iterations (Iter), CPU time (Time, measured in seconds), and SNRs of the images recovered by CP-PDHG, PDAc, and Algorithm \ref{alg-os}.
We present the relative function value residual $e_{\text{obj}}(x_n)$ within 2000 iterations in Figure \ref{Fig:TV-obj}.
Figure \ref{Fig:TV-inpainting} displays the original images, noisy input images, and inpainted output images obtained by Algorithm \ref{alg-os}.
Given that the inpainted output images obtained by the other two algorithms are visually similar, we only showcase the inpainted output images generated by Algorithm \ref{alg-os} for the sake of conciseness and clarity.

As clearly shown in Figure \ref{Fig:TV-obj}, Algorithm \ref{alg-os} ($\tau=1/L, \sigma=1.5/L$) demonstrates slightly superior convergence performance regarding the function value residual when compared to PDAc and CP-PDHG. With the same termination criterion, Table \ref{table2} reveals that, in contrast to CP-PDHG and PDAc, Algorithm \ref{alg-os} demands the fewest iterations and the shortest running time. Moreover, Algorithm \ref{alg-os} attains comparable or slightly better quality in the inpainted images.

\subsection{Bilinear saddle point problem}
In this section, we test the bilinear saddle-point problem \eqref{saddle-point} under two types of settings.
The first setting is a minimax matrix-game problem where $g(x)=\iota_{\D_q}(x)$, $f^*(y)=\iota_{\D_p}(y)$, $\D_q = \{x\in\mathbb{R}^q: x\geq 0, \; \sum_{i = 1}^q x_i = 1\}$, and $K\in\mathbb{R}^{p\times q}$ is randomly generated.
We tested three types of random matrices $K$, namely:
\begin{itemize}
    \item (i) $(q,p)=(100,100)$ and all entries of $K$ were independently generated from the uniform distribution on $[- 1,1]$.
    \item (ii) $(q,p)=(100,100)$ and all entries of $K$ were independently generated from the normal distribution $\mathcal{N}(0,1)$.
    \item (iii) $(q,p)=(100,100)$ and all entries of $K$ were independently generated from the normal distribution $\mathcal{N}(-1,1)$.
\end{itemize}
To compare different algorithms, we utilize the primal-dual gap  function defined by
$G(x, y) := \max\nolimits_i(Kx)_i - \min\nolimits_j(K^\top y)_j$ for a feasible pair $(x, y) \in\D_q \times \D_p$.
Here, the subscript $i$ (or $j$) represents the $i$th (or $j$th) component of the corresponding vector.

First, in accordance with \eqref{def:parameter-set},
we vary $\theta$ and $\eta$ from $0.1$ to $1.9$ in steps of $0.1$. Additionally, we set $\gamma \|K\|^2 = 0.99(2 - \theta)(2 - \eta)$ and then test Algorithm \ref{alg-os} for solving case (i).
Since the sequence $\{y_n\}_{n\in\N}$ converges to $y^\star$ which belongs to $\D_p$, even though $y_n\in \D_p$ may not hold for Algorithm \ref{alg-os}, we computed $G(x_n,y_n)$ and terminated Algorithm \ref{alg-os} when either $G(x_n,y_n)<10^{-9}$ or the maximum number of iterations $n_{\max}=10^{6}$ was reached.
The results of the required number of iterations are presented in Figure \ref{Fig MMtest}, in which the top ten minimum
iteration numbers are marked out with yellow stars. Notably, our recommended $\eta=7/6$ and $\theta=1/5$ satisfy $(2-\theta)(2-\eta)=1.5$. Since all but one yellow star lie in the curve's lower-left region, where smaller $(\eta,\theta)$ imply larger $\gamma$, this strongly suggests that the algorithm prefers larger step sizes.
Consequently, for the remainder of this section, we select $\gamma \|K\|^2 = 1.5$ with $\theta = 0.99/5$, $\eta=7/6$ and $\tau=\sigma$.

\begin{figure}[htpb]
\centering
\includegraphics[width=0.5\textwidth]{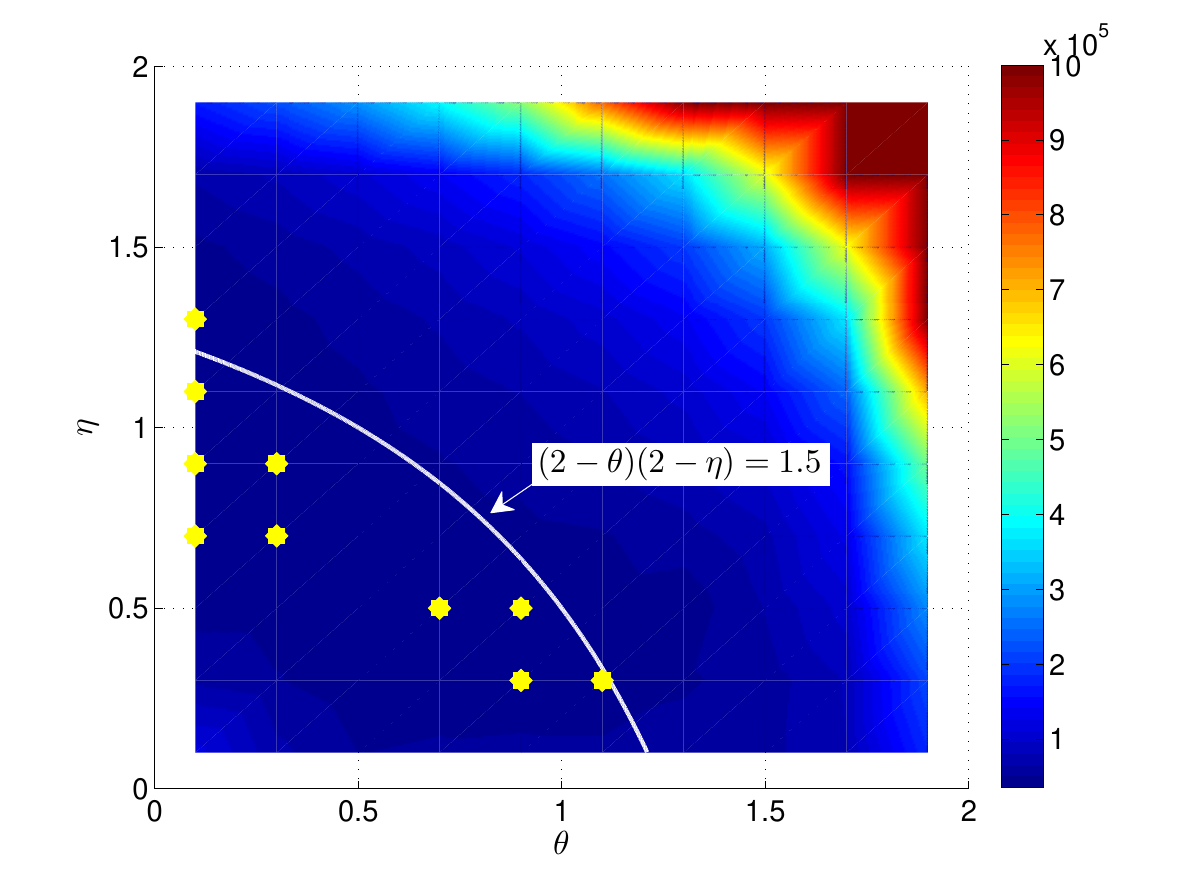}
\caption{Results of the required number of iterations of Algorithm \ref{alg-os} with different parameters $(\theta,\gamma,\eta)$ for solving the minimax matrix-game problem.  }
\label{Fig MMtest}
\end{figure}

Figure \ref{Fig MM} shows the decreasing trend of the primal-dual gap function values as CPU time elapses for CP-PDHG \cite{Chambolle2011A}, PDAc \cite{ChY2020Golden,ChY2022relaxed}, and Algorithm \ref{alg-os}. For CP-PDHG, we set $\tau=\sigma$ and $\tau\sigma = 1/\|K\|^2$. For PDAc, we set $\psi = 1.6$, $\tau=\sigma$ and $\tau\sigma = 1.6/\|K\|^2$.
As can be observed from Figure \ref{Fig MM}, Algorithm \ref{alg-os} outperforms CP-PDHG and PDAc in all tests. We ascribe the superior numerical performance of Algorithm \ref{alg-os} to the larger step sizes specified by $\gamma=\tau\sigma = 1.5/\|K\|^2$ and over-relaxation step $\eta=7/6$. However, for this minimax matrix-game problem, the relaxation of CP-PDHG fails to yield better results.

\begin{figure}[hbtp]
\centering
\subfigure[Case (i), $\|K\|\approx11.00$.]{
\includegraphics[width=0.32\textwidth]{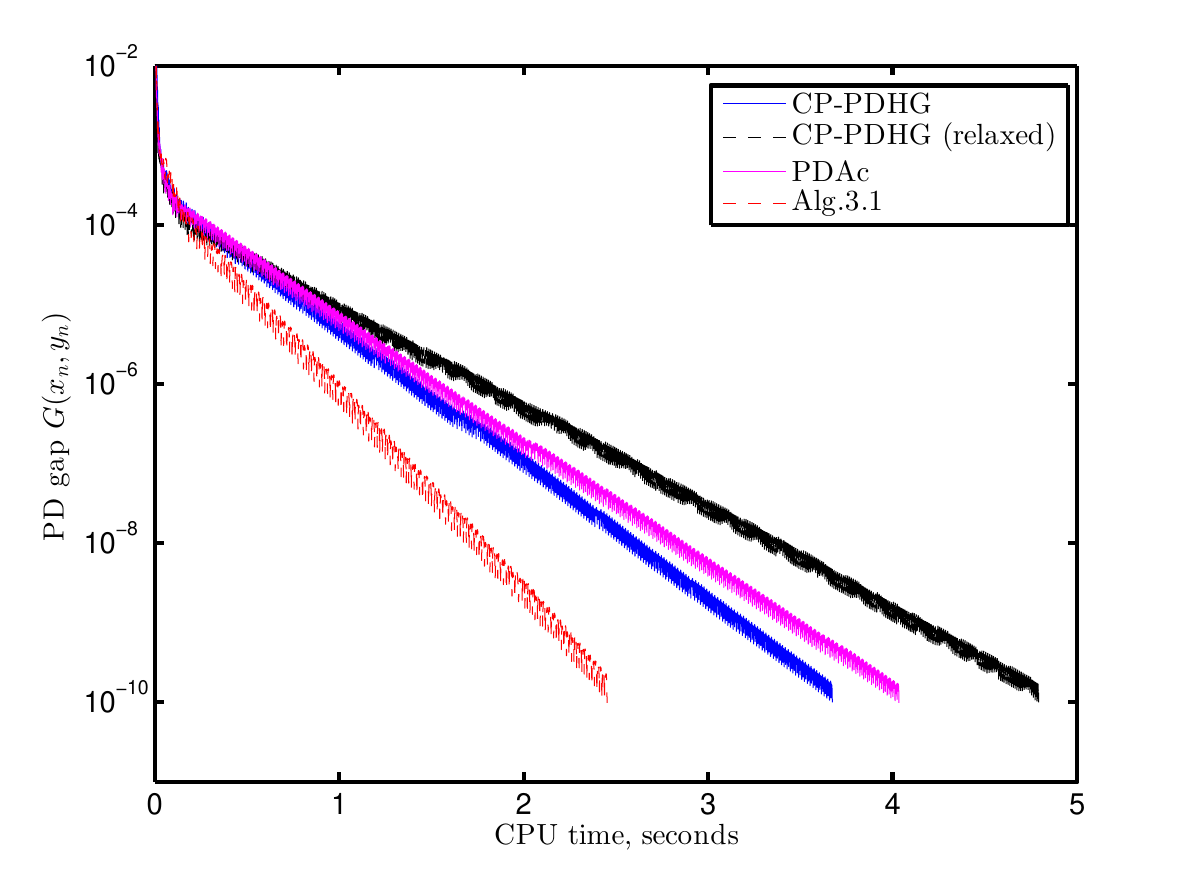}}
\subfigure[Case (ii), $\|K\|\approx20.18$.]{
\includegraphics[width=0.32\textwidth]{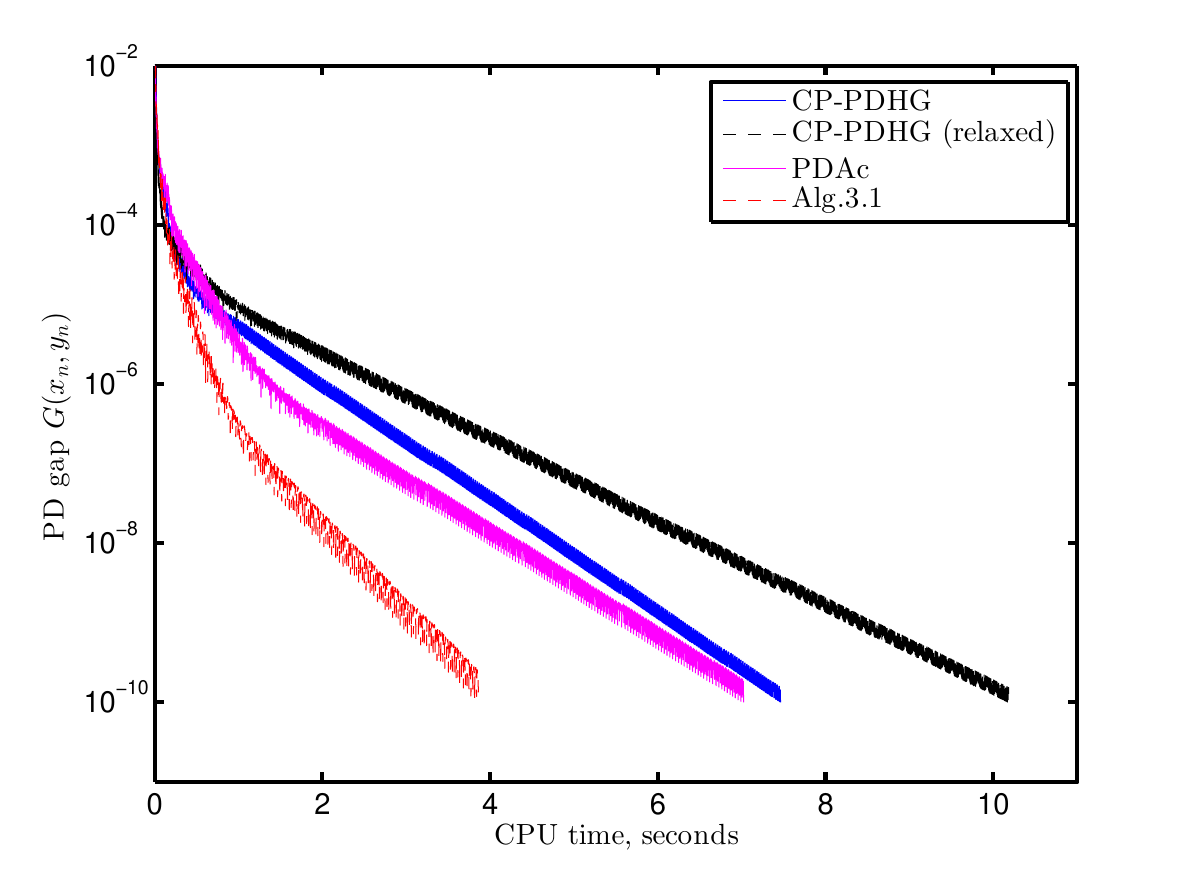}}
\subfigure[Case (iii), $\|K\|\approx97.59$.]{
\includegraphics[width=0.32\textwidth]{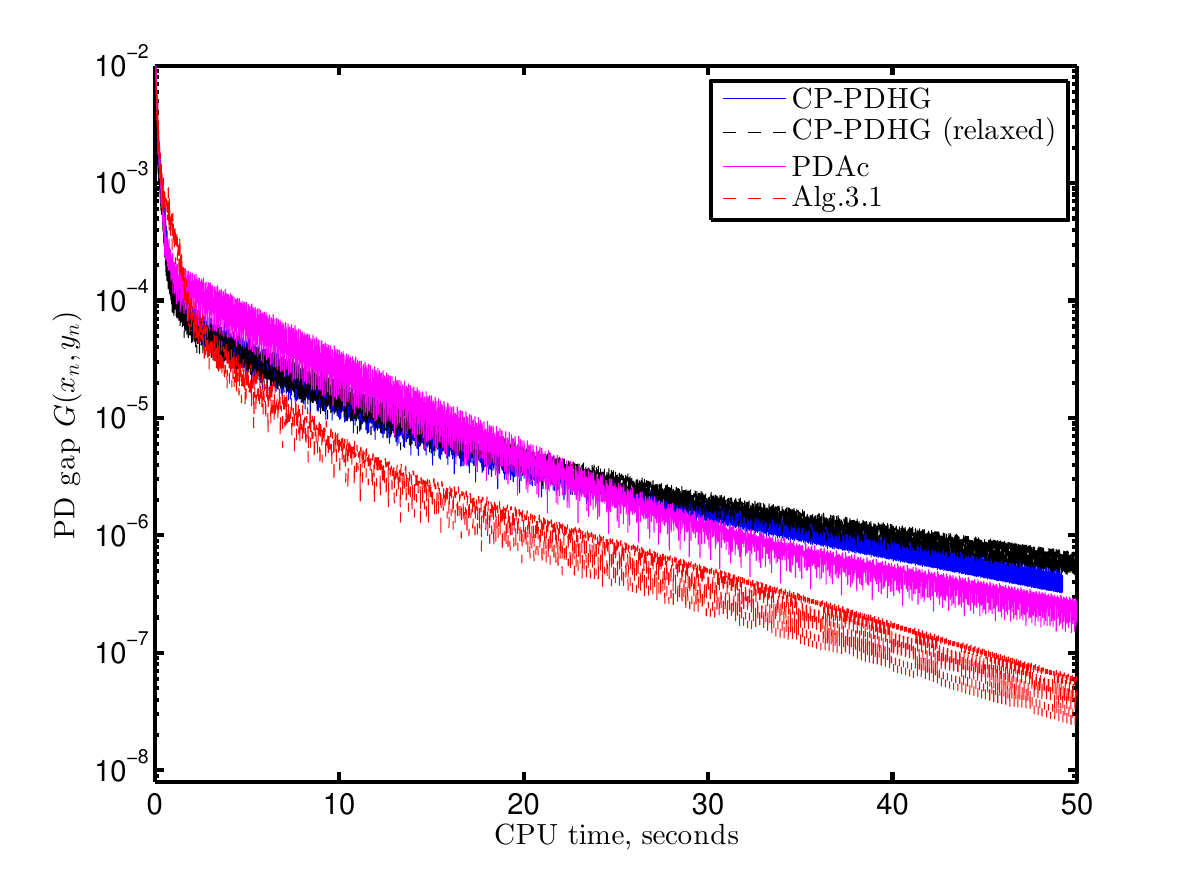}}
\caption{Numerical results for minimax matrix-game problem.
}
\label{Fig MM}
\end{figure}

The second setting of this subsection focuses on the LASSO problem. This problem can be regarded as \eqref{saddle-point} with $g(x) = \mu \|x\|_1$ and $f(y) =\frac{1}{2}\|y-b\|^2$. The corresponding primal problem \eqref{two-block} appears as $\min_x F(x):=\frac 1 2 \|Kx-b\|^2 + \mu  \|x\|_1$.
We randomly generated $x^*\in \mathbb{R}^q$. Specifically, we randomly and uniformly determined $s$ nonzero components of $x^*$, and their values were sampled from the uniform distribution over the interval $[-10,10]$.
The matrix $K\in \mathbb{R}^{p\times q}$ is constructed as per \cite{Malitsky2018A}. First, we generate a matrix $K^0\in\mathbb{R}^{p\times q}$ with entries independently sampled from $\mathcal{N}(0,1)$. Then, for a scalar $v\in(0,1)$, we construct the matrix $K$ column by column in the following manner: $K_1 = K_1^0/\sqrt{1 - v^2}$ and $K_j = vK_{j - 1}+K_j^0$ for $j = 2,\ldots,q$.
Here, $K_j$ and $K_j^0$ represent the $j$th column of $K$ and $K^0$, respectively.
As $v$ increases, $K$ becomes more ill-conditioned.
In this experiment, we set $\mu = 0.1\|A^\top b\|_{\infty}$ and tested $v = 0.5$ and $v = 0.9$. Finally, we set $b = Kx^*+\text{noise}$, where the additive $\text{noise}$ was generated from $\mathcal{N}(0,0.1)$.

\begin{figure}[htbp]
\centering
\subfigure[$v=0.5$, $\|K\|\approx103.15$]{
\includegraphics[width=0.43\textwidth]{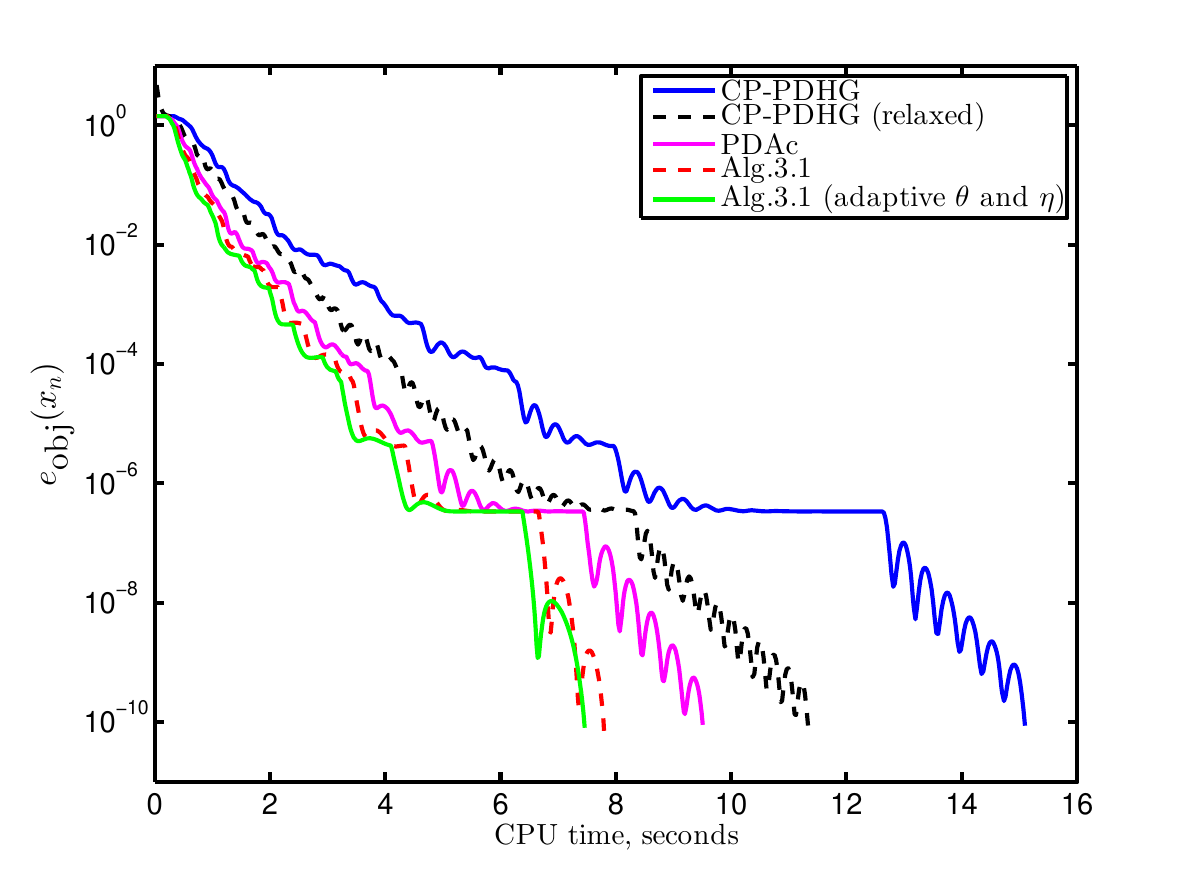}}
\subfigure[$v=0.9$, $\|K\|\approx378.56$]{
\includegraphics[width=0.43\textwidth]{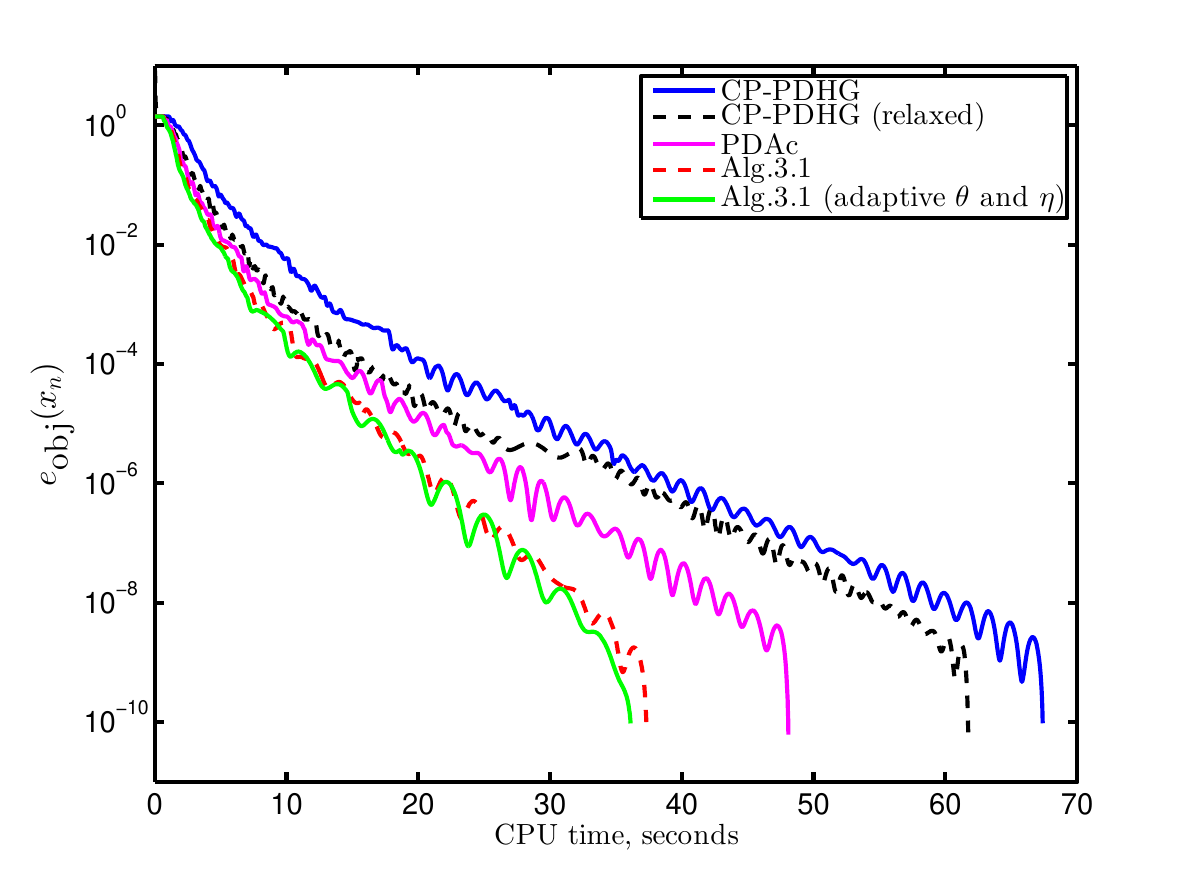}}
\caption{Comparison  results in terms of objective violation on the random LASSO with $(p,q,s)=(1000,2000,100)$.
}
\label{Fig:LASSO}
\end{figure}

Similar to the minimax matrix-game problem, for CP-PDHG, we maintain the setting of $\tau=\sigma$ and $\tau\sigma = 1/\|K\|^2$, and for PDAc, we set $\psi = 1.6$, $\tau=\sigma$ and $\tau\sigma = 1.6/\|K\|^2$.
We compare the algorithms based on the function-value residual, which is defined as $e_{\text{obj}}(x):=  |F(x)-F_{\text{opt}}| / |F_{\text{opt}}|$. Here, $F_{\text{opt}}$ represents the minimum function value and is computed using MOSEK via CVX\footnote{Available at  \url{http://cvxr.com/cvx/}}. The algorithms are terminated when $e_{\text{obj}}(x)<10^{-10}$.

Figure \ref{Fig:LASSO} depicts the evolution of the function value residual $e_{\text{obj}}(x_n)$ as CPU time progresses. These results illustrate the efficiency of Algorithm \ref{alg-os}, and also reveal that both Algorithm \ref{alg-os} and PDAc slightly outperform CP-PDHG in terms of CPU time.
This is probably because Algorithm \ref{alg-os} and PDAc can adopt larger step sizes. Additionally, the adaptive strategy presented in Section \ref{sec:discussion} for dynamically adjusting $(\theta,\eta)$ functions effectively, as evidenced by the results in Figure \ref{Fig:LASSO}.

Moreover, we tested Algorithm \ref{alg-os-imp} for solving the LASSO problem. For 10 randomly generated problems, we plotted in Figure \ref{Fig:LASSO-P} the evolution of $e_{\text{obj}}(x_n)$ with respect to the number of iterations.
For Algorithm \ref{alg-os-imp}, we set $\theta=\eta = 1.95$ and $\gamma\|K\|^2 = 0.6\theta\eta$. From the comparison with Algorithm \ref{alg-os}, we observe that the nondiagonal $P$ given in \eqref{def:P} leads to a lower number of iterations but requires more CPU time. This is because Algorithm \ref{alg-os-imp} requires one additional implementation of $K^*$ per iteration.

\begin{figure}[htbp]
\centering
\subfigure[]{
\includegraphics[width=0.43\textwidth]{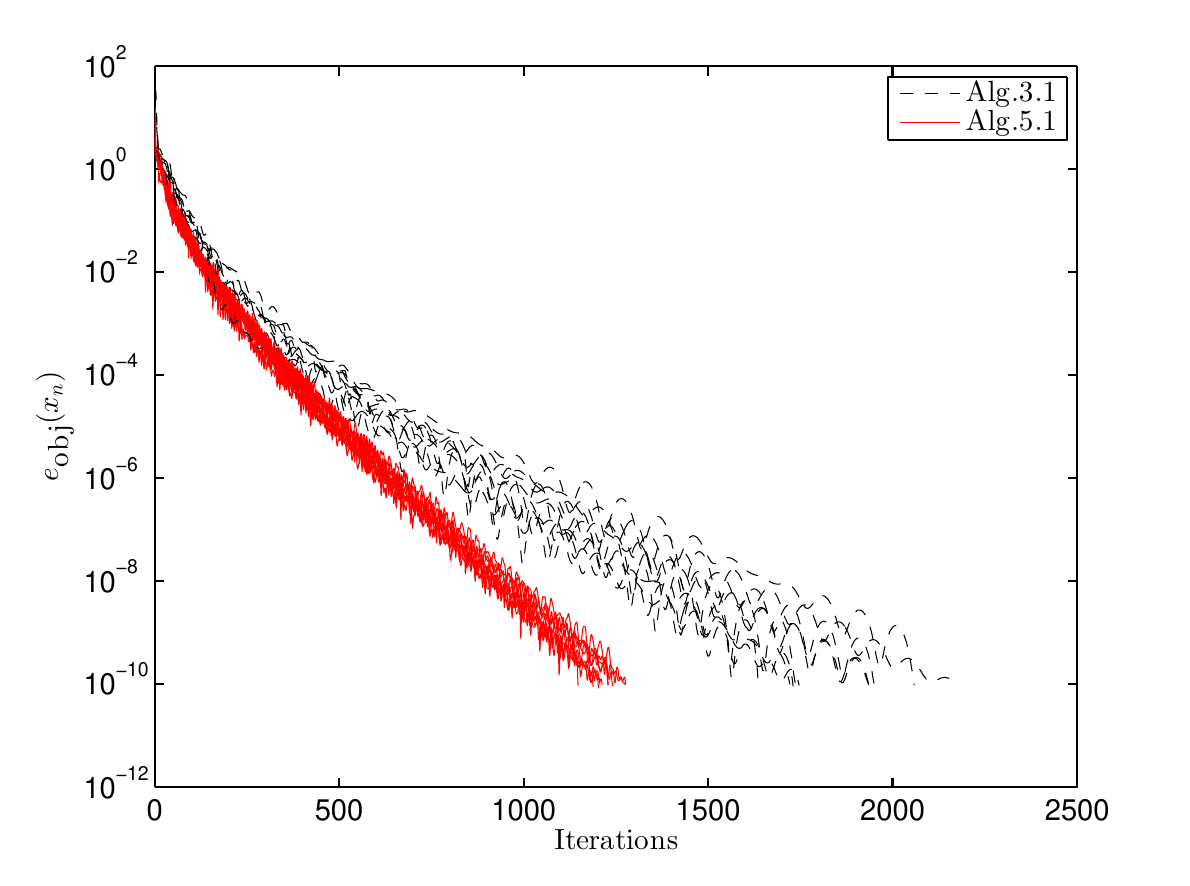}}
\subfigure[]{
\includegraphics[width=0.43\textwidth]{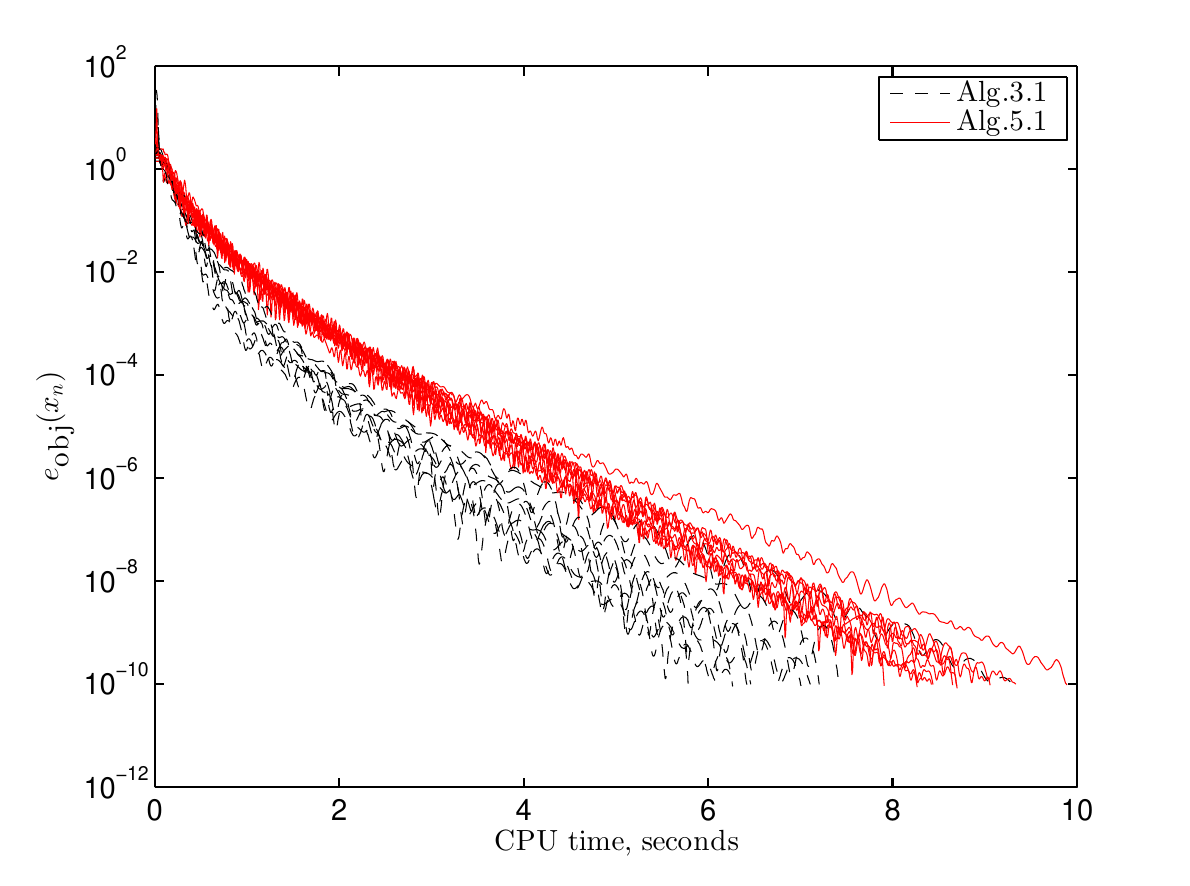}}
\caption{Comparison  results of Algorithms \ref{alg-os} and \ref{alg-os-imp} in terms of the function value residual  for 10 random LASSO  problems with $(p,q,s)=(300,1000,30)$ and $v=0.9$. Left:  $e_{\text{obj}}(x_n)$ against the iteration number. Right:  $e_{\text{obj}}(x_n)$ against the CPU time.
}
\label{Fig:LASSO-P}
\end{figure}

\section{Conclusions}\label{sec:conclusion}
In this paper, we introduced a basic primal-dual splitting algorithm (PDSA) scheme for composite monotone inclusion problems. This framework encompasses three adjustable elements, namely $v_{n + 1}$, $z_{n + 1}$, and $\omega_{n + 1}$ in equation \eqref{bs}. These elements can be derived through simple vector addition and scalar multiplications of the previous iterates.
By making a specific choice of these adjustable elements, we proposed a new PDSA allowing more flexible step sizes. In this new PDSA, the range of $\tau\sigma\|K\|^2$ can be as wide as $(0,4)$ when certain parameter values are appropriately selected.
Furthermore, we established the convergence of the proposed algorithm from the perspective of fixed-point iteration, despite the fact that the scheme depends on a convex combination step as given in equation \eqref{os-z}.

In conclusion, we present two potential directions for further research. First, the PDAc proposed in \cite{ChY2022relaxed} can be readily rewritten as a fixed-point iteration using the operator $T_P$ defined in \eqref{def:T}, with the linear operator $P$ being asymmetric. Thus, it is of particular interest to address the challenge posed by the asymmetry of $P$ and establish the convergence of PDAc from the fixed-point iteration viewpoint.
Second, in \cite{improvement15Bot}, under the strong  monotonicity assumptions for some involved operators, the PDSA presented in \eqref{PDSA} was accelerated by adaptively choosing $\tau$ and $\sigma$. However, for Algorithm \ref{alg-os}, the key question persists: how can we accelerate it through the adaptive adjustment of $\tau$ and $\sigma$? These issues remain interesting for future investigations.

\section*{Acknowledgment} We thank the anonymous referees for their thoughtful, insightful and constructive comments, which have greatly improved the paper.

\bibliographystyle{abbrv}
\bibliography{cxktex}

\begin{appendix}
\section{Proof of Lemma \ref{lem-opt}}\label{proof:lem-opt}
\begin{proof}
Recall that, for any $({x^{\star}}, {w^{\star}}, {y^{\star}})\in \widetilde{\Omega}$ defined in \eqref{def:Omega}, we have $Kx^\star=w^\star$. Since $x_n=\prox_{\tau g}(v_n - K^*(\gamma K v_n-u_n))$, from \eqref{property-p} we have
\begin{align}\label{eq:opt-g}
\tau (g(x_n)-g(x^\star))\leq  \langle x_n-v_n + K^*(\gamma K v_n-u_n), x^\star-x_n\rangle.
\end{align}
Similarly, from $w_n = \prox_{f/\sigma}((Kv_n+Kx_n)-u_n/\gamma)$, $\gamma=\tau\sigma$ and \eqref{property-p}, we have
\begin{align}\label{eq:opt-f}
\tau(f(w_n)-f(w^\star)) \leq  \langle \gamma w_n+u_n-\gamma(Kv_n+Kx_n), w^\star-w_n\rangle.
\end{align}
By summing  \eqref{eq:opt-g} and \eqref{eq:opt-f}, adding $\tau\langle y,Kx_n-w_n\rangle$ to both sides, and applying basic algebraic operations, we can easily derive
\begin{align}\label{eq:key-opt}
\tau J(x_n,w_n,y)\leq \, & \langle x_n-v_n, x^\star-x_n\rangle+\langle \gamma K v_n-u_n, Kx^\star-Kx_n\rangle \nonumber\\
& \, +\langle \gamma w_n+u_n-\gamma (Kv_n+Kx_n), w^\star-w_n\rangle+\tau\langle y,Kx_n-w_n\rangle \nonumber\\
= \, &\langle x_n-v_n,x^\star-x_n \rangle+\langle w_n -Kx_n, u^\star(y)-u_n \rangle-  \gamma \langle K(v^\star- v_n), K(x^\star- x_n)\rangle \nonumber\\
& \, +\gamma\langle w^\star-w_n,   2 Kx^\star -(Kv_n+Kx_n)-(w^\star-w_n)\rangle,
\end{align}
where $u^\star(y)=\gamma Kv^\star-\tau y$, $x^\star=v^\star$ and $K x^\star-w^\star=0$ are utilized.
Next, we treat each term on the right-hand-side  of \eqref{eq:key-opt} separately.
By adding $\langle x_n-v_n,v_n-v^\star \rangle$ to the first term in \eqref{eq:key-opt} and using $x^\star=v^\star$, we obtain
\begin{align}\label{eq:1-opt}
\langle x_n-v_n,x^\star-x_n \rangle +\langle x_n-v_n,v_n-v^\star \rangle = -\|v_n-x_n\|^2.
\end{align}
By subtracting $\langle x_n-v_n,v_n-v^\star \rangle$ from the second term in \eqref{eq:key-opt},
recalling $\bz^\star(y) :=(v^\star, u^\star(y))^\top $, $\bz = (v,u)^\top$, $M = P^{-1}$, noting \eqref{def:T3} and using \eqref{id}, we derive
\begin{align}\label{eq:2-opt}
\langle w_n -Kx_n,  u^\star(y)-u_n\rangle &- \langle x_n-v_n, v_n-v^\star\rangle
 =\langle \bz_{n+1}-\bz_n, \bz^\star(y)-\bz_n\rangle_M \nonumber\\
 =&\frac{1}{2}\left(\|\bz_{n+1}-\bz_n\|_M^2+\|\bz_n-\bz^\star(y)\|_M^2 -\|\bz_{n+1}-\bz^\star(y)\|_M^2 \right).
\end{align}
By using \eqref{id} and $x^\star=v^\star$, the third term   in \eqref{eq:key-opt} can be represented  as
\begin{align}\label{eq:3-opt}
- \gamma \langle K(v^\star- v_n), K(x^\star- x_n)\rangle = \frac{\gamma}{2}\big(\|K(x_n-v_n)\|^2 - \|K(v^\star-v_n)\|^2- \|K(x^\star-x_n)\|^2\big).
\end{align}
Again, noting $Kx^\star=w^\star$ and $x^\star = v^\star$, by \eqref{id}, the last term in \eqref{eq:key-opt} becomes
\begin{align}\label{eq:4-opt}
 \gamma  \langle (Kx^\star-w_n), \; &  K(x^\star-x_n) - (Kv_n-w_n)\rangle \nonumber \\
= \; &  \frac{\gamma}{2} \Big( \|K(v^\star-v_n)\|^2+ \|K(x^\star-x_n)\|^2  - \|Kx_n-w_n\|^2 - \| Kv_n-w_n\|^2 \Big).
\end{align}
Substituting \eqref{eq:1-opt}-\eqref{eq:4-opt}  into \eqref{eq:key-opt}, then multiplying both sides by 2 and  rearranging the terms, we obtain
\be\label{eq:final1}
\|\bz_{n+1}-\bz^\star(y)\|_M^2+2 \tau J(x_n,w_n,y)&\leq& \|\bz_{n}-\bz^\star(y)\|_M^2+\|\bz_{n+1}-\bz_n\|_M^2 - \gamma \| Kv_n-w_n\|^2 \nonumber\\
&&- \left( 2\|v_n-x_n\|^2 - \gamma  \|K(x_n-v_n)\|^2 + \gamma \|Kx_n-w_n\|^2 \right).
\ee
From Lemma \ref{lem-a} and \eqref{def:T3}, we have
$\bz_{n+1}-\bz_n = P \begin{pmatrix} \ba{c}x_n-v_n\\w_n-K x_n\ea\end{pmatrix}$.
Since $M=P^{-1}$, we can rewrite
\[\label{eq:final3}
\left(2\|v_n-x_n\|^2 - \gamma  \|K(x_n-v_n)\|^2 + \gamma \|Kx_n-w_n\|^2\right) = \|\bz_{n+1}-\bz_n\|_{\widehat{G}}^2,
\] with
$\widehat{G} :=M^*\diag(2I_{\cH}-\gamma K^*K, \gamma I_{\cG}) M$.
Similarly, we can rewrite
\be
\|Kv_n -w_n\|^2 = \|Kv_n-Kx_n\|^2 +2 \langle Kv_n-Kx_n, Kx_n-w_n \rangle  + \|Kx_n-w_n\|^2 =\|\bz_{n+1}-\bz_n\|_{\widehat{M}}^2, \label{jy-11}
\ee
where $\widehat{M} := M^* E_K M$ with $E_K :=\left[\ba{cc}K^*K~&~ K^*\\  K~&~I_{\cG}\ea\right]$.
Substituting \eqref{eq:final3} and \eqref{jy-11} into \eqref{eq:final1}, we obtain
\be\label{eq:final2}
\|\bz_{n+1}-\bz^\star(y)\|_M^2+2 \tau J(x_n,w_n,y)
\leq \|\bz_{n}-\bz^\star(y)\|_M^2-\|\bz_{n+1}-\bz_n\|_{\widehat{G}-M + \gamma\widehat{M}}^2.
\ee
Finally, from the definitions of  $\widehat{G}$,  $\widehat{M}$, $\Phi_K$ in \eqref{def:Phi} and $Q$ in \eqref{def:Q}, it is elementary to verify that
$\widehat{G}-M+\gamma\widehat{M}=Q$. Thus, \eqref{eq:final2} is identical to the desired result  \eqref{sec43lem}. This completes the proof.
\end{proof}

\end{appendix}

\end{document}